\newcommand\bbR{\ensuremath{\mathbb{R}}} 
\newcommand\bbN{\ensuremath{\mathbb{N}}} 
\newcommand\bbE{\ensuremath{\mathbb{E}}} 
\newcommand\bbP{\ensuremath{\mathbb{P}}} 
\newcommand\bbS{\ensuremath{\mathbb{S}}} 
\newcommand\bbB{\ensuremath{\mathbb{B}}} 
\DeclareMathOperator*{\tr}{tr} 
\DeclareMathOperator*{\var}{Var} 
\newcommand{\ep}{\epsilon} 
\DeclarePairedDelimiter\abs{\lvert}{\rvert}%
\DeclarePairedDelimiterX{\norm}[1]{\lVert}{\rVert}{#1}
\DeclarePairedDelimiter{\brac}{\langle}{\rangle}
\newcounter{relctr}[section] 
\newcommand\labelrel[2]{%
  \begingroup
    \refstepcounter{relctr}%
    \stackrel{\textnormal{(\roman{relctr})}}{\mathstrut{#1}}%
    \originallabel{#2}%
  \endgroup
}
\newtheorem{theorem}{Theorem}
\newtheorem{corollary}{Corollary}
\newtheorem{lemma}{Lemma}
\newtheorem{proposition}{Proposition}
\newtheorem{assumption}{Assumption}
\theoremstyle{definition}
\theoremstyle{remark}
\title{\LARGE \bf
LQR with Tracking: A Zeroth-order  Approach and Its Global Convergence
}
\author{Zhaolin Ren${^*}$, Aoxiao Zhong${^*}$, Na Li${^*
}$ \thanks{Zhaolin Ren, Aoxiao Zhong and Na Li are with the School of Engineering and Applied Sciences at Harvard University. Emails: zhaolinren@g.harvard.edu, aoxiaozhong@fas.harvard.edu, nali@seas.harvard.edu.}%
\thanks{The work is funded by NSF CAREER ECCS-1553407, AFOSR YIP: FA9550-18-1-0150
ONR YIP: N00014-19-1-2217}
}
\begin{document}

\maketitle
\thispagestyle{empty}
\pagestyle{plain}

\begin{abstract}

There has been substantial recent progress on the theoretical understanding of model-free approaches to Linear Quadratic Regulator (LQR) problems. Much attention has been devoted to the special case when the goal is to drive the state close to a zero target. In this work, we consider the general case where the target is allowed to be arbitrary, which we refer to as the LQR tracking problem. We study the optimization landscape of this problem, and show that similar to the zero-target LQR problem, the LQR tracking problem also satisfies gradient dominance and local smoothness properties. This allows us to develop a zeroth-order  policy gradient algorithm that achieves global convergence. We support our arguments with numerical simulations on a linear system. 

\end{abstract}


\section{Introduction}
 Due to their theoretical tractability, many reinforcement learning (RL) works have sought to use Linear Quadratic Regulator (LQR) problems \cite{anderson2007optimal} as a test-bed to better understand performance and sample complexity for both model-based (\cite{DBLP:conf/aistats/Abbasi-YadkoriL19, oymak2019non, dean2019sample}) and model-free algorithms (\cite{DBLP:conf/icml/Fazel0KM18, malik2019derivative, pmlr-v120-li20c}). In this work, we will focus on studying zeroth-order model-free algorithms for an important variant of the LQR problem. To ground our discussion, consider the following instance of LQR. Let $x_t \in \bbR^n$ and $u_t \in \bbR^k$ denote an agent's state and action respectively at time $t$, with the state dynamics evolving as $x_{t+1} = Ax_t + Bu_t.$ The agent's goal is to drive the state to a target location $x^{*}$, which we model via the following discounted infinite-horizon cost:
\begin{align}
    \begin{array}{ll}
     \displaystyle \min_{\substack{u_t, \\ t \in \bbN} }\!\!\!\! & J\!:=\!\displaystyle \bbE\left[\sum_{t=0}^{\infty} \gamma^t \left((x_t\! -\! x^{*})^\top Q(x_t \!-\! x^{*})\!+\! (u_t)^\top R u_t\right)\right]  \\
      \mbox{s.t.} & x_{t+1} = Ax_t + Bu_t, \quad x_0 \sim \mathcal{D},
    \end{array}
    \label{eq:LQR-tracking}
\end{align}
where $0 < \gamma < 1$ is a discount factor. For concreteness we assume that $\mathcal{D} = N(0, \Sigma)$, where $\Sigma$ is a positive-definite matrix. To ease technical analysis, we assume only the initial state $x_0$ is stochastic.

Most existing literature has focused on the special case when the target $x^*$ is zero, where it is known that the optimal policy is time-invariant and linear in the state $x$, of the form $u_t = Kx_t$. In pioneering work in \cite{DBLP:conf/icml/Fazel0KM18}, the authors clarified the optimization landscape of the LQR problem, and demonstrated that the vanilla LQR ($x^{*} = 0$) enjoys the \emph{gradient dominance} property, known also as the Polyak-Łojasiewicz inequality (\cite{polyak1963gradient, Lojasiewicz1963gradient}), as well as local smoothness, two crucial conditions to achieve global convergence for policy gradient. These results paved the way for the sample complexity analysis of zeroth-order policy gradient methods in both \cite{DBLP:conf/icml/Fazel0KM18} and \cite{malik2019derivative}. 

To broaden the application, we may consider allowing $x^*$ to be non-zero, as this provides greater modelling flexibility. For instance, when using LQR in robotic navigation problems or building thermal control problems, the target point is often nonzero. To distinguish this from the zero target LQR problem, we call this the \emph{LQR tracking} problem. In this setting, the optimal policy is also time-invariant, which takes the affine form $u_t = Kx_t + \tilde{K}x^{*}$, where we note that there is an additional matrix $\tilde{K}$ in the policy structure. However, to the best of our knowledge, the optimization landscape of the LQR tracking problem is still unknown. In particular, due to the change in the policy structure of the optimal policy, it is unclear if gradient dominance and local smoothness still hold for the LQR tracking problem. In light of this, analysis of model-free policy gradient methods for the LQR tracking problem has also not been developed. 

\vspace{-5pt}
\subsection{Our contributions}

First, we study the LQR tracking problem~\eqref{eq:LQR-tracking} and establish its gradient dominance property. Second, we also show that it satisfies local smoothness, which we consider in this work to be equivalent to local Lipschitzness of both the LQR tracking cost function and its gradient. 
To the best of our knowledge, this paper is the first to establish the gradient dominance and local smoothness properties for the more general LQR tracking problem, when an additional $\tilde{K}x^{*}$ is present in an agent's optimal policy and the policy is in the form of $u_t=Kx_t+\tilde{K}x^*$. These properties play a key role in showing that it is possible for policy gradient on the LQR tracking problem to achieve a globally optimal solution. Third, we propose and study the sample complexity of a model-free zeroth-order policy gradient algorithm for the LQR tracking problem, showing that it takes $\tilde{O}(\frac{d}{\ep})$ steps to reach an optimality gap of $\ep$. Here $d=2nk$ is the dimension of the policy $[K, \tilde{K}]$. The result is consistent with the sample complexity result of two-point estimators for the vanilla LQR problem in \cite{malik2019derivative} whose dimension is only $d=nk$ since the policy is only parameterized by $K$. Finally, we provide simulation results to demonstrate the convergence of our algorithm. The appendices contain all relevant additional materials not included in the conference version of this work. 
\vspace{-5pt}
\subsection{Related work}

First, as our setting is model-free, gradient estimators in our algorithms are based on zeroth-order, i.e. function value information, situating our work in the zeroth-order optimization literature  (\cite{pmlr-v120-li20c, nesterov2017random,shamir2017optimal, hajinezhad2019zone, sahu2018distributed, tang2019distributed}). Second, as discussed earlier, our work reposes on the LQR RL literature. The closest ones are (\cite{DBLP:conf/icml/Fazel0KM18, malik2019derivative}) that studies policy gradient for the vanilla LQR problem with zero target.  We also note that another line of LQR work focuses on online learning of LQR subject to time-varying disturbances or cost functions (\cite{agarwal2019logarithmic, DBLP:conf/colt/SimchowitzSH20,li2019online, li2020online}). Our work focuses on time-invariant problems but with unknown system dynamics. 

\vspace{-5pt}
\subsection{Notations}

Unless specified, $\norm{\cdot}$ refers to the Euclidean norm for vectors and Frobenius norm for matrices. In addition, for any algorithm, $\mathcal{F}_t$ denotes its filtration up to time $t$ and  $\bbE^t := \bbE[(\cdot) \mid \mathcal{F}_t]$ denotes the conditional expectation on $\mathcal{F}_t$. When appropriate, for a vector $v$, we use $(v)_K$ to refer to its subvector corresponding to the $K$ matrix and $(v)_{\tilde{K}}$ to refer to its subvector corresponding to the $\tilde{K}$ matrix. For a positive integer $m$, $[m]$ refers to the set $\{1,2,\dots,m\}$. Depending on the context, 0 could denote either the scalar 0 or the zero vector of the appropriate dimension. We also let $\rho(\cdot)$ denote the spectral radius of a matrix.

\section{Problem Setup}


For the LQR tracking problem (\ref{eq:LQR-tracking}), it can be shown that the optimal policy for the infinite-horizon LQR tracking problem takes the time-invariant form $u_t = K x_t + \tilde{K}x^*$ for some $K, \tilde{K}$ (see e.g. \cite{anderson2007optimal}). For completeness and for later use in proving properties of this problem, we provide a formal statement describing the optimal controller and optimal value (Proposition~\ref{proposition:optimal controller for J}) in the appendix. This allows us to rewrite the LQR tracking problem as follows:
\begin{align}
    \begin{array}{ll}
     \displaystyle \min_{\substack{K, \tilde{K}} }\!\!\!\! & J\!:=\!\displaystyle \bbE\left[\sum_{t=0}^{\infty} \gamma^t \left((x_t\! -\! x^{*})^\top Q(x_t \!-\! x^{*})\!+\! (u_t)^\top R u_t\right)\right]  \\
      \mbox{s.t.} & x_{t+1} = Ax_t + Bu_t, \\
      & u_t = Kx_t + \tilde{K}x^*, \quad x_0 \sim \mathcal{D}, \quad \mathcal{D} = N(0,\Sigma).
    \end{array}
    \label{eq:LQR-tracking-K-Ktilde}
\end{align}

During our learning process, we assume that matrices $A$ and $B$ are unknown but $x^*$ is available and both $x_t$ and the stage cost is observable at each time $t$.  This information setting is common in many RL applications \cite{recht2019tour}. Given a policy $(K,\tilde{K})$, and a randomly drawn initial state $x_0$, this allows us to measure the cost $J(K, \tilde{K}; x_0)$, which is a noisy realization of the (expected) local cost $J(K, \tilde{K})$. Throughout our analysis, we will also impose the following assumptions.

\vspace{-5pt}
\begin{assumption}
\label{assumption:controllable}
The dynamical system governed by the matrices $(A,B)$ is controllable \cite{anderson2007optimal}. 
\end{assumption}
\vspace{-6pt}
Thus, there exists $K \in \bbR^{k \times n}$ so that $\sqrt{\gamma}\rho(A+BK) < 1$, where $\rho(\cdot)$ measures the spectral radius of a matrix. 

\begin{assumption}
\label{assumption:access to stable K0}
We have access to a stable controller $K_0$ such that $\sqrt{\gamma}\rho(A + BK_0) < 1$.
\end{assumption}
\vspace{-6pt}
 This is an assumption commonly made in the LQR literature (\cite{dean2019sample,DBLP:conf/icml/Fazel0KM18,malik2019derivative}).

\begin{assumption}
\label{assumption: Q,R positive definite, full rank Sigma}
Cost matrices $Q$, $R$ and the covariance $\Sigma$ for the initial state are all positive definite.
\end{assumption}
\section{Optimization landscape of LQR tracking problem}
\vspace{-3pt}
We first state a result showing that a global minimizer exists (see e.g. \cite{anderson2007optimal} for similar results; we provide a tailored proof for our setup in the proof of Proposition \ref{proposition:optimal controller for J} in the appendix.)
\vspace{-5pt}
\begin{lemma}[Existence of global minimizer]
\label{lemma: existence of global minimizers}
There exists $K^{\mathrm{opt}},\tilde{K}^{\mathrm{opt}} \geq 0$ such that
\[\min_{K,\tilde{K}} J(K,\tilde{K}) = J(K^{\mathrm{opt}},\tilde{K}^{\mathrm{opt}}) := J^{\mathrm{opt}}. \]
\end{lemma}
The optimal $K^{\mathrm{opt}}, \tilde{K}^{\mathrm{opt}}$ and the minimum value $J^{\mathrm{opt}}$ are provided in Proposition~\ref{proposition:optimal controller for J} in the appendix.
\begin{lemma}[Non-convexity]
\label{lemma: non-convexity of J}
If $n \geq 2$, the LQR tracking problem~\eqref{eq:LQR-tracking-K-Ktilde} is in general non-convex.
\end{lemma}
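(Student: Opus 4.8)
The plan is to regard the tracking cost as an extended-real-valued map $J(K,g)\in\bbR\cup\{+\infty\}$, finite exactly on the set of controllers for which the discounted sum converges, and to exhibit three collinear controllers: two endpoints with finite cost whose midpoint has infinite cost. Convexity would force $J(\tfrac{K_1+K_2}{2},g)\le \tfrac12(J(K_1,g)+J(K_2,g))<\infty$, so such a configuration is impossible for a convex function and the claim follows. The first step is therefore to pin down when $J$ is finite. Applying the standard reduction of the $\gamma$-discounted LQR to an undiscounted LQR with system matrices $(\sqrt{\gamma}A,\sqrt{\gamma}B)$ (substitute $y_t=\gamma^{t/2}x_t$), the closed loop becomes $\sqrt{\gamma}(A+BK)$, so $J(K,g)<\infty$ if and only if $\rho(A+BK)<1/\sqrt{\gamma}$ and $J(K,g)=+\infty$ otherwise; note $1/\sqrt{\gamma}>1$ since $\gamma\in(0,1)$.

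Next I would construct the example, which needs only the $K$-variable, so I fix $g=0$ and any target $x^{*}$. Take $n=2$, $A=B=I_2$, so that $A+BK=I_2+K$, and set
\[
K_1=\begin{bmatrix}-0.1 & b\\ 0 & -0.1\end{bmatrix},\qquad K_2=\begin{bmatrix}-0.1 & 0\\ c & -0.1\end{bmatrix},
\]
with $b,c>0$ to be chosen. Each $I_2+K_i$ is triangular with the double eigenvalue $0.9$, hence $\rho(A+BK_i)=0.9<1<1/\sqrt{\gamma}$ and both endpoint costs are finite. Their midpoint gives $I_2+\tfrac12(K_1+K_2)=\begin{bmatrix}0.9 & b/2\\ c/2 & 0.9\end{bmatrix}$, whose eigenvalues are $0.9\pm\tfrac12\sqrt{bc}$, so $\rho=0.9+\tfrac12\sqrt{bc}$. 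Choosing $b,c$ with $\tfrac12\sqrt{bc}>1/\sqrt{\gamma}-0.9$ (possible for any fixed $\gamma$) makes the midpoint spectral radius strictly exceed $1/\sqrt{\gamma}$.

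To finish, I would verify that the midpoint cost is genuinely $+\infty$ rather than merely large. Writing $M:=A+B\cdot\tfrac12(K_1+K_2)$ and using $x_0\sim N(0,\Sigma)$ with $g=0$ (so $\bbE[x_t]=0$ and $\bbE\norm{\tilde x_t}^2\ge \bbE\norm{x_t}^2$), one has $\bbE\norm{x_t}^2\ge \tr(M^t\Sigma (M^t)^\top)\ge \lambda_{\min}(\Sigma)\,\norm{M^t}^2\ge \lambda_{\min}(\Sigma)\,\rho(M)^{2t}$, where the last step uses $\norm{M^t}\ge\rho(M^t)=\rho(M)^t$. Since $Q\succ0$ (Assumption~\ref{assumption: full rank Sigma}) the stage cost dominates $\lambda_{\min}(Q)\norm{\tilde x_t}^2$, so the discounted sum is bounded below by a constant multiple of $\sum_t(\gamma\,\rho(M)^2)^t=+\infty$ because $\gamma\,\rho(M)^2>1$. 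Hence $J$ is finite at $(K_1,0)$ and $(K_2,0)$ but infinite at their midpoint, contradicting convexity; this also recovers the classical LQR non-convexity when $x^{*}=0$. The only delicate points, and the ones I would treat most carefully, are (i) the equivalence giving the exact finiteness threshold $1/\sqrt{\gamma}$, and (ii) the lower bound showing the cost truly diverges, which is where positive definiteness of $Q$ and $\Sigma$ is essential, as is $n\ge2$ for the triangular (Jordan-block) construction to apply; for $n=1$ the sublevel set $\{K:\rho(A+BK)<1/\sqrt{\gamma}\}$ is an interval and this obstruction disappears.
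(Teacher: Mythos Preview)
Your proposal is correct and follows essentially the same route as the paper: both arguments fix $g=0$ and exhibit two triangular controllers $K_1,K_2$ (each stabilizing, with spectral radius below the relevant threshold) whose midpoint has spectral radius exceeding it, so the midpoint cost diverges. Your treatment is in fact more careful than the paper's appendix proof---you track the discounted threshold $1/\sqrt{\gamma}$ explicitly and verify divergence of the cost using $Q,\Sigma\succ 0$---whereas the paper simply writes down concrete matrices with large off-diagonal entries and checks spectral radii directly.
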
 
\vspace{-6pt}
This is a consequence of the fact that when $n \geq 2$, there exist matrices $K$ and $K'$ such that $J(K,0)$ and $J(K',0)$ are both finite but $J((K + K')/2,0)$ is not finite. An example is provided in Appendix \ref{appendix:non-convexity}. Without additional assumptions, gradient descent on a non-convex problem can only reach a stationary point, not necessarily a global minimum. However, it is known that under gradient dominance and appropriate smoothness properties, gradient descent methods, even for a non-convex problem, can find a global minimizer at a rate comparable to that of strongly convex functions \cite{karimi2016linear}. In \cite{DBLP:conf/icml/Fazel0KM18}, the authors established the gradient dominance and local smoothness of the vanilla LQR problem (where the target $x^* = 0$), when the optimization variable only involves $K$. We establish that both properties hold in the more general tracking case when the policy involves $K$ as well as $\tilde{K}$.
\begin{proposition}[Gradient dominance of LQR tracking problem]
\label{proposition:gradient dominance of lqr tracking}
There exists $\mu > 0$, depending only on $(A,B,Q,R,\Sigma,\gamma,x^*)$, such that 
\begin{align*}
   J(K,\tilde{K}) - J^{\mathrm{opt}} \leq \frac{1}{\mu}\norm*{\nabla J(K,\tilde{K})}^2.
\end{align*}
\end{proposition}
While building on the techniques in \cite{DBLP:conf/icml/Fazel0KM18}, the addition of a constant term $\tilde{K}x^{*}$ increases the complexity of the problem, and requires new analysis. We provide a proof sketch below, deferring omitted parts of the proof to appendix \ref{appendix:gradient-dominance}. 

\begin{proof}[Proof sketch of Proposition \ref{proposition:gradient dominance of lqr tracking}]
Denoting
$$c(x_t,u_t) :=  (x_t - x^*)^\top Q (x_t - x^*) + u_t^\top R u_t,$$
define the following cost-to-go function $V_{K,\tilde{K}}: \bbR^n \to \bbR_{\geq 0}$:
{\small
\begin{align*}
    & V_{K,\tilde{K}} (z) =  \sum_{t = 0}^{\infty} \gamma^t c(x_t, Kx_t+ \tilde{K}x^*), \quad \mbox{where } x_0 = z \in \bbR^n.
\end{align*}}
Supposing that $\sqrt{\gamma} \rho((A + BK))< 1$, it follows from Lemma \ref{lemma:value function of J(K,g)} in the appendix that 
\begin{align*}
    V_{K,\tilde{K}}(z) = z^\top P_K z + 2z^\top (q_{K,\tilde{K}}x^*) + r_{K,\tilde{K}}, 
\end{align*}
for some $P_K, q_{K,\tilde{K}}$ and $r_{K,\tilde{K}}$. Defining the quantities
\footnotesize
$$C_{K}\! = \!RK \!+\gamma B^\top P_K(A+BK),  d_{K,\tilde{K}} \!= \!R\tilde{K}\! +\gamma B^\top P_KB\tilde{K}\! +\gamma B^\top q_{K,\tilde{K}}\!,$$
\normalsize
we show in Lemma \ref{lemma:gradient of J} in the appendix that the gradient of $J$ can be expressed in the form
{\small
\begin{align}
    &\nabla_K J  = 2C_K \Sigma_{K,\tilde{K}} + 2d_{K,\tilde{K}}x^* \rho_{K,\tilde{K}}^\top, \label{eq:gradient_K}\\
    &\nabla_{\tilde{K} } J = \left(2C_K \rho_{K,\tilde{K}} + 2\beta_{\gamma}d_{K,\tilde{K}}x^* \right)(x^*)^\top, \label{eq:gradient_Ktilde}
\end{align}
}
where
\small
\begin{align*}
    \Sigma_{K,\tilde{K}} := \left[\sum_{t=0}^{\infty} \gamma^t x_t x_t^\top\right], \ \rho_{K,\tilde{K}} := \mathbb{E}\left[\sum_{t=0}^{\infty} \gamma^t x_t^\top \right], \ \beta_{\gamma} := \sum_{t=0}^{\infty} \gamma^t,
\end{align*}
\normalsize
and $\{x_t\}$ is the trajectory corresponding to the policy $(K,\tilde{K})$. Via a cost-difference lemma (see e.g. \cite{kakade2002approximately}), we show in Lemma \ref{lemma:cost-difference} in the appendix that
\small
\begin{align}
    J(K,\tilde{K}) - J^{\mathrm{opt}} \leq \frac{\Psi^{\mathrm{opt}}}{\sigma_{\min}(R)}\left(\norm{C_K}^2 +
    \norm*{d_{K,\tilde{K}} x^*}^2 \right)
    \label{eq:gradient_dom_opt_gap_bound}
\end{align}
\normalsize
for some $\Psi^{\mathrm{opt}}$ depending only on $(A,B,Q,R,\Sigma,\gamma,x^*)$. We now seek to bound the RHS of \eqref{eq:gradient_dom_opt_gap_bound} in terms of $\norm*{\nabla J(K,\tilde{K})}^2$. To do so, using \eqref{eq:gradient_K} and \eqref{eq:gradient_Ktilde}, we can re-express the gradient as follows:
{\small
\begin{align*}
    \nabla J(K, \tilde{K}) &= 2 \begin{bmatrix}
C_K & d_{K,\tilde{K}}
\end{bmatrix}  M_{K,\tilde{K}},
\end{align*}
}
where 
{\small
\begin{align*}
M_{K,\tilde{K}} := 
\begin{bmatrix}
\Sigma_{K,\tilde{K}} &  \rho_{K,\tilde{K}}(x^*)^\top \\
 x^*(\rho_{K,\tilde{K}})^\top & \beta_{\gamma}  x^* (x^*)^\top
\end{bmatrix}.
\end{align*}
}
Letting $A_K := A+BK$ to ease the notation, note that
{\small
\begin{align*}
    M_{K,\tilde{K}} &= \begin{bmatrix}
    \Sigma_{K,\tilde{K}} & \rho_{K,\tilde{K}}(x^*)^\top \\
    x^*(\rho_{K,\tilde{K}})^\top & \beta_{\gamma} x^* (x^*)^\top
    \end{bmatrix} \\
    &= \sum_{t=0}^{\infty} \gamma^t 
    \begin{bmatrix}
    \mathbb{E}[x_t (x_t)^\top] & \mathbb{E}[x_t](x^*)^\top \\
    x^* \mathbb{E}[x_t]^\top & x^* (x^*)^\top
    \end{bmatrix} \\
    &= \sum_{t=0}^{\infty} \gamma^t 
    \begin{bmatrix}
    \mathbb{E}[x_t]\mathbb{E}[x_t]^\top + \mathbb{E}[A_K^t x_0 x_0^\top (A_K^t)^\top] & \mathbb{E}[x_t](x^*)^\top \\
    x^* \mathbb{E}[x_t]^\top & x^* (x^*)^\top
    \end{bmatrix} \\
    &\labelrel\succeq{eq:M_K_Ktilde_penultimate-main}
    \begin{bmatrix}
    \Sigma & 0_{n \times n} \\
    0_{n \times n} & x^* (x^*)^\top
    \end{bmatrix} \labelrel\succeq{eq:M_K_Ktilde_last-main}
    \begin{bmatrix}
    \alpha I_{n \times n} & 0_{n \times n} \\
    0_{n \times n} & x^* (x^*)^\top
    \end{bmatrix}.
\end{align*}
}
To obtain \eqref{eq:M_K_Ktilde_penultimate-main}, we used the fact that $\bbE\left[x_0 x_0^\top \right] = \Sigma$, $\bbE[x_0] = 0$, as well as the fact that for each $t$,
{\small
\begin{align*}
    &\begin{bmatrix}
    \mathbb{E}[x_t]\mathbb{E}[x_t]^\top + \mathbb{E}[A_K^t x_0 x_0^\top (A_K^t)^\top] & \mathbb{E}[x_t](x^*)^\top \\
    x^* \mathbb{E}[x_t]^\top & x^* (x^*)^\top
    \end{bmatrix} \\
    &= \! \begin{bmatrix}
    \mathbb{E}[x_t] \\
    x^*
    \end{bmatrix} \begin{bmatrix}
    \mathbb{E}[x_t]^\top & (x^*)^\top 
    \end{bmatrix} \! + \! \begin{bmatrix}
    \mathbb{E}[A_K^t x_0 x_0^\top (A_K^t)^\top] & 0_{n \times n} \\
    0_{n \times n} & 0_{n \times n}
    \end{bmatrix} \succeq 0.
\end{align*}
}
To obtain \eqref{eq:M_K_Ktilde_last-main}, we used the positive definite assumption on $\Sigma$, ensuring that $\Sigma \succeq \alpha I$  for some $\alpha > 0$.
{\small
Hence,
\begin{align*}
\norm*{\nabla J(K, \tilde{K})}^2 &= 4 \norm*{\begin{bmatrix}
C_K & d_{K,\tilde{K}}
\end{bmatrix}  M_{K,\tilde{K}}}^2 \\
&\geq 4 \norm*{\begin{bmatrix}
C_K & d_{K,\tilde{K}}
\end{bmatrix}  \begin{bmatrix}
    \alpha I_{n \times n} & 0_{n \times n}\\
    0_{n \times n} & x^* (x^*)^\top
    \end{bmatrix}}^2 \\
&= 4 \left(\alpha^2 \norm*{C_K}^2 + \norm{x^*}^2 \norm*{d_{K,\tilde{K}}x^*}^2\right).
\end{align*}
}
Suppose first that $x^* \neq 0$. Then,
\small
\begin{align*}
    &J(K, \tilde{K}) - J(K^{\mathrm{opt}}, \tilde{K}^{\mathrm{opt}}) \\
    &\labelrel\leq{eq:gradient_dominant_cite_opt_gap-main} \frac{\Psi^{\mathrm{opt}}}{\sigma_{\min}(R)} \left(\norm*{C_K}^2 + \norm*{d_{K,\tilde{K}}x^*}^2 \right) \\
    &\leq \frac{\Psi^{\mathrm{opt}}}{4\min\{\alpha^2, \norm*{x^*}^2\} \sigma_{\min}(R)} \norm*{\nabla J(K, \tilde{K})}^2,
\end{align*}
\normalsize
where we note that \eqref{eq:gradient_dominant_cite_opt_gap-main} follows from   \eqref{eq:gradient_dom_opt_gap_bound}.
In the case when $x^* = 0$,
\begin{align*}
    J(K, \tilde{K}) - J(K^{\mathrm{opt}}, \tilde{K}^{\mathrm{opt}}) &\leq \frac{\Psi^{\mathrm{opt}}}{\sigma_{\min}(R)} \left(\norm*{C_K}^2 \right) \\
    &\leq \frac{\Psi^{\mathrm{opt}}}{4\alpha^2 \sigma_{\min}(R)} \norm*{\nabla J(K, \tilde{K})}^2.
\end{align*}
This completes the proof sketch.
\end{proof}

Gradient dominance alone cannot ensure convergence to a global minimum. As noted in (\cite{DBLP:conf/icml/Fazel0KM18, malik2019derivative}), a function needs to also exhibit local smoothness properties. We note that in showing the Lipschitz result for the sample cost below, we make the simplifying assumption that the initial distribution of $x_0$ is bounded; an extension to sub-Gaussian random distribution is possible by appealing to high-probability bounds and standard truncation arguments, as suggested in \cite{malik2019derivative}. Thus, in the sequel, we will work with the assumption $\norm{x_0} \leq C_n$ for all $x_0 \sim \mathcal{D}$ for some $C_n > 0$.

\begin{proposition}[Local smoothness of LQR tracking problem]
\label{proposition: local smoothness}
Let $\mathcal{G}_C = \{(K,\tilde{K}) \mid J(K,\tilde{K}) \leq C \}$ be a sublevel set of $J$, where $C > 0$. Suppose $\norm{x_0} \leq C_n$ for all $x_0 \sim \mathcal{D}$ for some $C_n > 0$. Then, there exists a local radius $\rho > 0$, Lipschitz parameter $\lambda > 0$ and Lipschitz gradient parameter $L > 0$, which all depend only on $(A,B,Q,R,\Sigma, \gamma,x^*,C,C_n)$, such that for any $(K,\tilde{K})$ in $\mathcal{G}_C$, whenever $\norm{(K',\tilde{K}') - (K,\tilde{K})} \leq \rho$, 
{\small
\begin{align*}
     &\abs*{J(K',\tilde{K}') - J(K,\tilde{K})} \leq \lambda \norm*{(K',\tilde{K}') - (K,\tilde{K})}, \\
     &\abs*{J(K',\tilde{K}';x_0) - J(K,\tilde{K};x_0)} \leq \lambda \norm*{(K',\tilde{K}') - (K,\tilde{K})} \quad \forall x_0 \sim \mathcal{D}, \\
    &\norm*{\nabla\! J(K',\tilde{K}')\! -\! \nabla J(K,\tilde{K})}\! \leq \!L\norm*{(K',\tilde{K}')\!- \!(K,\tilde{K})}. 
\end{align*}
}
\end{proposition}

\begin{proof}[Proof sketch for Proposition \ref{proposition: local smoothness}]
 Suppose $(K,\tilde{K})$ is in a sublevel set $\mathcal{G}_C$ of $J$ for some $C > 0$, and let $(K',\tilde{K}')$ denote another policy.
For notational convenience, define
\footnotesize
\begin{align*}
   &\Psi_{(\!K,\tilde{K}\!),(\!K',\tilde{K}'\!)} := \norm*{R \!+\! \gamma B^\top P_K B}\!\left(\max\!\left\{\!\norm*{\Sigma_{K',\tilde{K}'}}\!,\! \norm*{\rho_{K',\tilde{K}'}}\!,\! \norm*{x^*}^2 \!\right\}\right).
\end{align*}
\normalsize
Then, applying the cost-difference lemma \cite{kakade2002approximately}, we can show
\small
\begin{align*}
    &J(K',\tilde{K}') - J(K,\tilde{K}) \\
    &= 2 \tr((K'-K)^\top (C_K \underline{\bm{\Sigma_{K',\tilde{K}'}}}  + d_{K,\tilde{K}}x^* \underline{\bm{\rho_{K',\tilde{K}'}}}^\top)) \\
    & \ \ + 2\tr((\tilde{K}' - \tilde{K})^\top (C_K \underline{\bm{\rho_{K',\tilde{K}'}}} + d_{K,\tilde{K}}x^*)(x^*)^\top) \\
    & \ \ + O\left( \Psi_{(K,\tilde{K}),(K',\tilde{K}')} \left(\norm*{K' - K}^2 + \norm*{\tilde{K}' - \tilde{K}}^2 \right)\right) \\
    &\labelrel\approx{eq:smoothness_approx-main} 2 \tr((K'-K)^\top (C_K \underline{\bm{\Sigma_{K,\tilde{K}}}}  + d_{K,\tilde{K}}x^* \underline{\bm{\rho_{K,\tilde{K}}}}^\top)) \\
    & \ \ + 2\tr((\tilde{K}' - \tilde{K})^\top (C_K \underline{\bm{\rho_{K,\tilde{K}}}} + d_{K,\tilde{K}}x^*)(x^*)^\top) \\
    & \ \ + O\left( \Psi_{(K,\tilde{K}),(K',\tilde{K}')} \left(\norm*{K' - K}^2 + \norm*{\tilde{K}' - \tilde{K}}^2 \right)\right) \\
    &\labelrel={eq:gradient_equality-main} \tr((K'-K)^\top \nabla_K J(K,\tilde{K}) \\
    & \ \ + \tr((\tilde{K}' - \tilde{K})^\top \nabla_{\tilde{K}} J(K,\tilde{K}) \\
    & \ \ + O\left( \Psi_{(K,\tilde{K}),(K',\tilde{K}')} \left(\norm*{K' - K}^2 + \norm*{\tilde{K}' - \tilde{K}}^2 \right)\right) \\
    &\labelrel={eq:smoothness_simplify_big_O-main}\tr((K'-K)^\top \nabla_K J(K,\tilde{K}) \\
    & \ \ + \tr((\tilde{K}' - \tilde{K})^\top \nabla_{\tilde{K}} J(K,\tilde{K}) \\
    & \ \ + O\left( \Psi_C \left(\norm*{K' - K}^2 + \norm*{\tilde{K}' - \tilde{K}}^2 \right)\right)
\end{align*}
\normalsize
Above, we bolded and underlined the terms that are different before and after the approximation in \eqref{eq:smoothness_approx-main}. By constraining $\norm*{K' - K}$ and $\norm*{\tilde{K}' - \tilde{K}}$ to both be sufficiently small, it is possible to prove perturbation bounds of the form
\small
\begin{align}
    &\norm*{\Sigma_{K',\tilde{K}'} -  \Sigma_{K,\tilde{K}}} \leq c \left(\norm*{K' - K} + \norm*{\tilde{K}' - \tilde{K}} \right), \label{eq:perturbation_bd_1}\\ 
    &\norm*{\rho_{K',\tilde{K}'} - \rho_{K,\tilde{K}}} \leq c' \left(\norm*{K' - K} + \norm*{\tilde{K}' - \tilde{K}} \right) \label{eq:perturbation_bd_2}
\end{align}
\normalsize
for some positive constants $c,c'$ depending only on $(A,B,Q,R,\Sigma, \gamma,x^*,C)$. This shows that the approximation in \eqref{eq:smoothness_approx-main} is locally valid. Meanwhile, the equality in \eqref{eq:gradient_equality-main} holds by the form of the gradient of $J$ we compute in Lemma \ref{lemma:gradient of J}. Finally, for the equality in \eqref{eq:smoothness_simplify_big_O-main}, we replaced the constant $\Psi_{(K,\tilde{K}),(K',\tilde{K}')}$ with a constant $\Psi_C$, which can be shown to depend only on the sublevel set parameter $C$ as well as the system parameters $(A,B,Q,R,\Sigma,\gamma,x^*)$. To see why, recall that
\footnotesize
\begin{align*}
   &\Psi_{(\!K,\tilde{K}\!),(\!K',\tilde{K}'\!)} := \norm*{R \!+\! \gamma B^\top P_K B}\!\left(\max\!\left\{\!\norm*{\Sigma_{K',\tilde{K}'}}\!,\! \norm*{\rho_{K',\tilde{K}'}}\!,\! \norm*{x^*}^2 \!\right\}\right).
\end{align*}
\normalsize
Since $(K,\tilde{K})$ lies within the sublevel set $\mathcal{G}_C$, it is possible to uniformly bound the norm of $P_K$, $\Sigma_{K,\tilde{K}}$, as well as $\rho_{K,\tilde{K}}$, over the set $\mathcal{G}_C$. Coupled with perturbation bounds such as the ones in \eqref{eq:perturbation_bd_1} and \eqref{eq:perturbation_bd_2}, it is not hard to see that $\Psi_{(K,\tilde{K}),(K',\tilde{K}')}$ can be replaced with a constant $\Psi_C$, which depends only on the sublevel set parameter $C$ as well as the system parameters $(A,B,Q,R,\Sigma,\gamma,x^*)$. From \eqref{eq:smoothness_simplify_big_O-main}, we see that for any $(K,\tilde{K})$ in $\mathcal{G}_C$, the cost difference between it and a sufficiently close policy $(K',\tilde{K}')$ can be written as an inner product with the gradient plus some quadratic terms. By deriving norm bounds on $\nabla J(K,\tilde{K})$ uniform over any $(K,\tilde{K})$ over $\mathcal{G}_C$, it follows that $J$ is locally Lipschitz within a radius $\rho$ and Lipschitz parameter $\lambda$ that hold uniformly over $\mathcal{G}_C$. We can prove the local Lipschitzness of the sample cost and the gradient of $J$ with a similar recipe: prove a variety of norm bounds over terms such as $P_K$, $\Sigma_{K,\tilde{K}}, \rho_{K,\tilde{K}}$ uniformly over $\mathcal{G}_C$, and show perturbation norm bounds on difference terms between quantities that depend on $(K',\tilde{K}')$ and quantities that depend on $(K,\tilde{K})$ such as the following bound:
\begin{align*}
    &\norm*{\Sigma_{K',\tilde{K}'} -  \Sigma_{K,\tilde{K}}} \leq c \left(\norm*{K' - K} + \norm*{\tilde{K}' - \tilde{K}} \right).
\end{align*}
The full proof can be found in appendix \ref{appendix:local-smoothness}.
\end{proof}


\section{Algorithm design}

\subsection{Background on zeroth-order model-free LQR learning}
As discussed earlier, the optimal policy to minimize the  infinite-horizon cost \eqref{eq:LQR-tracking} is given by $u_t = Kx_t + \tilde{K}x^*$, i.e. an affine time-invariant policy. Denoting $\hat{K} := (K,\tilde{K})$ to ease the notation, policy gradient on $\hat{K}$ then involves iterating the update $\hat{K}_{t+1} = \hat{K}_t - \eta z_t$, where $z_t$ is an estimate of the policy gradient $\nabla J (\hat{K}_t)$, and $\eta > 0$ is a step-size. Exactly computing the gradient $\nabla J(\hat{K}_t)$ requires knowledge of the system matrices $(A,B)$. When $(A,B)$ is unknown,  (\cite{DBLP:conf/icml/Fazel0KM18, malik2019derivative}) have proposed a zeroth-order mechanism to estimate the gradient for the vanilla LQR problem with just $K$. In this line of work, the estimate $z_t$ is calculated based on function value information about the cost $J(\cdot)$ near the current policy $K_t$. We adapt existing zeroth-order estimators proposed in the literature to our setup, where the policy now includes not just the matrix {\small$K$} but also {\small$\tilde{K}$}. 
In particular, we consider the following symmetric two-point estimator:
\small
\begin{align*}
  z_r(\hat{K};x_0, \delta)\!:=\! d\frac{J(\hat{K} + r\delta; x_0)\! -\! J(\hat{K} - r\delta;x_0)}{2r} \delta,  \delta \sim \mathrm{Unif}(\bbS^{d - 1}).
\end{align*}
\normalsize
Above, $r > 0$ can be viewed as a smoothing radius, and $\delta$ is a random perturbation vector drawn at random from the unit sphere $\mathrm{Unif}(\bbS^{d-1})$, where we denote $d := 2nk$, which is the dimension of $\hat{K} = (K,\tilde{K})$. The multiplicative factor $d$ is necessary so that the estimator has the same scaling as the true gradient $\nabla J(\hat{K})$. The symmetric two-point estimator resembles a finite-difference approximation of the gradient, and as such is clearly motivated. We point out here a technical caveat --- the notation $J(\hat{K} \pm r\delta; x_0)$ means that the two measurements are based on the policies $\hat{K} \pm r\delta$ for a common random initial state $x_0$. In cases when we cannot obtain two neighboring function evaluations corresponding to the same noisy initial state $x_0$, a one-point estimator based on a single function evaluation may be required (\cite{pmlr-v120-li20c, flaxman2004online})\footnote{In this work, we focus on studying the two-point estimator. Analysis of the one-point estimator setting will be left to future work.}.

\vspace{-5pt}
\subsection{Proposed algorithm: zeroth-order LQR tracking}
Our goal is to approximately optimize $J$ using zeroth-order information to form policy gradient updates. We consider incorporating a mini-batching procedure to reduce the variance of the gradient estimators. We state the algorithm below, and highlight its key features.

\SetInd{0.1em}{0.01em}
\begin{algorithm}
\caption{Zeroth-order LQR tracking algorithm}
\label{alg:zeroth-order lqr}
\DontPrintSemicolon
\SetAlgoNoLine
Given: iteration number $T \geq 1$, mini-batch size $m \geq 1$, initial stable $K_0$, initial $\tilde{K}_0$, step size $\eta > 0$, and smoothing radius $r  > 0$. Denote $d := 2nk$. \;
\For{$\mbox{iteration }t = 0,1\!,\! \dots\!,\! T-1$}{
Sample \small$\{(x_0)_t^i\}_{i=1}^m \!\sim\!N(0,\Sigma), \ \  \{\delta_t^i\}_{i=1}^m\!\sim\!\mathrm{Unif}(\bbS^{d - 1}).$\normalsize \;
Compute zeroth-order gradient estimator $z_t^i$ for each $i \in [m]$:
\begin{align*}
z_t^i\!\leftarrow\! z_r((K_t,\tilde{K}_t); (x_0)_t^i, \delta_t^i)
\end{align*}\;
\vspace{-8pt}
Set $z_t \leftarrow \frac{1}{m} \sum_{i=1}^m z_t^i$. Update $K_t, \tilde{K}_t$: 
\begin{align*}
&K_{t+1} \leftarrow K_t - \eta (z_t)_K, \\ 
&\tilde{K}_{t+1} \leftarrow \tilde{K}_t - \eta (z_t)_{\tilde{K}},
\end{align*}
where $(z_t)_K, (z_t)_{\tilde{K}}$ mean the gradient estimator value for $K$ and $\tilde{K}$ respectively.
}
\Return{$K_T, \ \tilde{K}_T$}
\end{algorithm} 
\begin{itemize}[leftmargin=12pt]
\itemsep=3pt
\item \textit{Initial conditions (Line 1).} We assume knowledge of a stable $K_0 \in \bbR^{k \times n}$. The choice of initial constant 
term $\tilde{K}_0 \in \bbR^{k \times n}$ is flexible, and can be picked to be the zero vector without prior information. We consider a constant step-size $\eta$ and smoothing radius $r$. 
    
\item\textit{Zeroth-order gradient estimator (Lines 3 and 4).} We perform sampling for $(x_0)_t^i$ and $\delta_t^i$ on Line 3, where we note that the sampling procedure is independent for each sample $i$. We adopt a two-point zeroth-order gradient estimator, as seen in Line 4 of the algorithm. 
\vspace{-2pt}
\item\textit{Minibatch averaging and update (Line 5)} At the end of each iteration, we average over the $m$ independent zeroth-order gradient estimators in the mini-batch to form an estimator $z_t \leftarrow \frac{1}{m} \sum_{i=1}^m z_t^i$, and update the policy $(K_t, \tilde{K}_t)$ accordingly. 

\end{itemize}

\section{Main results}
In this section, we will present the global convergence of the proposed policy gradient algorithm. Our results can be considered as a special case of the general convergence results for stochastic zeroth-order algorithms applied to locally smooth and gradient dominant non-convex problems derived in \cite{malik2019derivative}, but for completeness we present the proof here.
\subsection{Convergence of Algorithm~\ref{alg:zeroth-order lqr}}
\textbf{Preliminaries.} We first introduce several quantities which will appear in the main results. First, we define the stability region $\mathcal{G}_{10J_0}$ which we show later the iterates of Algorithm \ref{alg:zeroth-order lqr} stay within with large probability.  Let
\small
\begin{align*}
    &\mathcal{G}_{10J_0} := \left\{(K,\tilde{K}): J(K,\tilde{K}) \leq 10J_0) \right\}, \quad J_0 := J(K_0,\tilde{K}_0).
\end{align*}
\normalsize
Using the notation $\hat{K} = (K,\tilde{K})$, we define
\small
\begin{align*}
    &Z_{\infty} := \sup_{\substack{\hat{K} \in \mathcal{G}_{10J_0} \\ x_0,\delta}} \left\{\norm*{z_r(\hat{K}; x_0,\delta)}\right\}, \\
    &Z_{2}\! := \!\sup_{\hat{K} \in \mathcal{G}_{10J_0}}\left\{\!\bbE\! \left[\norm*{z_r(\hat{K};x_0,\delta) - \bbE \left[z_r(\hat{K};x_0,\delta)  \mid \hat{K} \right] }^2\right]\right\}\!,
\end{align*}
\normalsize
which form bounds on the maximum size and variance of the zeroth-order gradient update respectively.
Next, by Proposition \ref{proposition: local smoothness}, note that there exists a local radius $\rho > 0$, local Lipschitz parameter $\lambda > 0$ and Lipschitz gradient parameter $L > 0$ such that  if $(K,\tilde{K}) \in \mathcal{G}_{10J_0}$,  if $\norm{(K', \tilde{K}') - (K,\tilde{K})} \leq \rho,$ then
\small
\begin{align*}
    &  \abs*{ J(K', \tilde{K}') - J (K,\tilde{K})} \leq \lambda \norm*{(K', \tilde{K}') - (K,\tilde{K})}, \\
    &\abs*{J(K',\tilde{K}';x_0) - J(K,\tilde{K};x_0)} \leq \lambda \norm*{(K',\tilde{K}') - (K,\tilde{K})} \quad \forall x_0 \sim \mathcal{D}, \\
    & \norm*{\nabla J(K', \tilde{K}') - \nabla J (K,\tilde{K})}\leq L \norm*{(K', \tilde{K}') - (K,\tilde{K})}.
\end{align*}
\normalsize
These preliminary definitions pave the way for our main results. We begin with stating Theorem \ref{alg:zeroth-order lqr}, showing the convergence of the zeroth-order tracking algorithm.

\begin{theorem}[Convergence of LQR tracking]
\label{theorem:lqr_tracking_algorithm_convergence}
Suppose the step-size $\eta$, smoothing radius $r$ are chosen to satisfy
{\small
\begin{align*}
    \eta \leq \min \left\{\frac{m \ep \mu }{240L Z_2}, \frac{1}{4L},  \frac{\rho}{Z_{\infty}}\right\}, r \leq \min \left\{ \sqrt{\frac{\ep \mu}{240 L^2}}, \rho \right\}.
\end{align*}
}
Then, if the error tolerance $\ep$ satisfies $\ep\log(120\Delta_0/\ep)) \leq 5\Delta_0$, the iterate $(K_t,\tilde{K}_t)$ produced by Algorithm \ref{alg:zeroth-order lqr} satisfies 
\begin{align*}
    J(K_T, \tilde{K}_T) - J^{\mathrm{opt}} \leq \ep,
\end{align*}
when $T = \frac{4}{\mu \eta} \log(120\Delta_0/\ep)$ steps,
with probability at least 3/4, where $\Delta_0 = J(K_0, \tilde{K}_0) - J^{\mathrm{opt}}$.
\end{theorem}

This gives rise to the following corollary.
\begin{corollary}[Sample complexity]
\label{corollary:sample_complexity_2pt}
To reach an $\ep$-optimality gap with probability at least 3/4, subject to the assumptions in Theorem \ref{theorem:lqr_tracking_algorithm_convergence}, Algorithm \ref{alg:zeroth-order lqr} requires $$T = \tilde{O}\left(\frac{d}{\min\{m\mu \ep/\lambda^2, d/(L), \rho\mu /\lambda\}}\right) \mbox{steps},$$
where $\tilde{O}$ hides log terms and $d=2nk$.
\end{corollary}
\begin{proof}[Proof sketch of Corollary \ref{corollary:sample_complexity_2pt}]
The key to showing the corollary is to upper bound $Z_{\infty}$ and $Z_2$, defined earlier, which bound the maximum size and the variance of the zeroth-order estimators respectively. Using techniques from zeroth-order optimization (e.g. \cite{shamir2017optimal}), we can prove that the following bounds
\begin{align*}
    Z_{\infty} \leq d\lambda , \quad Z_{2} \leq d\lambda^2
\end{align*}
hold, where we recall $\lambda >0$ is the (uniform) local Lipschitz parameter over $\mathcal{G}_{10J_0}$, and $d = 2nk$ is the dimension of the optimization problem. The full proof can be found in Appendix \ref{appendix:sample-complexity}.
\end{proof}
We now discuss the implications of Theorem \ref{theorem:lqr_tracking_algorithm_convergence} and Corollary \ref{corollary:sample_complexity_2pt}.

\paragraph{Probabilistic convergence}
Inherently, maintaining stability of $K$, \emph{i.e.} $\rho(\sqrt{\gamma}(A+BK)) < 1$, is a critical issue for LQR learning, since when $K$ is unstable, the infinite-horizon LQR cost can diverge \cite{DBLP:conf/icml/Fazel0KM18}. This stability requirement is in tension with the constant accumulation of noise in the learning procedure due to the stochastic zeroth-order updates. For this reason, the convergence result holds with a constant probability, namely 0.75. However, we note that by allowing the minibatch size to scale with $\tilde{O}(1/\ep)$, it is possible to improve the convergence to a high-probability result. By following analysis in Theorem 2 of \cite{malik2019derivative}, it can be shown that by allowing the minibatch size to scale as $\tilde{O}(\frac{\log(1/\delta)}{\ep})$ for a user-chosen $\delta > 0$, the per-iteration decrease in cost which holds in expectation in Lemma \ref{lemma: per-iteration-change-in-optimality gap-first-time} can in fact hold with probability $1 - \delta$. Then, over the course of the algorithm with $T$ steps it can be shown that the algorithm is stable with probability $1 - T\delta$. Since $\delta > 0$ is user-chosen, this represents a high-probability convergence result. We leave careful analysis of this issue to future work.

\paragraph{Dependence on $d$ and $\ep$} From Corollary \ref{corollary:sample_complexity_2pt}, we note that the sample complexity displays (1) a linear dependence on $d$ , and (2) an $\ep^{-1}$ scaling. These observations are consistent with the sample complexity results for two-point estimators for the zero-target LQR in \cite{malik2019derivative} (note the dimension of our optimization problem is double that of the vanilla LQR due to having an extra $\tilde{K}$).

\paragraph{Speedup with increasing minibatch size}We note that the convergence speedup with the minibatch size $m$ may scale as $\tilde{O}(m)$. This is because the averaging process reduces the estimator variance, thus potentially allowing us to increase the step-size $\eta$ by a factor of $m$. However, due to stability issues, there exists some maximum step-size which permits convergence. In particular, $\eta \leq \tilde{O}\left(\frac{1}{L}, \frac{\rho\mu}{d\lambda}\right)$ must hold. We now proceed to discuss the proof for Theorem \ref{theorem:lqr_tracking_algorithm_convergence}.


\subsection{Proof of Theorem~\ref{theorem:lqr_tracking_algorithm_convergence}}
 At a high level, the proof comprises the following steps.
\begin{enumerate}
    \item Showing an expected decrease in cumulative cost every iteration.
    \item Showing that the sequence $(K_t,\tilde{K}_t)$ stays in $\mathcal{G}_{10J_0}$ throughout the algorithm with large probability, via a martingale argument.
    \item Prove convergence by combining the above two results.
\end{enumerate}

We first show a per-iteration change in optimality gap for the joint sequence, which will be essential in the convergence proofs. For convenience, define the optimality gap
\begin{align*}
    \Delta_t := J(K_t,\tilde{K}_t) - J^{\mathrm{opt}}.
\end{align*}
\begin{restatable}[Per-iteration change in optimality gap]{lemma}{perIterOptGap}\label{lemma: per-iteration-change-in-optimality gap-first-time}
Suppose $(K_t,\tilde{K}_t) \in \mathcal{G}_{10J_0}$. Let $m$ be the mini-batch size.
Then, if we choose step-size $\eta > 0$, smoothing radius $r > 0$ such that
\begin{align*}
    \eta \leq \min \left\{\frac{\rho}{Z_{\infty}}, \frac{1}{4L}\right\}, \quad r \leq  \min\left\{\sqrt{\frac{\ep\mu}{240L^2}}, \rho \right\},
\end{align*}
the optimality gap satisfies the following bound
\begin{align*}
    \mathbb{E}^t [\Delta_{t+1}] \leq \left(1 - \frac{\eta\mu}{4}\right) \Delta_t + \frac{\eta^2L}{2}\left(\frac{Z_2}{m} \right) + \frac{\eta \mu}{120}\ep.
\end{align*}
\end{restatable}
We defer the proof of Lemma \ref{lemma: per-iteration-change-in-optimality gap-first-time} to Appendix \ref{appendix:optimality-gap}, but note that gradient dominance (Proposition \ref{proposition:gradient dominance of lqr tracking}) and local smoothness (Proposition \ref{proposition: local smoothness}) play key roles in the proof. We next show that with a large (but constant) probability, the joint sequence $(K_t,\tilde{K}_t)$ produced by the algorithm remains in the stable region $\mathcal{G}_{10J_0}$, satisfying the assumption necessary for Lemma \ref{lemma: per-iteration-change-in-optimality gap-first-time} to hold.
\vspace{-5pt}

\begin{restatable}[Stability of algorithm]{proposition}{stability}
\label{proposition:stability-first-time}
Suppose 
\begin{align*}
    \eta\! \leq\! \min \left\{\frac{\mu \ep m}{240LZ_2}, \frac{\rho}{Z_{\infty}}, \frac{1}{4L}\right\}, \ \ r \!\leq\!  \min\left\{\sqrt{\frac{\ep\mu}{240L^2}}, \rho \right\}\!,
\end{align*}
and that $\ep > 0$ is small enough such that
\begin{align*}
    \ep \log\left(\frac{120\Delta_0}{\ep} \right) \leq 5 \Delta_0.
\end{align*}
Then, with probability larger than 4/5, $(K_t,\tilde{K}_t)$ remains in the region $\mathcal{G}_{10J_0}$ for the duration of the algorithm.
\end{restatable}
\begin{proof}
Suppose we run the algorithm for $T$ iterations. 
Define a stopping time $\tau := \min_{0 \leq t \leq T} \{\Delta_t > 10J_0\}$. Consider the sequence  $\{Y_{t}\}_{t=0}^T$:
\begin{align*}
    Y_t = \Delta_{t \wedge \tau} + (T - t) \left[ \frac{\eta^2L}{2}\frac{Z_2}{m} + \frac{\eta\mu }{120}\ep \right], \quad t = 0,1,\dots,T.
\end{align*}
We now show that this forms a non-negative supermartingale sequence. Non-negativity and adaptation of $Y_t$ to the filtration $\mathcal{F}_t$ are clear to see. We now prove integrability of $\{Y_t\}_{t=0}^T$. Note that
\small
\begin{align*}
   \bbE\left[\Delta_{t \wedge \tau}\right] &= \bbE\left[\Delta_{t \wedge \tau} 1_{t < \tau}\right] + \bbE\left[\Delta_{t \wedge \tau} 1_{t \geq \tau}\right] \\
   &\labelrel={eq:mart_int_0}  \bbE\left[\Delta_t 1_{t < \tau}\right] + \bbE \left[\sum_{i=1}^t \Delta_i 1_{\tau = i}\right] \\
   &\labelrel\leq{eq:mart_int_i} 10 J_0 +  \sum_{i=1}^t \bbE \left[\bbE^{i-1}[\Delta_{i}1_{\tau \geq i}] \right] \\
   &\labelrel\leq{eq:mart_int_ii} 10 J_0 + \sum_{i=1}^t \bbE \left[\bbE^{i-1}[\Delta_{i}]1_{\tau \geq i} \right] \\
   &\labelrel\leq{eq:mart_int_iii} 10 J_0 + \sum_{i=1}^t \bbE\left[\left(\left( 1 - \frac{\eta \mu}{4}\right)\Delta_{i-1} + \left[ \frac{\eta^2L}{2}\frac{Z_2}{m} + \frac{\eta\mu }{120}\ep \right]  \right) 1_{\tau \geq i}\right] \\
   &\labelrel\leq{eq:mart_int_iv} 10 J_0 + \sum_{i=1}^t \bbE\left[\Delta_{i-1}1_{\tau \geq i} + \left[ \frac{\eta^2L}{2}\frac{Z_2}{m} + \frac{\eta \mu}{120}\ep \right]  \right] \\
   &\labelrel\leq{eq:mart_int_v} 10tJ_0 + t\left[ \frac{\eta^2L}{2}\frac{Z_2}{m} + \frac{\eta \mu}{120}\ep \right] < \infty
\end{align*}
\normalsize
Above, \eqref{eq:mart_int_0} holds since $1_{t \geq \tau} = \sum_{i=1}^t 1_{\tau = i}$, while \eqref{eq:mart_int_i} is a consequence of the definition of $\tau$, which ensures that $\Delta_t 1_{t < \tau} \leq 10 J_0$. Meanwhile, \eqref{eq:mart_int_ii} holds since $1_{\tau \geq i}$ is adapted to $\mathcal{F}_{t-1}$. \eqref{eq:mart_int_iii} holds due to Lemma \ref{lemma: per-iteration-change-in-optimality gap-first-time}, while \eqref{eq:mart_int_iv} holds since $\eta, \mu \geq 0$. Finally, \eqref{eq:mart_int_v} holds due to our choice of $\tau$. Since $\left[ \frac{\eta^2L}{2}\frac{Z_2}{m} + \frac{\eta \mu}{120}\ep \right] < \infty$ holds, and $\Delta_t \geq 0$, it follows that each $Y_t$ is integrable.

We proceed to show that $\bbE^t[Y_{t+1}] \leq Y_t$. For notational convenience, we define 
\small
$$N_2 := \frac{\eta^2L}{2}\frac{Z_2}{m} + \frac{\eta \mu }{120}\ep  $$\normalsize
in the proofs. Observe that
{\small
\begin{align*}
   & \bbE^t[Y_{t+1}] \\
    &= \bbE^t\left[ \Delta_{(t+1) \wedge \tau} 1_{\tau \leq t}\right] 
    + \bbE^t\left[ \Delta_{(t+1) \wedge \tau} 1_{\tau > t}\right]+ (T- (t+1)) N_2 \\
    &\labelrel={eq:supermart_i-main} \bbE^t\left[ \Delta_{t \wedge \tau} 1_{\tau \leq t}\right] + \bbE^t\left[ \Delta_{t+1} 1_{\tau > t}\right] + (T- (t+1)) N_2 \\
    &\labelrel={eq:supermart_ii-main} \Delta_{t \wedge \tau} 1_{\tau \leq t} + \bbE^t\left[ \Delta_{t+1} \right]1_{\tau > t} + (T- (t+1)) N_2 \\
    &\labelrel\leq{eq:supermart_iii-main}\Delta_{t \wedge \tau} 1_{\tau \leq t} + \left( \left(1 - \frac{\eta \mu}{4}\right) \Delta_t + N_2\right) 1_{\tau > t} + (T- (t+1)) N_2 \\
    &\leq \Delta_{t \wedge \tau} + 
    (T-t)N_2 = Y_t
\end{align*}
}
Above, \eqref{eq:supermart_i-main} holds since $\Delta_{(t+1)\wedge \tau }1_{\tau \leq t} = \Delta_{t \wedge \tau} 1_{\tau \leq t}$, and similarly $\Delta_{(t+1) \wedge \tau} 1_{\tau > t} = \Delta_{t+1} 1_{\tau > t}$. Meanwhile, \eqref{eq:supermart_ii-main} holds since $\Delta_{t \wedge \tau}$ and $1_{\tau \leq t}$ are both adapted to $\mathcal{F}_t$, while \eqref{eq:supermart_iii-main} holds due to Lemma \ref{lemma: per-iteration-change-in-optimality gap-first-time}. 
Thus $\{Y_t\}_{t=0}^T$ forms a non-negative supermartingale sequence. Then via a maximal 
inequality for non-negative supermartingales (see e.g. Exercise 4.8.2 in Version 5 of \cite{durrett2019probability}), we have that
{\small
\begin{align*}
    \bbP\left(\max_{0 \leq t \leq T} Y_t \geq \nu \right) &\leq \frac{\mathbb{E}[Y_{0}]}{\nu} \\
    &\leq \frac{1}{\nu} \left(\mathbb{E}\left[\Delta_{0} + T \left[ \frac{\eta^2L}{2}\frac{Z_2}{m} + \frac{\eta \mu}{120}\ep \right]\right]\right) \\
    &= \frac{1}{\nu}\left(\Delta_0 + T \left[ \frac{\eta^2L}{2}\frac{Z_2}{m} + \frac{\eta\mu}{120}\ep \right]\right).
\end{align*}
}
Then, continuing with the calculations, choosing $\nu = 10J_0$, $\eta \leq \frac{\mu\ep m}{240 L Z_2}$ and $T = \frac{4}{\eta\mu} \log(120\Delta_0/\ep)$, it follows that
\begin{align*}
    \bbP\left(\max_{0 \leq t \leq T} Y_t \geq \nu \right) \leq \frac{1}{\nu} \left(\Delta_0 + \frac{\ep}{5}\log(120\Delta_0/\ep)\right) \leq \frac{1}{5},
\end{align*}
if we choose $\ep$ small enough such that $ \ep\log(120\Delta_0/\ep)) \leq 5\Delta_0$. 
Suppose $\tau \leq  T$. Then, $\max_t Y_t \geq Y_{\tau} \geq \Delta_{\tau} \geq 10 J_0$. Hence, 
\begin{align*}
    \bbP(\tau \leq T) &\leq \bbP\left(\max_{0 \leq t \leq T} Y_t \geq \nu \right) \leq \frac{1}{5}.
\end{align*}
This completes the proof of Proposition~\ref{proposition:stability-first-time}.
\end{proof}
We are now ready to state the proof of Theorem \ref{theorem:lqr_tracking_algorithm_convergence}.
\begin{proof}[Proof of Theorem \ref{theorem:lqr_tracking_algorithm_convergence}]
Observe that for any $0 \leq t \leq T$,
\small
\begin{align*}
    \mathbb{E}^t[\Delta_{t+1} 1_{\tau> t+1}] &\leq \mathbb{E}^t[\Delta_{t+1} 1_{\tau > t}] = \mathbb{E}^t[\Delta_{t+1}] 1_{\tau > t}.
\end{align*}
\normalsize
We will then bound $\mathbb{E}^t[\Delta_{t+1} 1_{\tau> t+1}]$ by bounding $\mathbb{E}^t[\Delta_{t+1}] 1_{\tau > t}$.
There are two cases to consider. 
\begin{enumerate}
    \item The first is when $\tau > t$. In this case, by Lemma \ref{lemma: per-iteration-change-in-optimality gap-first-time}, then
    \small
\begin{align*}
    \mathbb{E}^t[\Delta_{t+1}] \leq \left(1 - \frac{\eta\mu}{4}\right) \Delta_t + \frac{\eta^2L}{2}\left(\frac{Z_2}{m}  \right) + \frac{\eta \mu}{120}\ep.
\end{align*}
\normalsize
    \item The second is when $\tau \leq t$. Here,  $\mathbb{E}^t[\Delta_{t+1}] 1_{\tau > t} = 0$.
\end{enumerate}
Thus, combining the two cases, we obtain that
{\small
\begin{align*}
    \mathbb{E}^t[\Delta_{t+1}] 1_{\tau > t} \leq \left(1 - \frac{\eta\mu}{4}\right) \Delta_t 1_{\tau > t} + \frac{\eta^2L}{2}\left(\frac{Z_2}{m} \right) + \frac{\eta\mu}{120}\ep.
\end{align*}}
\noindent Taking expectations over $\mathcal{F}_t$ and using induction, we find
\small
\begin{align*}
    &\mathbb{E}^t[\Delta_{t+1}] 1_{\tau > t+1} \\
    &\leq \left(1 - \frac{\eta\mu}{4}\right)^{t+1} \Delta_0 + 
    \left(\frac{\eta^2 L}{2} \frac{Z_2}{m} +   \frac{\eta \mu}{120}\ep\right) \sum_{i=0}^t \left(1-\frac{\eta\mu}{4}\right)^i \\
    &\labelrel\leq{eq:inf_sum_1/p-main} \left(1 - \frac{\eta\mu}{4}\right)^{t+1} \Delta_0 +
    \frac{2\eta L}{\mu}\left(\frac{Z_2}{m} \right) + \frac{4\ep}{120},
\end{align*}
\normalsize
where \eqref{eq:inf_sum_1/p-main} is a consequence of $\sum_{i=0}^{\infty} \left(1 - \frac{\eta\mu}{4}\right)^i \leq \frac{1}{1 - (1 - \eta\mu/4)} = \frac{4}{\eta\mu}$.
After substituting $T = \frac{4}{\eta\mu} \log(\frac{120 \Delta_0}{\ep})$, since $\eta \leq \frac{\mu\ep m}{240 L Z_2}$, we find that 
    \begin{align*}
    \mathbb{E}[\Delta_T 1_{\tau > T}] &\leq \frac{\ep}{120} + \frac{\ep}{120} + \frac{\ep}{30} = \frac{\ep}{20}.
    \end{align*} 
Hence,
\small
\begin{align*}
    \bbP(\Delta_T \geq \ep) &\leq \bbP(\Delta_T 1_{\tau > T} \geq \ep) + \bbP(1_{\tau \leq T}) \\
    &\labelrel\leq{eq:markov+stability_prob-main} \frac{\mathbb{E}[\Delta_T 1_{\tau > T}]}{\ep} + \frac{1}{5} \leq \frac{1}{20} + \frac{1}{5} = \frac{1}{4}.
\end{align*}
\normalsize
We used Markov's inequality and Proposition \ref{proposition:stability-first-time} to get \eqref{eq:markov+stability_prob-main}.
\end{proof}
\vspace{-10pt}
\section{Numerical results}
\vspace{-5pt}
To demonstrate convergence, we test our algorithm on a LQR problem with A,B, Q, R matrices each in $\bbR^{3 \times 3}$, which we list below:
\begin{align*}
    &A = \left[ \begin{matrix} 1 & 0 & -10 \\ -1 & 1 & 0 \\ 0 & 0 & 1 \end{matrix} \right], \quad B = \left[ \begin{matrix} 1 & -10 & 0 \\ 0 & 1 & 0 \\ -1 & 0 & 1 \end{matrix} \right], \quad Q = \left[ \begin{matrix} 2 & -1 & 0 \\ -1 & 2 & -1 \\ 0 & -1 & 2 \end{matrix} \right], \quad R = \left[ \begin{matrix} 5 & -3 & 0 \\ -3 & 5 & -2 \\ 0 & -2 & 5 \end{matrix} \right].
\end{align*}
We sample the initial state $x_0$ from $N(0,I)$, and set the discount factor as $\gamma = 0.9$.  The tracking target $x^*$, fixed throughout, is randomly sampled from $N(0,I)$. In Figure \ref{fig:minibatch_plot}, we show the learning process of our algorithm for three settings of the minibatch size $m$. For each $m$, we tuned the stepsize $\eta$ to be the maximum possible ensuring convergence. We see that within 2000 iterations, our proposed algorithm can converge to almost $10^{-1}$ for $m = 1$ and almost $10^{-2}$ for $m = 5, 10$. By raising the minibatch size from $m = 1$ to $m = 5$, we could choose a stepsize that was 5 times larger, and this reduced optimality gap by almost 10 times after 2000 iterations. However, further increasing $m$ from 5 to 10 yields no convergence speedup, as we are unable to further increase the stepsize $\eta$ without leading to divergence. This reflects our earlier observation that  {\small$\eta \leq \tilde{O}\left(\frac{1}{L}, \frac{\rho\mu}{d\lambda}\right)$} has to hold.
\begin{figure}[H]
    \centering
    \includegraphics[width = 0.5\textwidth]{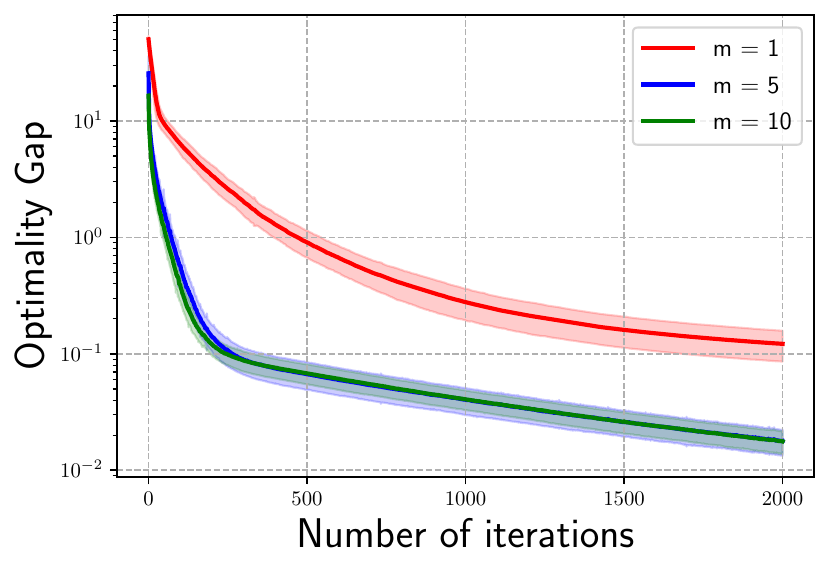}
    \caption{Optimality gap (mean trajectory and 1 std-dev confidence band over 50 trials)}
    \label{fig:minibatch_plot}
\end{figure}


\vspace{-5pt}
\section{Conclusion and future work}
In this paper, we proved that gradient dominance and local smoothness hold for the general LQR tracking problem. This paved the way for the development of a zeroth-order algorithm that can converge to the globally optimal policy. We highlight a few directions for future work: (1) considering distributed settings with heterogeneous dynamics, (2) studying more general cost functions, (3) studying adversarial noise. 
\vspace{-3pt}

\bibliographystyle{ieeetr}
\bibliography{references}

\begin{thebibliography}{10}

\bibitem{anderson2007optimal}
B.~D. Anderson and J.~B. Moore, {\em Optimal control: linear quadratic
  methods}.
\newblock Courier Corporation, 2007.

\bibitem{DBLP:conf/aistats/Abbasi-YadkoriL19}
Y.~Abbasi{-}Yadkori, N.~Lazic, and C.~Szepesv{\'{a}}ri, ``Model-free linear
  quadratic control via reduction to expert prediction,'' in {\em The 22nd
  International Conference on Artificial Intelligence and Statistics}, vol.~89
  of {\em PMLR}, pp.~3108--3117, 2019.

\bibitem{oymak2019non}
S.~Oymak and N.~Ozay, ``Non-asymptotic identification of lti systems from a
  single trajectory,'' in {\em 2019 American Control Conference (ACC)},
  pp.~5655--5661, IEEE, 2019.

\bibitem{dean2019sample}
S.~Dean, H.~Mania, N.~Matni, B.~Recht, and S.~Tu, ``On the sample complexity of
  the linear quadratic regulator,'' {\em Foundations of Computational
  Mathematics}, pp.~1--47, 2019.

\bibitem{DBLP:conf/icml/Fazel0KM18}
M.~Fazel, R.~Ge, S.~M. Kakade, and M.~Mesbahi, ``Global convergence of policy
  gradient methods for the linear quadratic regulator,'' in {\em Proceedings of
  the 35th International Conference on Machine Learning}, vol.~80 of {\em
  PMLR}, pp.~1466--1475, 2018.

\bibitem{malik2019derivative}
D.~Malik, A.~Pananjady, K.~Bhatia, K.~Khamaru, P.~Bartlett, and M.~Wainwright,
  ``Derivative-free methods for policy optimization: Guarantees for linear
  quadratic systems,'' in {\em PMLR}, vol.~89, pp.~2916--2925, 2019.

\bibitem{pmlr-v120-li20c}
Y.~Li, Y.~Tang, R.~Zhang, and N.~Li, ``Distributed reinforcement learning for
  decentralized linear quadratic control: A derivative-free policy optimization
  approach,'' vol.~120 of {\em PMLR}, pp.~814--814, Jun 2020.

\bibitem{polyak1963gradient}
B.~T. Polyak, ``Gradient methods for the minimisation of functionals,'' {\em
  USSR Computational Mathematics and Mathematical Physics}, vol.~3, no.~4,
  pp.~864--878, 1963.

\bibitem{Lojasiewicz1963gradient}
S.~Łojasiewicz, ``A topological property of real analytic subsets,'' {\em
  Coll. du CNRS, Les \'equations aux d\'eriv\'ees partielles}, pp.~87--89,
  1963.

\bibitem{nesterov2017random}
Y.~Nesterov and V.~Spokoiny, ``Random gradient-free minimization of convex
  functions,'' {\em Foundations of Computational Mathematics}, vol.~17, no.~2,
  pp.~527--566, 2017.

\bibitem{shamir2017optimal}
O.~Shamir, ``An optimal algorithm for bandit and zero-order convex optimization
  with two-point feedback,'' {\em The Journal of Machine Learning Research},
  vol.~18, no.~1, pp.~1703--1713, 2017.

\bibitem{hajinezhad2019zone}
D.~Hajinezhad, M.~Hong, and A.~Garcia, ``{ZONE}: Zeroth-order nonconvex
  multiagent optimization over networks,'' {\em IEEE Transactions on Automatic
  Control}, vol.~64, no.~10, pp.~3995--4010, 2019.

\bibitem{sahu2018distributed}
A.~K. Sahu, D.~Jakovetic, D.~Bajovic, and S.~Kar, ``Distributed zeroth order
  optimization over random networks: A {Kiefer-Wolfowitz} stochastic
  approximation approach,'' in {\em Proceedings of the 57th IEEE Conference on
  Decision and Control}, pp.~4951--4958, 2018.

\bibitem{tang2019distributed}
Y.~Tang and N.~Li, ``Distributed zero-order algorithms for nonconvex
  multi-agent optimization,'' in {\em 2019 57th Annual Allerton Conference on
  Communication, Control, and Computing (Allerton)}, pp.~781--786, IEEE, 2019.

\bibitem{agarwal2019logarithmic}
N.~Agarwal, E.~Hazan, and K.~Singh, ``Logarithmic regret for online control,''
  in {\em Advances in Neural Information Processing Systems}, pp.~10175--10184,
  2019.

\bibitem{DBLP:conf/colt/SimchowitzSH20}
M.~Simchowitz, K.~Singh, and E.~Hazan, ``Improper learning for non-stochastic
  control,'' in {\em Conference on Learning Theory}, vol.~125 of {\em PMLR},
  pp.~3320--3436, 2020.

\bibitem{li2019online}
Y.~Li, X.~Chen, and N.~Li, ``Online optimal control with linear dynamics and
  predictions: Algorithms and regret analysis.,'' in {\em NeurIPS},
  pp.~14858--14870, 2019.

\bibitem{li2020online}
Y.~Li, S.~Das, and N.~Li, ``Online optimal control with affine constraints,''
  {\em arXiv preprint arXiv:2010.04891}, 2020.

\bibitem{recht2019tour}
B.~Recht, ``A tour of reinforcement learning: The view from continuous
  control,'' {\em Annual Review of Control, Robotics, and Autonomous Systems},
  vol.~2, pp.~253--279, 2019.

\bibitem{karimi2016linear}
H.~Karimi, J.~Nutini, and M.~Schmidt, ``Linear convergence of gradient and
  proximal-gradient methods under the polyak-{\l}ojasiewicz condition,'' in
  {\em Joint European Conference on Machine Learning and Knowledge Discovery in
  Databases}, pp.~795--811, Springer, 2016.

\bibitem{kakade2002approximately}
S.~Kakade and J.~Langford, ``Approximately optimal approximate reinforcement
  learning,'' in {\em In Proc. 19th International Conference on Machine
  Learning}, Citeseer, 2002.

\bibitem{flaxman2004online}
A.~D. Flaxman, A.~T. Kalai, and H.~B. McMahan, ``Online convex optimization in
  the bandit setting: gradient descent without a gradient,'' in {\em
  Proceedings of the sixteenth annual ACM-SIAM symposium on Discrete
  algorithms}, pp.~385--394, 2005.

\bibitem{durrett2019probability}
R.~Durrett, {\em Probability: theory and examples}, vol.~49.
\newblock Cambridge university press, 2019.

\end{thebibliography}

\begin{appendices}

\section{Optimal controller for LQR tracking problem}

We derive in this section the optimal controller for the cost $J$ in Equation \eqref{eq:LQR-tracking}. We will show that the optimal controller takes the form $u_t = Kx_t + \tilde{K}x^*$.

\begin{proposition}[Optimal controller for $J$]
\label{proposition:optimal controller for J}
Suppose the system given by $(A, B)$ is controllable. The cost-to-go value function, $V(x_0)$, for the discounted infinite-horizon problem \ref{eq:LQR-tracking}, takes the form
\begin{align*}
    V(x_0) = x_0^\top P x_0 + 2x_0^\top q + r,
\end{align*}
where 
\begin{align*}
    &P = Q - (\gamma B^\top PA)^\top (R + \gamma B^\top PB)^{-1} (\gamma B^\top PA) + \gamma A^\top PA, \\
    &q = -(I - \gamma (A+BK)^\top)^{-1}Qx^*, \\
    &r = \frac{1}{1-\gamma} \left( (x^*)^\top Qx^* - (\gamma B^\top q)^\top (R + \gamma B^\top P B)^{-1} (\gamma B^\top q) \right).
\end{align*}
Moreover, the optimal controller is time-invariant and of the form
\begin{align*}
    u = Kx + \tilde{K}x^*, where
\end{align*}
\[K = - (R + \gamma B^\top P B)^{-1} (\gamma B^\top PA), \quad \tilde{K} =-(R + \gamma B^\top P B)^{-1}\gamma B^\top (I - \gamma (A+BK)^\top)^{-1}Q.\]
Then, this same controller is also the optimal policy to minimize the expected cost $J$ (where expectation is taken over the initial state $x_0$). The optimal expected cost takes the following form
\begin{align*}
    J^{\mathrm{opt}} &= \mathbb{E}[x_0^\top Px_0 + 2x_0^\top q + r] = \tr(P\Sigma) + 2 \mathbb{E}[x_0]^\top q + r,
\end{align*}
where $\Sigma = \mathbb{E}[x_0x_0^\top]$.
\end{proposition}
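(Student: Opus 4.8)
The plan is to proceed by dynamic programming: I posit a quadratic-affine ansatz for the cost-to-go and pin down its coefficients by matching terms in the Bellman equation. Since the only stochasticity enters through $x_0$ while the dynamics $x_{t+1} = Ax_t + Bu_t$ are deterministic, the value function from a generic state $x$ satisfies
\[
V(x) = \min_u \left[ (x - x^*)^\top Q (x - x^*) + u^\top R u + \gamma V(Ax + Bu)\right].
\]
First I would insert the ansatz $V(x) = x^\top P x + 2 x^\top q + r$ into the right-hand side.

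Next I would carry out the inner minimization over $u$. With the ansatz substituted, the bracket is a strictly convex quadratic in $u$, its Hessian being $2(R + \gamma B^\top P B) \succ 0$ because $R \succ 0$ and $P \succeq 0$; hence the first-order condition yields the unique minimizer $u^* = Kx + g$ with $K$ and $g$ exactly as stated. Plugging $u^*$ back in and grouping the result by its quadratic, linear, and constant dependence on $x$, then equating coefficients against $x^\top P x + 2 x^\top q + r$, produces the three fixed-point equations. A convenient simplification is the identity $\gamma B^\top P A = -(R + \gamma B^\top P B)K$, which recasts the $q$-equation in the clean closed-loop form $(I - \gamma(A+BK)^\top) q = -Q x^*$ and collapses the $r$-equation to $(1-\gamma) r = \mathrm{const}$.

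The real work lies in establishing existence and verifying optimality. Existence of a positive semidefinite $P$ solving the discrete algebraic Riccati equation follows from controllability of $(\sqrt{\gamma}A, \sqrt{\gamma}B)$ together with $Q \succ 0$, via standard Riccati theory; crucially the stabilizing solution yields $\rho(\sqrt{\gamma}(A+BK)) < 1$. From this I obtain $\rho(\gamma(A+BK)) \le \sqrt{\gamma} < 1$, so $I - \gamma(A+BK)^\top$ is invertible and $q$ is uniquely determined, while the scalar equation gives $r$ via the factor $1/(1-\gamma)$. The main obstacle is the verification step: I must confirm that the candidate $V$ is truly the optimal cost-to-go rather than merely a Bellman fixed point. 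I would settle this by a completing-the-square argument showing that, for any stabilizing control, the discounted cost telescopes to $V(x_0)$ plus a nonnegative penalty measuring deviation from $u^*$; along the optimal closed-loop trajectory the transversality term $\gamma^t V(x_t) \to 0$ by the spectral bound, so the minimum is attained at $u^* = Kx + g$.

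Finally, the expected optimal cost comes from taking expectation of $V(x_0)$ over $x_0 \sim N(0,\Sigma)$: using $\bbE[x_0^\top P x_0] = \tr(P\,\bbE[x_0 x_0^\top]) = \tr(P\Sigma)$ together with linearity gives $C(u^*) = \tr(P\Sigma) + 2\,\bbE[x_0]^\top q + r$, as claimed, where the middle term vanishes here since $\bbE[x_0] = 0$.
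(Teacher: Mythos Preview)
Your proposal is correct and follows essentially the same approach as the paper: posit the quadratic-affine ansatz, write the Bellman equation, minimize over $u$ via the first-order condition, substitute back, and match coefficients to obtain the fixed-point equations for $P$, $q$, and $r$. If anything, you are more careful than the paper, which simply asserts the quadratic form ``by appealing to the value function in the finite horizon and taking the limit'' and does not explicitly address existence of the Riccati solution or the verification/transversality argument that you outline.
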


\begin{proof}
We note that the cost-to-go value function starting from $x_0$ takes the form 
\[V(x_0) = x_0^\top P x_0 + 2x_0^\top q + r, \]
which is motivated by appealing to the value function in the finite horizon \cite{anderson2007optimal} and taking the limit as $t$ goes to infinity. 

Then, 
\begin{align*}
    V(x) = \min_u (x - x^*)^\top Q(x - x^*) + u^\top Ru + \gamma V(Ax + Bu). 
\end{align*}
Taking the gradient with respect to $u$, we find that
\begin{align*}
    \nabla_u &= 2Ru + \nabla_u (\gamma \left((Ax + Bu)^\top P(Ax + Bu) + 2(Ax + Bu)^\top q + r\right)) \\
    &= 2Ru + 2\gamma B^\top P(Ax + Bu) + 2\gamma B^\top q.
\end{align*}
Setting $\nabla_u$ to be 0, we find that
$2(R + \gamma B^\top P B)u + 2\gamma B^\top PAx + 2\gamma B^\top q = 0$, which implies that
\begin{align*}
    u^* = - (R + \gamma B^\top P B)^{-1} (\gamma B^\top PA x + \gamma B^\top q).
\end{align*}
Then, plugging this back into $V(x)$,
\begin{align*}
    V(x) &= x^\top Q x + 2x^\top(-Qx^*) + (x^*)^\top Qx^* \\
    &\quad + \left((R + \gamma B^\top P B)^{-1}(\gamma B^\top PAx + \gamma B^\top q) \right)^\top R \left((R + \gamma B^\top P B)^{-1}(\gamma B^\top PAx + \gamma B^\top q) \right) \\
    &\quad + \gamma [ (Ax - B \left((R + \gamma B^\top P B)^{-1}(\gamma B^\top PAx + \gamma B^\top q) \right))^\top P \\
    &\quad \quad (Ax - B \left((R + \gamma B^\top P B)^{-1}(\gamma B^\top PAx + \gamma B^\top q) \right))^\top] \\
    &\quad + \gamma \left(2q^\top (Ax - B \left((R + \gamma B^\top P B)^{-1}(\gamma B^\top PAx + \gamma B^\top q) \right))\right) + \gamma r \\
    &= x^\top Qx + 2x^\top(-Qx^*) + (x^*)^\top Qx^* \\
    &\quad + (\gamma B^\top P Ax + \gamma B^\top q)^\top (R + \gamma B^\top PB)^{-1}(\gamma B^\top P Ax + \gamma B^\top q) \\
    &\quad - \gamma \left[2(Ax)^\top P(B\left((R + \gamma B^\top P B)^{-1}(\gamma B^\top PAx + \gamma B^\top q) \right)) \right] \\
    &\quad + \gamma x^\top A^\top PAx \\
    &\quad + 2\gamma \left[q^\top Ax - q^\top (B\left((R + \gamma B^\top P B)^{-1}(\gamma B^\top PAx + \gamma B^\top q) \right)) \right] + \gamma r \\
    &= x^\top Qx + 2x^\top (-Qx^*) + (x^*)^\top Qx^* \\
    &\quad - (\gamma B^\top PAx + \gamma B^\top q)^\top (R + \gamma B^\top PB)^{-1}(\gamma B^\top PAx + \gamma B^\top q) \\
    &\quad + \gamma x^\top A^\top PAx + 2\gamma q^\top Ax + \gamma r
\end{align*}
Collecting terms, we find that 
\begin{align*}
    &P = Q - (\gamma B^\top PA)^\top (R + \gamma B^\top PB)^{-1} (\gamma B^\top PA) + \gamma A^\top PA, \\
    &q = -Qx^* - (\gamma B^\top PA)^\top(R + \gamma B^\top PB)^{-1} (\gamma B^\top q) + \gamma A^\top q, \\
    &r = \frac{1}{1-\gamma} \left((x^*)^\top Qx^* - (\gamma B^\top q)^\top (R + \gamma B^\top P B)^{-1} (\gamma B^\top q) \right).
\end{align*}
Finally, observe that after rearranging the equation involving $q$, 
\begin{align*}
    q = -(I - \gamma (A+BK)^\top)^{-1}Qx^*.
\end{align*}
Plugging this back into the optimal formula for $u^*$ then yields our desired result for the case with fixed $x_0$. Since the optimal controller is independent of the initial state $x_0$, and the stochasticity in the population cost $J$ is over the initial state $x_0$, this shows that the same controller is optimal in minimizing $J$.
\end{proof}

\section{Properties of LQR tracking}

\subsection{Non-convexity}
\label{appendix:non-convexity}
We provide here a simple example, $n = 2$ ( here $n$ is state dimension), showing that $J(K)$, the zero-target LQR cost, is non-convex. By extension $J(K,\tilde{K})$ cannot be convex in general as well.
\begin{proposition}
Consider the zero-target LQR cost
\[J(K) = \sum_{t=0}^{\infty} x_t^\top Q x_t + u_t^\top R u_t,\]
where $u_t = Kx_t$, and $x_{t+1} = Ax_t + Bu_t$, where $x_t \in \bbR^n$ and $u_t \in \bbR^k$. Then $J(K)$ is in general non-convex for $n \geq 2$.
\end{proposition}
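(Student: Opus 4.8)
The plan is to prove non-convexity by exhibiting an explicit pair of stabilizing controllers whose average destabilizes the closed loop, and then invoking the elementary fact that a convex function cannot take the value $+\infty$ at a convex combination of two points where it is finite.

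First I would record the crucial dichotomy: for $Q, R \succ 0$ (and, in the expectation-over-$x_0$ version, $\Sigma \succ 0$), the cost $J(K)$ is finite if and only if the closed-loop matrix $M := A + BK$ is stable, i.e.\ $\rho(M) < 1$. Writing $x_t = M^t x_0$, the running cost is $\sum_{t \geq 0} x_t^\top (Q + K^\top R K) x_t$; when $\rho(M) < 1$ the trajectory decays geometrically and the series converges, whereas when $\rho(M) > 1$ the positive-definiteness of $Q$ forces the terms to grow without bound, so $J(K) = +\infty$. Hence the effective domain of $J$ is exactly the stability region $\mathcal{S} := \{K : \rho(A+BK) < 1\}$. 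I would then use that if $J$ were convex (with value $+\infty$ off $\mathcal{S}$), its effective domain would have to be convex: for $K, K' \in \mathcal{S}$, convexity gives $J(\tfrac12(K+K')) \leq \tfrac12 J(K) + \tfrac12 J(K') < \infty$, so $\tfrac12(K+K') \in \mathcal{S}$. Thus it suffices to show $\mathcal{S}$ is not convex, i.e.\ to find two stable closed-loop matrices whose average is unstable.

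For the counterexample I would take $n = k = 2$, $A = 0$, $B = I_2$, and $Q = R = I_2$, so that the closed loop is simply $M = K$ and $\mathcal{S} = \{K : \rho(K) < 1\}$; note $(A,B) = (0, I_2)$ is controllable. Setting
$$K = \begin{bmatrix} 1/2 & 2 \\ 0 & 1/2 \end{bmatrix}, \qquad K' = \begin{bmatrix} 1/2 & 0 \\ 2 & 1/2 \end{bmatrix},$$
both matrices are triangular with eigenvalues $1/2$, so $\rho(K) = \rho(K') = 1/2 < 1$ and $J(K), J(K')$ are finite. Their average
$$\tfrac12(K+K') = \begin{bmatrix} 1/2 & 1 \\ 1 & 1/2 \end{bmatrix}$$
is symmetric with eigenvalues $1/2 \pm 1 = \{3/2,\, -1/2\}$, so $\rho = 3/2 > 1$ and $J(\tfrac12(K+K')) = +\infty$. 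This contradicts the convexity inequality, establishing non-convexity for $n = 2$. For $n > 2$ I would place this $2\times2$ block as a principal block and pad the remaining coordinates with a fixed stable block (e.g.\ $\tfrac12 I$) so that the same obstruction persists, and since $J(K,g)$ restricts to $J(K,0) = J(K)$, non-convexity of $J(K)$ immediately gives non-convexity of $J(K,g)$.

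The main obstacle is not the algebra—the eigenvalue computations are immediate—but rather stating the finiteness-versus-stability dichotomy precisely enough to license the convexity step, particularly in the expectation formulation, where one must argue that $\Sigma \succ 0$ together with $\rho(M) > 1$ forces $\bbE[J] = \tr\!\big(\sum_t (Q+K^\top R K)\, M^t \Sigma (M^t)^\top\big) = +\infty$. The delicate boundary case $\rho(M) = 1$ is deliberately sidestepped by choosing the midpoint strictly unstable ($\rho = 3/2$), so that no Jordan-structure analysis at the unit circle is needed.
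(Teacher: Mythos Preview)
Your proposal is correct and takes essentially the same approach as the paper: both exhibit two triangular stabilizing controllers (hence with small spectral radius read off the diagonal) whose average is a symmetric matrix with spectral radius exceeding $1$, and conclude non-convexity from the fact that the effective domain of $J$ would have to be convex. The paper uses $A=-\tfrac12 I$, $B=I$ and off-diagonal entry $4040$, while you use $A=0$, $B=I$ and off-diagonal entry $2$; your choice is a bit tidier and you spell out the finiteness/stability dichotomy and the $n>2$ padding more explicitly, but the underlying idea is identical.
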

\begin{proof}
We provide a counterexample where the set of stable controllers $K$ is not convex. Pick
\[A = \begin{bmatrix}
1 & 0 \\
0 & 1
\end{bmatrix}, \quad B = \begin{bmatrix}
1 & 0 \\
0 & 1
\end{bmatrix}, \quad K_1 = \begin{bmatrix}
-1.5 & 4042 \\
0 & -1.5
\end{bmatrix}, \quad
K_2 = \begin{bmatrix}
-1.5 & 0\\
4040 & -1.5
\end{bmatrix}.
\]
Then, $(A,B)$ is a controllable system. In addition, observe that
\[ 
\rho (A+BK_1) = \rho\left(\begin{bmatrix}
-0.5 & 4042 \\
0 & -0.5
\end{bmatrix} \right) < 1, \quad \rho (A+BK_2) = \rho\left(\begin{bmatrix}
-0.5 & 0 \\
4042 & -0.5
\end{bmatrix} \right) < 1.
\]
However,
\[ 
\rho\left(A + B\left(\frac{K_1 + K_2}{2} \right)\right) = 
\rho\left(\begin{bmatrix}
-0.5 & 2021 \\
2021 & -0.5
\end{bmatrix}  \right) > 1,
\]
which can be shown via direct computation. This proves that LQR is in general non-convex for $n \geq 2$.
\end{proof}

\subsection{Gradient dominance} \label{appendix:gradient-dominance}
In this subsection, we fill in the gaps in our earlier proof sketch of Proposition \ref{proposition:gradient dominance of lqr tracking}. We denote $g := \tilde{K}x^*$. For notational convenience, we interchangeably refer to a policy given by $u_t = Kx_t + \tilde{K}x^*$ as $(K,g)$ or $(K,\tilde{K})$. The corresponding cost-to-go-function is  $V_{K,\tilde{K}} := V_{K,g}: \bbR^n \to \bbR_{\geq 0}$, where for any $z \in \bbR^n$,
\begin{align*}
    & V_{K,\tilde{K}}(z) := V_{K,g} (z) =  \sum_{t = 0}^{\infty} \gamma^t c(x_t, Kx_t+ g), \quad \mbox{where } x_0 = z.
\end{align*}
We now state and prove a lemma describing the value function of $J$ that we required in the proof sketch of Proposition \ref{proposition:gradient dominance of lqr tracking}.
\begin{lemma}[Value function of $J$]
\label{lemma:value function of J(K,g)}
Suppose $\sqrt{\gamma} \rho((A + BK))< 1$. Define $g := \tilde{K}x^*$. Then,
\begin{align*}
    V_{K,\tilde{K}}(z) := V_{K,g}(z) = z^\top P_K z + 2z^\top q_{K,g} + r_{K,g}, 
\end{align*}
where 
\begin{align*}
    &P_K = Q + K^\top RK + \gamma (A+BK)^\top P_K(A+BK), \\
    &q_{K,g} = (I - \gamma(A+BK)^\top)^{-1}\left( -Qx^* + K^\top Rg + \gamma (A+BK)^\top P_K Bg \right), \\
    &r_{K,g} = \frac{1}{1-\gamma} \left((x^*)^\top Qx^* + g^\top Rg + \gamma \left(g^\top B^\top P_K Bg
+ 2g^\top B^\top q_{K,g} \right) \right),
\end{align*}
and we can also rewrite $q_{K,g}$ as $q_{K,g} := q_{K,\tilde{K}}x^*$, where
\begin{align*}
    q_{K,\tilde{K}} = (I - \gamma(A+BK)^\top)^{-1}\left( -Q + K^\top R\tilde{K} + \gamma (A+BK)^\top P_K B\tilde{K} \right).
\end{align*}
\end{lemma}
\begin{proof}
We note that direct calculations show that $V_{K,g}(z) = z^\top P_{K}z + 2z^\top q_{K,g} + r$ for some $P_K, q_{K,g},$ and $r_{K,g}$. We will seek to find a recursive formula for $P_K, q_{K,g}$ and $r_{K,g}$. Since $V_{K,g}(z) = z^\top P_K z + 2z^\top q_{K,g} + r_{K,g}$, 
\begin{align*}
    V_{K,g}(z) &= z^\top P_K z + 2z^\top q_{K,g} + r_{K,g} \\
    &= c(z, Kz + g) + \gamma V_{K,g}(Az + B(Kz + g) ) \\
    &= (z - x^*)^\top Q(z - x^*) + (Kz +g)^\top R (Kz + g) \\
    &\quad + \gamma (A +BK)z + Bg)^\top P_K((A+ BK)z + Bg) + 2((A+BK)z + Bg )^\top q_{K,g} + r_{K,g} \\
    &= z^\top (Q + K^\top RK + (A+BK)^\top P_K(A+BK))z \\
    &\quad + 2z^\top (-Qx^* + K^\top Rg + (A+BK)^\top P_K Bg + (A+BK)^\top q_{K,g}) \\
    &\quad + (x^*)^\top x^* + g^\top R g + \gamma g^\top B^\top P_K Bg + 2g^\top B^\top q_{K,g} + r_{K,g}.
\end{align*}
Matching coeffients, we find that
\begin{align*}
    &P_K = Q + K^\top RK + \gamma (A+BK)^\top P_K(A+BK), \\
    &q_{K,g} = -Qx^* + K^\top Rg + \gamma (A+BK)^\top P_K Bg + \gamma (A+BK)^\top q_{K,g}, \\
    &r_{K,g} = (x^*)^\top Qx^* + g^\top R g + \gamma \left[g^\top B^\top P_K Bg + 2g^\top B^\top q_{K,g} + r_{K,g} \right].
\end{align*}
Some algebraic simplifications for the equations involving $q_{K,g}$ and $r_{K,g}$ then yield the result.
\end{proof}

Next we compute $\nabla J$, which we also used in proving Proposition \ref{proposition:gradient dominance of lqr tracking}.
\begin{lemma}
\label{lemma:gradient of J}
Define $$C_{K} = RK + \gamma B^\top P_K(BK + A), \quad d_{K,\tilde{K}} = R\tilde{K}+ \gamma B^\top P_KB\tilde{K}+ \gamma B^\top q_{K,\tilde{K}}.$$
The gradients of $J$ with respect to $K$ and $\tilde{K}$ are the following:
\begin{align*}
    &\nabla_K J  = 2C_K\Sigma_{K,\tilde{K}} + 2d_{K,\tilde{K}}x^* \rho_{K,\tilde{K}}^\top,\quad \nabla_{\tilde{K} } J = \left(2C_K\rho_{K,\tilde{K}} + 2\beta_{\gamma}d_{K,\tilde{K}}x^* \right)(x^*)^\top,
\end{align*}
where
\begin{align*}
    \Sigma_{K,\tilde{K}} := \mathbb{E} \left[\sum_{t=0}^{\infty} \gamma^t x_t x_t^\top\right], \quad \rho_{K,\tilde{K}} := \mathbb{E}\left[\sum_{t=0}^{\infty} \gamma^t x_t \right], \quad \beta_{\gamma} := \sum_{t=0}^{\infty} \gamma^t. 
\end{align*}
\end{lemma}
\begin{proof}
Observe that for any $z \in \bbR^n$, 
\begin{align*}
    V_{K,g}(z) &= c(z, Kz + g) + \gamma V((A+BK)z + Bg) \\
    &= (z - x^*)^\top Q(z - x^*) + (Kz + g)^\top R(Kz + g) \\
    & \quad + \gamma \left(((A+BK)z + Bg)^\top P_{K}((A+BK)z + Bg) + 2((A+BK)z + Bg)^\top q_{K,g} + r_{K,g}\right).
\end{align*}
Taking the gradient first with respect to $K$, using the product rule we find that 
\begin{align*}
    \nabla_K V_{K,g}(z) &= 2R(Kz + g) z^\top + 2\gamma \left(B^\top P_{K}((A+BK)z + Bg)z^\top +  B^\top q_{K,g}z^\top \right) \\ & \quad + \gamma \nabla_K V_{K,g}(x_1)\big|_{x_1 = (A+BK)z + Bg}.
\end{align*}
Using recursion, and taking expectations, we find that 
\begin{align*}
    \nabla_K J &= \mathbb{E} \left[\sum_{t=0}^{\infty} 2\gamma^t \left((RK + \gamma B^\top P_{K}(A+BK)) x_t x_t^\top + (Rg + \gamma B^\top P_{K}Bg + B^\top q_{K,g})x_t^\top \right)\right]
\end{align*}
We next compute the gradient with respect to $g$. Observe that
\begin{align*}
    \nabla_g V_{K,g}(z) &= 2R(Kz + g) + 2\gamma B^\top P_K((A+BK)z + Bg) + 2\gamma B^\top q_{K,g} +\gamma \nabla_g V(x_1) \big|_{x_1 = (A+BK)z + Bg}.
\end{align*}
Using recursion, and taking expectations, we find that
\begin{align*}
    \nabla_g J &= 2 \mathbb{E} \left[\sum_{t=0}^{\infty} \gamma^t \left( (RK + \gamma B^\top P(A+BK))x_t + Rg + \gamma B^\top P_K Bg + \gamma B^\top q_{K,g} \right)\right].
\end{align*}

Since $g = \tilde{K}x^*$, it follows then that 
\begin{align*}
    \nabla_{\tilde{K} } J &= 2 \mathbb{E} \left[\sum_{t=0}^{\infty} \gamma^t \left( (RK + \gamma B^\top P(A+BK))x_t + Rg + \gamma B^\top P_K Bg + \gamma B^\top q_{K,g} \right)\right](x^*)^\top.
\end{align*}

\end{proof}

We define the advantage function $A_{K,g}(x, u)$ as
\begin{align*}
    A_{K,g}(x,u) := c(x,u) + \gamma V_{K,g}(Ax + Bu ) - V_{K,g}(x).
\end{align*}

For any two policies $(K,g)$ and $(K',g')$,  the next lemma provides an expression for the difference between $J(K,g)$ and $J(K',g')$ in terms of the advantage function. This played a key role in the proof sketch of Proposition \ref{proposition:gradient dominance of lqr tracking}.
\begin{lemma}
\label{lemma:cost-difference}
Note that any two policies $(K, \tilde{K})$ and $(K', \tilde{K}')$ correspond to policies $(K,g)$ and $(K,g')$, where $u_t = Kx_t +g$ and $u_t' = K'x_t' + g'$, and $g =\tilde{K}x^*, g' = \tilde{K}'x^*$. Let $\{(x_t,u_t)\}$ and $\{(x_t',u_t')\}$ denote the state-action trajectories corresponding to $(K,g)$ and $(K',g')$ respectively. Then, overloading notation to so that $J(K,\tilde{K}) := J(K,g)$,
\begin{align*}
    J(K,\tilde{K}) - J(K',\tilde{K}') = J(K,g) - J(K',g') &= - \mathbb{E} \sum_{t=0}^{\infty} A_{K,g}(x_t', K'x_t' + g').
\end{align*}
Moreover, defining
\begin{align*}
    &C_K = RK + \gamma B^\top P(BK + A), \quad d_{K,\tilde{K}} = R\tilde{K}+ \gamma B^\top PB\tilde{K} + \gamma B^\top q_{K,\tilde{K}}, \\
    & \Sigma^{\mathrm{opt}}  = \mathbb{E} \left[\sum_{t=0}^{\infty} \gamma^t x_t^{\mathrm{opt}} (x_t^{\mathrm{opt}})^\top \right], \quad \rho^{\mathrm{opt}} = \mathbb{E}\left[\sum_{t=0}\gamma^t x_t^{\mathrm{opt}} \right], \quad \beta_\gamma = \sum_{t=0}^{\infty} \gamma^t,
\end{align*}
for an optimal policy $(K^{\mathrm{opt}}, \tilde{K}^{\mathrm{opt}})$, we have
\begin{align*}
    &J(K,\tilde{K}) - J(K^{\mathrm{opt}}, \tilde{K}^{\mathrm{opt}}) \leq \frac{\Psi^{\mathrm{opt}}}{\sigma_{\min}(R)} \left( \norm{C_K}^2 + \norm{d_{K,\tilde{K}}x^*}^2 \right),
\end{align*}
where 
\begin{align*}
\Psi^{\mathrm{opt}} :=  \max\left\{\norm{\Sigma^{\mathrm{opt}}}_2 + \norm{\rho^{\mathrm{opt}}}_2, \sum_{t=0}^{\infty}\gamma^t + \norm{\rho^{\mathrm{opt}}}_2\right\}.
\end{align*}
\end{lemma}

\begin{proof}

First consider the difference in cost-to-go, starting from the initial state $x_0$. 
\begin{align*}
    V_{K',g'}(x_0) - V_{K,g}(x_0) &=  \sum_{t=0}^{\infty} \gamma^t c(x_t',u_t') - V_{K,g}(x_0) \\
    &= \sum_{t=0}^{\infty} \gamma^t \left(c(x_t', u_t') + V_{K,g}(x_t') - V_{K,g}(x_t')\right) - V_{K,g}(x_0) \\
    &=  \sum_{t=0}^{\infty} \gamma^t \left(c(x_t',u_t') + \gamma V_{K,g}(x_{t+1}') - V_{K,g}(x_t') \right).
\end{align*}
For the last line, we utilized the fact that $x_0 = x_0'$.

Then, we get that
\begin{align*}
    V_{K',g'}(x_0) - V_{K,g}(x_0) &= \sum_{t = 0}^{\infty} \gamma^t A_{K,g}(x_t', u_t') \\
\end{align*}
We next compute $A_{K,g}(x, K'x+g')$. Recall that 
\begin{align}
    V_{K,g}(x) &= (x - x^*)^\top Q(x - x^*) + (Kx + g)^\top R(Kx+ g) + \gamma V_{K,g}((A+BK)x + Bg).
    \label{eq:V_Kg_recursive_expression}
\end{align}
Then, observe that 
\begin{align*}
    &A_{K,g}(x, K'x + g') \\
    &= c(x,K'x + g') + \gamma V_{K,g}(Ax + B(K' x+ g') ) - V_{K,g}(x) \\
    &= (x - x^*)^\top Q(x - x^*) + (K'x + g')^\top R(K' x+ g') \\
    & \quad + \gamma V_{K,g}(Ax + B(K' x+ g')) - V_{K,g}(x) \\
    &= x^\top (Q + (K')^\top RK')x + 2x^\top \left( -Qx^* + (K')^\top Rg' \right) + g'^\top R g' + (x^*)^\top Qx^*\\
    & \quad + \gamma \left((A+BK')x + Bg' \right)^\top P_K \left((A+BK')x + Bg' \right) \\
    & \quad + 2 \gamma \left( (A+BK')x + Bg' \right)^\top q_{K,g}  + \gamma r_{K,g}  - V_{K,g}(x) \\
    &= x^\top \left(Q + (K + K' - K) ^\top R(K + K' - K)\right) x  + (x^*)^\top Qx^*\\
    &\quad + 2x^\top \left(-Qx^* + (K + K' - K)^\top R (g + g' - g) \right) + (g + g' - g)^\top R (g + g' - g) \\
    &\quad + \gamma \left((A + B(K + K'-K))x + B(g + g' - g) \right)^\top P_K \left((A + B(K + K'-K))x + B(g + g' - g) \right)\\
    &\quad + 2\gamma \left((A +B(K + K'- K))x + B(g + g'-g) \right)^\top q_{K,g} + \gamma r_{K,g} - V_{K,g}(x) \\
    &\labelrel={eq:plug_in_Vkg} x^\top \left(2 (K' - K)^\top R K + 
    (K' - K)^\top R (K' - K)\right)x + 2x^\top \left((K' - K)^\top Rg' + K^\top R(g' - g)\right) \\
    & \quad +2\left( (g'-g)^\top R g\right) + (g' - g)^\top R(g' - g) \\ 
    &\quad + \gamma \left[ 2x^\top\left(B(K'-K)\right)^\top P_K(A+BK)x + x^\top \left( B(K'-K)\right)^\top P_K \left(B(K'-K)\right)x\right] \\
    & \quad + \gamma \left[2 x^\top \left(B(K'-K)\right)^\top P_K(Bg') + 2x^\top (A+BK)^\top P_K(B(g' - g))\right] \\
    & \quad + \gamma \left[2x^\top(B(K'- K))^\top q_{K,g}\right] \\
    &\quad + \gamma \left[2(B(g'-g))^\top P_K(Bg) + (B(g'-g))^\top P_K(B(g'-g)) + 2 (B(g'-g))^\top q_{K,g}\right] \\
    &= 2 \tr (xx^\top (K'- K)^\top RK) + 2 \tr (x^\top (K'-K)^\top R(g'-g)) + 2 \tr (x^\top (K' - K)^\top Rg ) \\
    & \quad + 2 \gamma \tr (x^\top (K' - K)^\top B^\top P_K (BK + A)x) + 2 \gamma \tr (x^\top (K' - K)^\top B^\top P_K B(g'-g))\\
    & \quad + 2\gamma \tr(x^\top (K' - K)^\top B^\top P_K Bg) + 2\gamma \tr (x^\top (K' - K)^\top B^\top q_{K,g}) \\
    & \quad + \tr (xx^\top (K' - K)^\top (R + \gamma B^\top P_K B) (K'- K)) \\
    & \quad + 2 \tr (x^\top K^\top R (g' - g)) + 2 \tr (g^\top R (g' - g)) + 2 \gamma \tr((g' - g)^\top B^\top P_K(A +BK)x) \\
    & \quad + 2 \gamma \tr ((g' - g)^\top B^\top P_K B g) + 2 \gamma \tr ((g' - g)^\top B^\top q_{K,g}) \\
    & \quad + (g' - g)^\top (R + \gamma B^\top P_K B)(g' - g) \\
    &= 2 \tr((K'-K)^\top (RK + B^\top P_K(BK+A))xx^\top) \\
    &\quad + 2\tr((K'-K)^\top (Rg + \gamma B^\top P_K Bg + \gamma B^\top q_{K,g})x^\top) \\
    &\quad + 2\tr((g'-g)^\top ((RK + \gamma B^\top P_K (A+BK))x + Rg \gamma B^\top P_KBg + \gamma B^\top q_{K,g}) \\
    &\quad + 2 \tr((K'-K)^\top (R + \gamma B^\top P_K B)(g'-g) x^\top) \\
    &\quad + \tr(xx^\top(K'-K)^\top (R + \gamma B^\top P_K B)(K'-K)) + (g'-g)^\top (R + \gamma B^\top P_K B)(g'-g) \\
    &= 2 \tr((K'-K)^\top (C_K x + d_{K,\tilde{K}}x^*)x^\top) + 2\tr((g'-g)^\top (C_K x + d_{K,\tilde{K}}x^*)) \\
    &\quad + 2 \tr((K'-K)^\top (R + \gamma B^\top P_K B)(g'-g) x^\top) \\
    &\quad + \tr(xx^\top(K'-K)^\top (R + \gamma B^\top P_K B)(K'-K)) + (g'-g)^\top (R + \gamma B^\top P_K B)(g'-g)  
\end{align*}
Above, \eqref{eq:plug_in_Vkg} holds by plugging in the expression for $V_{K,g}(x)$ in Line \eqref{eq:V_Kg_recursive_expression}.
For the last line, we defined 
$$C_K := RK + \gamma B^\top P_K (BK + A), \quad d_{K,\tilde{K}} := R\tilde{K} + \gamma B^\top P_K B\tilde{K} + \gamma B^\top q_{K,\tilde{K}}.$$

Completing the square, using the fact that $(R + \gamma B^\top P_K B)$ is positive definite, we find that the advantage function satisfies the following inequality: 
\begin{align}
    &A_{K,g}(x, K'x + g') \nonumber \\
    &= \left((K' - K)x +  (g' - g) + (R+\gamma B^\top P_K B)^{-1} \left(C_Kx + d_{K,\tilde{K}}x^*\right) \right)^\top(R + \gamma B^\top P_K B) \nonumber \\
    & \quad \left((K' - K)x +  (g' - g) + (R+\gamma B^\top P_K B)^{-1} \left(C_Kx + d_{K,\tilde{K}}x^*\right) \right) \nonumber \\
    & \quad - (C_Kx + d_{K,\tilde{K}}x^*)^\top (R + \gamma B^\top P_K B)^{-1} (C_Kx + d_{K,\tilde{K}}x^*) \label{eq:A_Kg_equality}\\
    &\geq - (C_Kx + d_{K,\tilde{K}}x^*)^\top (R + \gamma B^\top P_K B)^{-1} (C_Kx + d_{K,\tilde{K}}x^*) \label{eq:A_Kg_inequality}
\end{align}
Using \eqref{eq:A_Kg_inequality}, and taking expectations, we find that 
\begin{align*}
    J(K',\tilde{K}') - J(K,\tilde{K}) &\geq - \mathbb{E}\left[\sum_{t=0}^{\infty} \gamma^t \left((C_Kx_t' + d_{K,\tilde{K}}x^*)^\top (R + \gamma B^\top P_K B)^{-1} (C_Kx_t' + d_{K,\tilde{K}}x^*)\right)\right].
\end{align*}
Applying this to an optimal policy $(K^{\mathrm{opt}}, \tilde{K}^{\mathrm{opt}})$, we find that
\small
\begin{align*}
    & J(K,\tilde{K}) - J(K^{\mathrm{opt}}, \tilde{K}^{\mathrm{opt}})\\
    &\leq \mathbb{E}\left[\sum_{t=0}^{\infty} \gamma^t \left((C_Kx_t^{\mathrm{opt}} + d_{K,\tilde{K}}x^*)^\top (R + \gamma B^\top P_K B)^{-1} (C_Kx_t^{\mathrm{opt}} + d_{K,\tilde{K}}x^*)\right)\right] \\
    &\leq \frac{1}{\sigma_{\min}(R)} \mathbb{E}\left[\sum_{t=0}^{\infty} \gamma^t \left((C_Kx_t^{\mathrm{opt}} + d_{K,\tilde{K}}x^*)^\top (C_Kx_t^{\mathrm{opt}} + d_{K,\tilde{K}}x^*)\right)\right] \\
    &= \frac{1}{\sigma_{\min}(R)} \left( \tr\left(C_K^\top C_K\mathbb{E}\left[\sum_{t=0}^{\infty} \gamma^t x_t^{\mathrm{opt}}(x_t^{\mathrm{opt}})^\top\right]\right) + 2 \tr\left(\left(\mathbb{E} \left[\sum_{t=0}^{\infty} \gamma^t x_t^{\mathrm{opt}}\right]\right)^\top C_K^\top d_{K,\tilde{K}}x^*\right) + \sum_{t=0}^{\infty} \gamma^t \norm*{d_{K,\tilde{K}}x^*}^2 \right).
\end{align*}
\normalsize
Upon some algebraic manipulation we can then obtain the final result.
\end{proof}

We can now provide the full statement of the gradient dominance of the LQR  tracking problem. 
\begin{proposition}[Gradient dominance of LQR tracking]
\label{proposition: gradient dominant single agent}
Define 
$$\Psi^{\mathrm{opt}} =  \max\left\{\norm{\Sigma^{\mathrm{opt}}}_2 + \norm{\rho^{\mathrm{opt}}}_2, \sum_{t=0}^{\infty}\gamma^t + \norm{\rho^{\mathrm{opt}}}_2\right\}, $$
and
$$\Sigma^{\mathrm{opt}} := \mathbb{E} \sum_{t=0}^{\infty} \gamma^t (x_t)^{\mathrm{opt}}((x_t)^{\mathrm{opt}})^\top, \quad
\rho^{\mathrm{opt}} :=  \mathbb{E} \sum_{t=0}^{\infty} \gamma^t (x_t)^{\mathrm{opt}}
$$
are the discounted state covariance matrix and mean vector respectively corresponding to an optimal policy for $J$, and $\{x_t^{\mathrm{opt}}\}$ denotes the trajectory followed by the optimal policy.
Suppose 
$$\mathbb{E}[x_0] = 0, \quad \mathbb{E}[x_0 x_0^\top] = \Sigma \succeq \alpha I  \mbox{ for some } \alpha > 0, \quad  \abs{\rho(A+BK)} < 1.$$
There are two cases to consider. The first is when $x^* = 0$. In this case,
\begin{align*}
    J(K, \tilde{K}) - J^{\mathrm{opt}} \leq \frac{\Psi^{\mathrm{opt}}}{4\alpha^2 \sigma_{\min}(R)} \norm*{\nabla J(K, \tilde{K})}_F^2.
\end{align*}
The second is when $x^* \neq 0$. Then, 
\begin{align*}
    J(K, \tilde{K}) - J^{\mathrm{opt}} \leq \frac{\Psi^{\mathrm{opt}}}{4\min\{\alpha^2, \norm*{x^*}^2\} \sigma_{\min}(R)} \norm*{\nabla J(K, \tilde{K})}_F^2.
\end{align*} 
Moreover, we have the following lower bound
\begin{align*}
    J(K,\tilde{K}) - J^{\mathrm{opt}} \geq \frac{\min\{\alpha,1\}}{\norm{R+B^\top P_K B}_2} \left(\norm{C_K}_F^2 + \norm{d_{K,\tilde{K}}x^*}_F^2 \right).
\end{align*}

\end{proposition}
\begin{proof}
Our earlier proofs of Lemmas \ref{lemma:value function of J(K,g)}, \ref{lemma:gradient of J} and \ref{lemma:cost-difference} filled in the gaps in our earlier proof sketch of Proposition \ref{proposition:gradient dominance of lqr tracking}. It remains for us to prove the lower bound. Consider picking $$K' = K - (R + \gamma B^\top P_K B)^{-1} C_K, \quad \tilde{K}' = \tilde{K} - (R+\gamma B^\top P_K B)^{-1} d_{K,\tilde{K}}.$$ For notational simplicity, we introduce the following definitions:
\begin{align*}
    E_{K,\tilde{K}} = \begin{bmatrix}
    C_K & d_{K,\tilde{K}}x^*
    \end{bmatrix}, \quad N_{K,\tilde{K}} = \sum_{t=0}^{\infty} \gamma^t \begin{bmatrix}
    \bbE[(x_t') (x_t')^\top] & \bbE[x_t'] \\
    \bbE[(x_t')^\top] & 1
    \end{bmatrix}.
\end{align*}
Then, since $J(K,\tilde{K}) \geq J^{\mathrm{opt}}$, it follows by the equality in \eqref{eq:A_Kg_equality} and our choice of $K',\tilde{K}'$ that
\begin{align*}
    J(K,\tilde{K}) - J^{\mathrm{opt}} &\geq J(K,\tilde{K}) - J(K',\tilde{K}') \\
    &= -\bbE\left[\sum_{t=0}^{\infty} A_{K,g}(x_t', K'x_t' + \tilde{K}'x^*) \right] \\
    &= \bbE\left[\sum_{t=0}^{\infty} \gamma^t (C_K x_t' + d_{K,\tilde{K}}x^*)^\top (R + \gamma B^\top P_K B)^{-1} (C_K x_t' + d_{K,\tilde{K}}x^*) \right] \\
    &= \tr \left(E_{K,\tilde{K}}^\top (R + \gamma B^\top P_K B)^{-1} E_{K,\tilde{K}} N _{K,\tilde{K}}) \right) \\
    &\geq \frac{\min\{\alpha,1\}}{\norm{R+B^\top P_K B}_2}\tr(E_{K,\tilde{K}}^\top E_{K,\tilde{K}}) \\
    &= \frac{\min\{\alpha,1\}}{\norm{R+B^\top P_K B}_2} \left(\norm{C_K}_F^2 + \norm{d_{K,\tilde{K}}x^*}_F^2 \right).
\end{align*}

In obtaining the final inequality, we used the fact that 
\begin{align*}
    N_{K,\tilde{K}} \succeq \begin{bmatrix}
    \bbE[x_0 x_0^\top] & 0 \\
    0 & 1
    \end{bmatrix} \succeq \min\{\alpha,1\}I,
\end{align*}
where the first inequality follows from the same argument we had used for the $M_{K,\tilde{K}}$ term in the proof sketch of Proposition \ref{proposition:gradient dominance of lqr tracking}, and the second comes from our assumption that $\bbE[x_0 x_0^\top] \succeq \min\{\alpha,1\}I.$
\end{proof}

\subsection{Local smoothness} \label{appendix:local-smoothness}

\subsubsection{Outline}

Suppose $(K,\tilde{K})$ is in a sublevel set $\mathcal{G}_C$ of $J$ for some $C > 0$, and let $(K',\tilde{K}')$ denote another policy. We start with the following observation on the cost difference between $(K, \tilde{K})$ and $(K',\tilde{K}')$ for a single agent LQR tracking problem, using our computations from Lemma \ref{lemma:cost-difference}. Recall that, defining $g' = \tilde{K}'x^*$, and $g = \tilde{K}x^*$, the advantage function
$$A_{K,g}(x,u) = c(x,u) + \gamma V_{K,g}(Ax+Bu) - V_{K,g}(x)$$
satisfies 
\begin{align*}
    &A_{K,g}(x,K'x + g') \\
    &= 2 \tr((K'-K)^\top (C_K x + d_{K,\tilde{K}}x^*)x^\top) + 2\tr((g'-g)^\top (C_K x + d_{K,\tilde{K}}x^*)) \\
    &\quad + 2 \tr((K'-K)^\top (R + \gamma B^\top P_K B)(g'-g) x^\top) \\
    &\quad + \tr(xx^\top(K'-K)^\top (R + \gamma B^\top P_K B)(K'-K)) + (g'-g)^\top (R + \gamma B^\top P_K B)(g'-g) 
\end{align*}
As a result, applying the same analysis from Lemma \ref{lemma:cost-difference} and taking expectations,
\begin{align}
    &J(K',g') - J(K,g) \nonumber \\
    &= \bbE\left[\sum_{t=0}^{\infty} \gamma^t A_{K,g}(x_t, K'x_t + g')\right] \nonumber\\
    &= 2 \tr\left((K'-K)^\top \left(C_K \bbE\left[\sum_{t=0}^{\infty} \gamma^t x_t'x_t' \right]  + d_{K,\tilde{K}}x^* \bbE\left[\sum_{t=0}^{\infty} \gamma^t (x_t')^\top\right]\right)\right) \nonumber\\
    &\quad + 2\tr\left((g'-g)^\top \left(C_K \bbE\left[\sum_{t=0}^{\infty} \gamma^t x_t'\right] + d_{K,\tilde{K}}x^*\right)\right) \nonumber\\
    &\quad + 2 \tr\left((K'-K)^\top (R + \gamma B^\top P_K B)(g'-g) \bbE\left[\sum_{t=0}^{\infty} \gamma^t (x_t')^\top\right]\right) \nonumber\\
    &\quad + \tr\left(\bbE\left[\sum_{t=0}^{\infty} \gamma^t x_t'x_t' \right](K'-K)^\top (R + \gamma B^\top P_K B)(K'-K)\right) + (g'-g)^\top (R + \gamma B^\top P_K B)(g'-g) \nonumber\\
    &= 2 \tr((K'-K)^\top (C_K \Sigma_{K',\tilde{K}'}  + d_{K,\tilde{K}}x^* \rho_{K',\tilde{K}'}^\top)) \nonumber\\
    &\quad + 2\tr((g'-g)^\top (C_K \rho_{K',\tilde{K}'} + d_{K,\tilde{K}}x^*)) \nonumber\\
    &\quad + 2 \tr((K'-K)^\top (R + \gamma B^\top P_K B)(g'-g) \rho_{K',\tilde{K}'}^\top) \nonumber\\
    &\quad + \tr(\Sigma_{K',\tilde{K}'}(K'-K)^\top (R + \gamma B^\top P_K B)(K'-K)) + (g'-g)^\top (R + \gamma B^\top P_K B)(g'-g). \label{eq:value_diff_eq_intermediate_for_smoothness}
\end{align}
We note that $\{x_t'\}$ refers to the trajectory obtained by the following the policy $u = K'x + \tilde{K}'x^*$. To obtain the last equality, we recall the following definitions:
\begin{align*}
    \Sigma_{K',\tilde{K}'} := \bbE\left[\sum_{t=0}^{\infty} \gamma^t x_t'x_t' \right], \quad \rho_{K',\tilde{K}'} := \bbE\left[\sum_{t=0}^{\infty} \gamma^t x_t'\right].
\end{align*}
For notational convenience, we also define
\footnotesize
\begin{align*}
   &\Psi_{(\!K,\tilde{K}\!),(\!K',\tilde{K}'\!)} := \norm*{R \!+\! \gamma B^\top P_K B}_2\!\left(\max\!\left\{\!\norm*{\Sigma_{K',\tilde{K}'}}\!,\! \norm*{\rho_{K',\tilde{K}'}}\!,\! \norm*{x^*}^2 \!\right\}\right).
\end{align*}
\normalsize
Continuing from \eqref{eq:value_diff_eq_intermediate_for_smoothness}, and recalling that $g' = \tilde{K}'x^*, g = \tilde{K}x^*$, we have that
\begin{align*}
    &J(K',\tilde{K}') - J(K,\tilde{K}) = J(K',g') - J(K,g) \\
    &= 2 \tr((K'-K)^\top (C_K \underline{\bm{\Sigma_{K',\tilde{K}'}}}  + d_{K,\tilde{K}}x^* \underline{\bm{\rho_{K',\tilde{K}'}}}^\top)) \\
    & \ \ + 2\tr((\tilde{K}' - \tilde{K})^\top (C_K \underline{\bm{\rho_{K',\tilde{K}'}}} + d_{K,\tilde{K}}x^*)(x^*)^\top) \\
    & \ \ + O\left( \Psi_{(K,\tilde{K}),(K',\tilde{K}')} \left(\norm*{K' - K}^2 + \norm*{\tilde{K}' - \tilde{K}}^2 \right)\right) \\
    &\labelrel\approx{eq:smoothness_approx-appendix} 2 \tr((K'-K)^\top (C_K \underline{\bm{\Sigma_{K,\tilde{K}}}}  + d_{K,\tilde{K}}x^* \underline{\bm{\rho_{K,\tilde{K}}}}^\top)) \\
    & \ \ + 2\tr((\tilde{K}' - \tilde{K})^\top (C_K \underline{\bm{\rho_{K,\tilde{K}}}} + d_{K,\tilde{K}}x^*)(x^*)^\top) \\
    & \ \ + O\left( \Psi_{(K,\tilde{K}),(K',\tilde{K}')} \left(\norm*{K' - K}^2 + \norm*{\tilde{K}' - \tilde{K}}^2 \right)\right) \\
    &\labelrel={eq:gradient_equality-appendix} \tr((K'-K)^\top \nabla_K J(K,\tilde{K}) \\
    & \ \ + \tr((\tilde{K}' - \tilde{K})^\top \nabla_{\tilde{K}} J(K,\tilde{K}) \\
    & \ \ + O\left( \Psi_{(K,\tilde{K}),(K',\tilde{K}')} \left(\norm*{K' - K}^2 + \norm*{\tilde{K}' - \tilde{K}}^2 \right)\right) \\
    &\labelrel={eq:smoothness_simplify_big_O-appendix}\tr((K'-K)^\top \nabla_K J(K,\tilde{K}) \\
    & \ \ + \tr((\tilde{K}' - \tilde{K})^\top \nabla_{\tilde{K}} J(K,\tilde{K}) \\
    & \ \ + O\left( \Psi_C \left(\norm*{K' - K}^2 + \norm*{\tilde{K}' - \tilde{K}}^2 \right)\right)
\end{align*}
\normalsize
Above, we bolded and underlined the terms that are different before and after the approximation in \eqref{eq:smoothness_approx-appendix}. By constraining $\norm*{K' - K}$ and $\norm*{\tilde{K}' - \tilde{K}}$ to both be sufficiently small, it is possible to prove perturbation bounds of the form
\small
\begin{align}
    &\norm*{\Sigma_{K',\tilde{K}'} -  \Sigma_{K,\tilde{K}}} \leq c \left(\norm*{K' - K} + \norm*{\tilde{K}' - \tilde{K}} \right), \label{eq:perturbation_bd_1_appendix}\\ 
    &\norm*{\rho_{K',\tilde{K}'} - \rho_{K,\tilde{K}}} \leq c' \left(\norm*{K' - K} + \norm*{\tilde{K}' - \tilde{K}} \right) \label{eq:perturbation_bd_2_appendix}
\end{align}
\normalsize
for some positive constants $c,c'$ depending only on $(A,B,Q,R,\Sigma, \gamma,x^*,C)$. This shows that the approximation in \eqref{eq:smoothness_approx-appendix} is locally valid. Meanwhile, the equality in \eqref{eq:gradient_equality-appendix} holds by the form of the gradient of $J$ we computed in Lemma \ref{lemma:gradient of J}. Finally, for the equality in \eqref{eq:smoothness_simplify_big_O-appendix}, we replaced the constant $\Psi_{(K,\tilde{K}),(K',\tilde{K}')}$ with a constant $\Psi_C$, which can be shown to depend only on the sublevel set parameter $C$ as well as the system parameters $(A,B,Q,R,\Sigma,\gamma,x^*)$. To see why, recall that
\footnotesize
\begin{align*}
   &\Psi_{(\!K,\tilde{K}\!),(\!K',\tilde{K}'\!)} := \norm*{R \!+\! \gamma B^\top P_K B}_2\!\left(\max\!\left\{\!\norm*{\Sigma_{K',\tilde{K}'}}\!,\! \norm*{\rho_{K',\tilde{K}'}}\!,\! \norm*{x^*}^2 \!\right\}\right).
\end{align*}
\normalsize
Since $(K,\tilde{K})$ lies within the sublevel set $\mathcal{G}_C$, it is possible to uniformly bound the norm of $P_K$, $\Sigma_{K,\tilde{K}}$, as well as $\rho_{K,\tilde{K}}$, over the set $\mathcal{G}_C$. Coupled with perturbation bounds such as the ones in \eqref{eq:perturbation_bd_1_appendix} and \eqref{eq:perturbation_bd_2_appendix}, it is not hard to see that $\Psi_{(K,\tilde{K}),(K',\tilde{K}')}$ can be replaced with a constant $\Psi_C$, which depends only on the sublevel set parameter $C$ as well as the system parameters $(A,B,Q,R,\Sigma,\gamma,x^*)$. From \eqref{eq:smoothness_simplify_big_O-appendix}, we see that for any $(K,\tilde{K})$ in $\mathcal{G}_C$, the cost difference between it and a sufficiently close policy $(K',\tilde{K}')$ can be written as an inner product with the gradient plus some quadratic terms. By deriving norm bounds on $\nabla J(K,\tilde{K})$ uniform over any $(K,\tilde{K})$ over $\mathcal{G}_C$, it follows that $J$ is locally Lipschitz within a radius $\rho$ and Lipschitz parameter $\lambda$ that hold uniformly over $\mathcal{G}_C$. A similar approach allows us to prove the local Lipschitzness of the gradient of $J$. Our argument will thus comprise the following steps:
\begin{enumerate}
    \item Prove a variety of auxiliary norm bounds on terms
    such as $$P_K, C_K, \Sigma_{K,\tilde{K}}, d_{K,\tilde{K}}x^*,\rho_{K,\tilde{K}},$$ within $\mathcal{G}_C$. 
    \item Prove a series of perturbation bounds of the type ``assuming $(K,\tilde{K})$ is in the sublevel set $\mathcal{G}_C $, if $\norm{K'-K}$ and $\norm{\tilde{K}' - \tilde{K}}$ is small enough, then the difference (Term$(K',\tilde{K}')$ - Term $(K,\tilde{K})$) also has small norm'' for various terms Term($K,\tilde{K}$) depending on $K$ and $\tilde{K}$.
    \item Prove the local Lipschitz gradient condition within radius $\rho$, using the preceding norm bounds and perturbation bounds.
    \item Prove local Lipschitzness of $J$ (both population cost and sample cost) using the preceding norm bounds and perturbation bounds.

\end{enumerate}
In the sequel, without loss of generality, we assume that the covariance of the initial noise distribution, $\Sigma$, is the identity matrix $I_{n \times n}$. We also interchangably refer to $J(K',\tilde{K}')$ as $J(K',g')$ and $J(K,\tilde{K})$ as $J(K,g)$, using the notation $g' = \tilde{K}'x^*$ and $g = \tilde{K}x^*$.

\paragraph{Constants used in local Lipschitz and smoothness arguments.} Below is a list of constants that we will use in the local smoothness argument, where $C > 0$ is the value defining the sublevel set $\mathcal{G}_C$.
\small
\begin{itemize}
    \item $c_1 = \frac{\sqrt{(C - C(K^{\mathrm{opt}})) (\rho(R) +\norm{B}^2C)}}{\sigma_{\min}(R)} + \frac{\norm{B}C \norm{A}}{\sigma_{\min}(R)}$ \\
    \item $c_2 = \sqrt{\frac{C}{\sigma_{\min}(R)}}$ 
    \item $c_2' = c_2 + \norm{x^*}$
    \item $c_3 = \frac{2C}{\sigma_{\min}(Q)}$
    \item $c_4 = \frac{\sigma_{\min}(Q)}{4C \norm{\sqrt{\gamma}B}_2(\norm{\sqrt{\gamma}A}_2 + \norm*{\sqrt{\gamma}B}_2c_1 + 1)}$
    \item $c_5 = \frac{\sigma_{\min}(Q)}{4\gamma \norm{B} C}$.
    \item $c_6 = \frac{1}{\sigma_{\min}(R)}\left(\sqrt{(\norm{R}_2 + \norm{B}_2^2 C) (C - J^{\mathrm{opt}})} + \norm{B}_2 \norm{A}_2 C \right)$
    \item $c_7 = \min \left\{\frac{C}{\sigma_{\min}(Q)} \sqrt{(\norm{R}_2 + \norm{B}_2^2 C) (C - J^{\mathrm{opt}})}, c_6 \right\}$ 
    \item $c_8 = 4 \left( \frac{C}{\sigma_{\min}(Q)}\right)^2 \norm{Q}_2 \norm{B}_2 (\norm{A}_2 + \norm{B}_2c_{7} + 1)$ 
    \item $c_9 = 8 \left( \frac{C}{\sigma_{\min}(Q)}\right)^2 (c_7^2) \norm{R}_2\norm{B}_2 (\norm{A}_2 + \norm{B}_2c_{7} + 1)$
    \item $c_{10} = 2 \left( \frac{C}{\sigma_{\min}(Q)}\right)^2 (c_7 + 1) \norm{R}_2$
    \item $c_{11} = c_8 + c_9 + c_{10}$
    \item $c_{12} = 2\gamma c_2' c_3 \norm{B}_2 (c_1 + 1)\norm{R}_2 + c_2' \norm{R}_2$ 
    \scriptsize
    \item 
    $c_{13} = 2 \gamma^{3/2} c_2' (C + c_{11}) \norm{B}_2^2  
        \!+\! c_2' c_3(\gamma \norm{B}_2^2 (C + c_{11}) \norm{B}_2 + \sqrt{\gamma} \norm{B}_2 c_2'c_{11})$
    \small
    \item $c_{14} = 2\gamma c_3 \norm{B}_2 \norm{Qx^*} + c_{12} + c_{13}$
    \item $c_{15} = 2c_1 c_3 \norm{R}_2 + \gamma c_3 C \norm{B}_2$.
    \item $c_{16} = \norm{R}_2 + \gamma \norm{B}_2 [c_{11}+ \norm{B}_2 C)] $
    \item $c_{17} = \gamma \norm{B}_2^2 c_2' c_{11} + \gamma \norm{B}_2 c_{14}$ 
    \item $c_{18} = \norm{R}_2 + \gamma \norm{B}_2^2 C + \gamma \norm{B}_2 c_{15}$
\end{itemize}
\normalsize

\subsubsection{Auxiliary bounds}

We first bound the terms $K$ and $g := \tilde{K}x^*$. It is helpful to build on existing results for the LQR cost with zero target in \cite{DBLP:conf/icml/Fazel0KM18}.
We define $f(K)$ to be the following cost, a standard LQR with tracking target $x^* = 0$: 
\begin{equation}
    \begin{array}{ll}
         & f(K) = \sum_{t=0}^{\infty} x_t^\top Q x_t + u_t^\top R u_t,  \\
        \mbox{such that} & x_{t+1} = \sqrt{\gamma}Ax_t + \sqrt{\gamma}Bu_t, \\
        & u_t = Kx_t, \quad x_0 \sim N(0,\Sigma).
    \end{array}
    \label{equation: f(K) definition}
\end{equation}
We now show that if $J(K,\tilde{K})$ is bounded, then so is the cost $f(K)$.
\begin{lemma}
\label{lemma:bound_on_J_implies_bound_on_f}
Consider any constant $C > 0$. Suppose $J(K,\tilde{K}) \leq C$. Then, $f(K) \leq C$ holds.
\end{lemma}
\begin{proof}
Recall the notation $g := \tilde{K}x^*$. Based on Lemma \ref{lemma:value function of J(K,g)}, we see that 
\begin{align*}
    J(K,g) &= \tr(\Sigma P_K) + \mathbb{E}[x_0]^\top q_{K,g} +  r_{K,g}, \\
    &= \tr(\Sigma P_K) + r_{K,g} = f(K) + r_{K,g},
\end{align*}
where we used $\mathbb{E}[x_0] = 0$ as well as the easy-to-check fact that $f(K) = \tr(\Sigma P_K)$. Since $r_{K,g} \geq 0$ has to hold --- otherwise the value function $V(K,g; 0)$, \emph{i.e.} value function of $(K,g)$ when $x_0 = 0$, is negative ---,  it follows that $$J(K,\tilde{K}) = J(K,g) = f(K) + r_{K,g} \leq C \implies f(K) \leq C.$$
\end{proof}

Consider the zero-target trajectory $\{y_t\}$ given by the dynamics $y_0 =x_0, y_{t+1} = \sqrt{\gamma}Ay_t + \sqrt{\gamma}BKy_t$. Define 
$$\Phi_K := \bbE\left[\sum_{t=0}^{\infty} y_t y_t^\top\right].$$

With these definitions out of the way, we can turn to our first bounds on $\norm{K}$ and $\norm{\tilde{K}x^*}.$
\begin{lemma}[Bound on $\norm{K}, \norm{\tilde{K}x^*}$]
\label{lemma:bound K and g}
Suppose $(K, \tilde{K}) \in \mathcal{G}_C$. Then, there exists constants $c_1, c_2 > 0$ depending only on $(A,B,Q,R,\Sigma,\gamma,C)$ such that 
\begin{align*}
    \norm{K} \leq c_1, \norm{\tilde{K}x^*} \leq c_2. 
\end{align*}
\end{lemma}
\begin{proof}
Since $J(K,\tilde{K}) \leq C$, $f(K) \leq C$ holds as well by Lemma \ref{lemma:bound_on_J_implies_bound_on_f}. Thus, by Lemma 25 in \cite{DBLP:conf/icml/Fazel0KM18}, there exists $c_1$ depending only on $(A,B,Q,R,\Sigma,\gamma,C)$ such that $\norm{K} \leq c_1$. 

We next show that $g = \tilde{K}x^*$ is bounded. To this end, note that the cost at $t = 0$ is bounded by $C$, namely
\begin{align*}
    &\mathbb{E}[(x_0 - x^*)^\top Q (x_0 - x^*) + (Kx_0 + g)^\top R(Kx_0 + g)] \leq C  \\
    &\implies \tr(Q) + (x^*)^\top Qx^* + x_0^\top (K^\top RK)x_0 + g^\top Rg \leq C \\
    &\implies g^\top R g \leq C \\
    &\implies \norm{g} \leq \sqrt{C/\sigma_{\min}(R)} := c_2.
\end{align*}
\end{proof}

We next bound $\tr(\Phi_K)$ and $\norm{P_K}_2.$ This result follows directly from Lemma 13 in \cite{DBLP:conf/icml/Fazel0KM18}. \begin{lemma}[Bound on $\tr(\Phi_K)$ and $\norm{P_K}_2$]
\label{Lemma:bound Phi_k and P_K}
Suppose $(K, \tilde{K}) \in \mathcal{G}_C$. Then, 
\begin{align*}
    \norm{P_K}_2 \leq C, \quad  \tr(\Phi_K) \leq \frac{C}{\sigma_{\min}(Q)}.
\end{align*}
\end{lemma}

We next turn to bounding the following terms: $$C_K, \Sigma_{K,\tilde{K}}, d_{K,\tilde{K}}x^*, \rho_{K,\tilde{K}}.$$

Recall that
\begin{align*}
  C_K = RK + \gamma B^\top P_K (BK + A), \quad d_{K,\tilde{K}} x^*= (R\tilde{K} + \gamma B^\top P_K B\tilde{K} + \gamma B^\top q_{K,\tilde{K}})x^*.
\end{align*}
\begin{lemma}[Bound on $C_K$ and $d_{K,\tilde{K}}x^*$]
\label{lemma:bound_C_K_d_KKtilde}
Suppose $$(K, \tilde{K}) \in \mathcal{G}_C = \{(K, \tilde{K}) \mid J(K,\tilde{K}) \leq C\}.$$ Then, 
\begin{align*}
    \norm*{C_K}_F^2 \leq C \norm{R + B^\top P_K B}_2, \quad \norm{d_{K,\tilde{K}}x^*}_F^2 \leq C \norm{R + B^\top P_K B}_2.
\end{align*}
\end{lemma}
\begin{proof}
From the lower bound in Proposition \ref{proposition: gradient dominant single agent}, we see that 
\begin{align*}
    \norm*{C_K}_F^2 + \norm{d_{K,\tilde{K}}x^*}_F^2 \leq (J(K,\tilde{K}) - J(K^{\mathrm{opt}}, \tilde{K}^{\mathrm{opt}})) \norm{R + B^\top P_K B}_2 \leq J(K,\tilde{K}) \norm{R + B^\top P_K B}_2.
\end{align*}
Using the definition of the sublevel set $\mathcal{G}_C$ then yields the result. 
\end{proof}

We next seek to bound $\Sigma_{K,\tilde{K}}$ and $\rho_{K,\tilde{K}}$. We begin with $\rho_{K,\tilde{K}}$. Observe that
\begin{align*}
    \rho_{K,\tilde{K}} = \bbE\left[\sum_{t=0}^{\infty} \gamma^t x_t \right],
\end{align*}
where $\{x_t\}$ is generated by the policy $(K, \tilde{K})$. Using the fact that $\bbE[x_0] = 0$, since $x_t = (A+BK)^t x_0 + \sum_{i=0}^{t-1} (A+BK)^i B\tilde{K}x^*$, 
we see that 
\begin{align*}
    \bbE[x_t] = \sum_{i=0}^{t-1} (A+BK)^i B\tilde{K} x^*.
\end{align*}
With some algebraic manipulation, introducing the definition
$$S_K := (I - \gamma (A+BK))^{-1},$$
we see that 
\begin{align}
    \rho_{K,\tilde{K}} &= \sum_{t=0}^{\infty} \gamma^t \left( \sum_{i=0}^{t-1} (A+BK)^i B\tilde{K} x^*\right) \nonumber \\
    &\labelrel={eq:rho_K_Ktilde_manipulate} \sum_{t=1}^{\infty}\gamma^t \left(\sum_{i=0}^{\infty} (A+BK)^i \right) B\tilde{K}x^* \nonumber \\
    &= \sum_{t=1}^{\infty} \gamma^t S_K B\tilde{K}x^*. \label{equation:rho_KL_halfway_bound} 
\end{align}
Above, \eqref{eq:rho_K_Ktilde_manipulate} follows from rearranging the terms in the summation. Thus bounding $\rho_{K,\tilde{K}}$ reduces to bounding $S_K$ and $B\tilde{K}x^*$. We already have a bound on the latter term from Lemma \ref{lemma:bound K and g}. This next result bounds $S_K $.
\begin{lemma}[Bound on $S_K$] \label{lemma: bounding S_K}
Suppose $(K, \tilde{K}) \in \mathcal{G}_C$. Let 
$$S_K := (I - \gamma (A+BK))^{-1}.$$ Then, 
\begin{align*}
    \norm{S_K}_2 \leq  \frac{2C}{\sigma_{\min}(Q)} := c_3.
\end{align*}
\end{lemma}
\begin{proof}
Observe now that
\begin{align*}
    \norm{S_K}_2 &= \rho\left(( I - \gamma(A+BK))^{-1}\right) \\
    &= \rho\left(\sum_{t=0}^{\infty} \gamma^t(A+BK)^t\right) \\
    &\labelrel\leq{eq:bound_S_k_part_1} 2 \sum_{t=0}^{\infty} \gamma^{2t} \rho\left((A+BK)^{2t}\right) \\
    &\leq 2 \sum_{t=0}^{\infty} \gamma^{t} \rho\left((A+BK)^{2t}\right) \\
    &\leq 2 \sum_{t=0}^{\infty}  \tr\left( (\sqrt{\gamma}(A+BK))^t((\sqrt{\gamma}(A+BK))^t)^\top\right) \\
    &\labelrel={eq:bound_S_k_part_2} 2 \sum_{t=0}^{\infty} \gamma^{t} \tr\left( (\sqrt{\gamma}(A+BK))^t\Sigma((\sqrt{\gamma}(A+BK))^t)^\top\right) \\
    &\labelrel\leq{eq:bound_S_k_part_3} 2 \tr(\Phi_K) \labelrel\leq{eq:bound_S_k_part_4} 2 \frac{f(K)}{\sigma_{\min}(Q)} \labelrel\leq{eq:bound_S_k_part_5} 2 \frac{C}{\sigma_{\min}(Q)},
\end{align*}
Above, inequality \eqref{eq:bound_S_k_part_1} uses the fact that $\sqrt{\gamma}\rho(A+BK) < 1$ so that the even terms in the sum are larger than the odd terms, \eqref{eq:bound_S_k_part_2} uses our simplifying assumption made in the outline that $\Sigma := \sum_{t=0}^{\infty} x_0 x_0^\top = I$, \eqref{eq:bound_S_k_part_3} follows from the definition of $\Phi_K:= \sum_{t=0}^{\infty} y_t y_t^\top$, where $\{y_t\}$ is the zero-target trajectory given by the dynamics $y_0 =x_0, y_{t+1} = \sqrt{\gamma}Ay_t + \sqrt{\gamma}BKy_t$, such that $y_t = (\sqrt{\gamma}(A+BK))^t x_0$. Meanwhile, inequality \eqref{eq:bound_S_k_part_4} comes from Lemma \ref{Lemma:bound Phi_k and P_K}, and inequality \eqref{eq:bound_S_k_part_5} follows from Lemma \ref{lemma:bound_on_J_implies_bound_on_f}.
\end{proof}

We are now ready to bound $\rho_{K,\tilde{K}}x^*$. 
\begin{lemma}[Bound on $\rho_{K,\tilde{K}}x^*$]
\label{lemma:bound on rho_KL}
Suppose $(K, \tilde{K}) \in \mathcal{G}_C$. Then, recalling the definition
$$\beta_{\gamma} = \sum_{t=0}^{\infty} \gamma^t, $$
we have
\begin{align*}
    \norm{\rho_{K,\tilde{K}}x^*} \leq 2 \beta_{\gamma} \frac{C}{\sigma_{\min}(Q)} \norm{B}c_2.
\end{align*}
\end{lemma}
\begin{proof}
From Equation \ref{equation:rho_KL_halfway_bound}, we have the following bound
\begin{align*}
\norm{\rho_{K,\tilde{K}}x^*} &= \norm*{\sum_{t=1}^{\infty} \gamma^t S_K B\tilde{K}x^*} \\
&\leq \sum_{t=1}^{\infty} \gamma^t \norm{S_K B\tilde{K}x^*} \\
&\leq \sum_{t=1}^{\infty} \gamma^t \norm{S_K}_2 \norm{B} \norm{\tilde{K}x^*} \\
&\leq \sum_{t=1}^{\infty} 2 \frac{f(K)}{\sigma_{\min}(Q)} \norm{B}c_2 \leq 2 \beta_{\gamma} \frac{C}{\sigma_{\min}(Q)} \norm{B}c_2
\end{align*}
The penultimate inequality comes from the bounds on $\norm{S_K}_2$ and $\norm{\tilde{K}x^*}$ from Lemma \ref{lemma: bounding S_K} and Lemma \ref{lemma:bound K and g} respectively, while the final inequality comes from our definition of $\beta_{\gamma}$ and $f(K) \leq C$ which holds by Lemma \ref{lemma:bound_on_J_implies_bound_on_f}.
\end{proof}

We can now turn to bounding $\Sigma_{K,\tilde{K}}$. Consider defining the following linear operator on symmetric matrices, $T_K(\cdot)$, as follows
\begin{align*}
    T_K(X) := \sum_{t=0}^{\infty} \gamma^t (A+BK)^t X [(A+BK)^\top]^t,
\end{align*}
and define the induced norm of $T$ as follows:
\begin{align*}
    \norm{T_K} = \sup_X \frac{\norm{T_K(X)}}{\norm{X}}.
\end{align*}
We may bound the norm of $T_K$ as follows. We omit the proof since it follows directly from Lemma 17 in \cite{DBLP:conf/icml/Fazel0KM18} applied to the system matrices $(\sqrt{\gamma}A, \sqrt{\gamma}B)$, as well as our bound on $f(K)$ from Lemma \ref{lemma:bound_on_J_implies_bound_on_f}.
\begin{lemma}[Bound on $T_K$ norm]
\label{lemma:bound on T_K norm}
We have
\begin{align*}
    \norm{T_K} \leq \frac{f(K)}{ \sigma_{\min}(Q)} \leq \frac{C}{ \sigma_{\min}(Q)} .
\end{align*}
\end{lemma}

\begin{lemma}[Bound on $\Sigma_{K,\tilde{K}}$]
\label{lemma:bound on Sigma_K,Ktilde}
Suppose $(K, \tilde{K}) \in \mathcal{G}_C$. Then,
\begin{align*}
    \Sigma_{K,\tilde{K}} = \mathbb{E} \left[\sum_{t=0}^{\infty} \gamma^t x_t x_t^\top \right] = \Phi_K + \sum_{t=1}^{\infty} \gamma^t T_K \left(Bg (Bg)^\top  S_K^\top\right).
\end{align*}
Moreover, 
\begin{align*}
    \norm{\Sigma_{K,\tilde{K}}} \leq \frac{C}{\sigma_{\min}(Q)} + \beta_{\gamma} \frac{C}{\sigma_{\min}(Q)}\norm{B}^2 c_2^2.
\end{align*}
\end{lemma}
\begin{proof}
    Again, recall that $g = \tilde{K}x^*$.
    Observe that
    \begin{align*}
        \mathbb{E}[x_t x_t^\top] = \mathbb{E}[(x_t - \mathbb{E}[x_t])(x_t - \mathbb{E}[x_t])^\top] + \mathbb{E}[x_t]\mathbb{E}[x_t]^\top.
    \end{align*}
    We first find that
    \begin{align*}
        \mathbb{E}\left[\sum_{t=0}^{\infty}\gamma^t(x_t - \mathbb{E}[x_t])(x_t - \mathbb{E}[x_t])^\top \right] &= \mathbb{E}\left[\sum_{t=0}^{\infty} y_ty_t^\top\right] = \Phi_K,
    \end{align*}
    where the sequence $\{y_t\}$ is the zero-target trajectory obtained by the dynamics $y_{t+1} = \sqrt{\gamma} Ay_t + \sqrt{\gamma} BK y_t$,
    and we used the fact that each summand in the terms involving $g$ cancel out. Next, we have 
    \begin{align*}
       \sum_{t=0}^{\infty} \gamma^t \bbE[x_{t}]\bbE[x_t^\top] &= \sum_{t=0}^{\infty} \gamma^t \left(\sum_{i=0}^{t-1} (A+BK)^iBg\right)\left(\sum_{i=0}^{t-1} (A+BK)^iBg\right)^\top \\
       &= \sum_{t=1}^{\infty} \gamma^t Bg (Bg)^\top \left(\sum_{i=0}^{t-1} (A+BK)^i \right)^\top\\
       &\quad + \gamma(A+BK) \left(\sum_{t=1}^{\infty} \gamma^{t} Bg (Bg)^\top \left(\sum_{i=0}^{t-1} (A+BK)^i \right)^\top\right) (A+BK)^\top \\
       &\quad + \gamma^2 (A+BK)^2 \left(\sum_{t=1}^{\infty} \gamma^{t} Bg (Bg)^\top \left(\sum_{i=0}^{t-1} (A+BK)^i \right)^\top\right) ((A+BK)^2)^\top \\
       &\quad + \cdots \\
       &= T_K \left(Bg (Bg)^\top \sum_{t=1}^{\infty} \left(\sum_{i=0}^{t-1} (A+BK)^i \right)^\top\right) \\
       &= T_K \left(Bg (Bg)^\top \sum_{t=1}^{\infty} \gamma^t S_K^\top\right)
    \end{align*}
Thus, combining, we find
\begin{align*}
    \norm{\Sigma_{K,\tilde{K}}} &\leq \norm{\Phi_K} + \norm*{T_K \left(Bg (Bg)^\top \sum_{t=1}^{\infty} \gamma^t S_K^\top\right)} \\
    &\leq \norm{\Phi_K} + \sum_{t=1}^{\infty} \gamma^t \norm*{T_K \left(Bg (Bg)^\top  S_K^\top\right)} \\
    &\leq \norm{\Phi_K} + \beta_{\gamma} \norm{T_K} \norm{Bg}^2 \norm*{S_K^\top}_2^2 \\
    &\leq \frac{C}{\sigma_{\min}(Q)} + \beta_{\gamma} \frac{C}{ \sigma_{\min}(Q)}\norm{B}^2 c_2^2 c_3^2.
\end{align*}
For the final inequality, we used Lemma \ref{lemma:bound K and g}, Lemma \ref{Lemma:bound Phi_k and P_K}, Lemma \ref{lemma: bounding S_K} and  Lemma \ref{lemma:bound on T_K norm} to bound the norms of $g$, $\Phi_K$, $S_K$, and $T_K$ respectively. 
\end{proof}

\subsubsection{Helpful perturbation bounds}

We now proceed to state and prove various perturbation bounds of the type ``assuming $(K,\tilde{K})$ is in the sublevel set $\mathcal{G}_C$, if $\norm{K'-K}$ and $\norm{\tilde{K}' - \tilde{K}}$ is small enough, then Term($(K',\tilde{K}')$ - Term $(K,\tilde{K})$ also has small norm.'' for various terms Term($K,\tilde{K}$) depending on $K$ and $\tilde{K}$.

Throughout, we first need to establish that the $K'$ we pick has finite cost. To this end, consider the following lemma.

\begin{lemma}[Local radius for stable $K'$]
\label{lemma:rho(A+BK') < 1 }
    Suppose 
    \begin{align*}
        \norm{K' - K} \leq \frac{\sigma_{\min}(Q)}{4C \norm{\sqrt{\gamma}B}_2(\norm{\sqrt{\gamma}A}_2 + \norm*{\sqrt{\gamma}B}_2c_1 + 1)} := c_4.
    \end{align*}
Then, $\sqrt{\gamma} (A+BK') < 1$.
\end{lemma}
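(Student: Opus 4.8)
The statement is a standard ``stability is an open condition'' perturbation result, so the plan is to produce an explicit Lyapunov certificate that survives the perturbation. Writing $M := \sqrt{\gamma}(A+BK)$ and $\Delta := \sqrt{\gamma}B(K'-K)$, note that $\sqrt{\gamma}(A+BK') = M + \Delta$ and that $\norm{\Delta}_2 \leq \norm{\sqrt{\gamma}B}_2 \norm{K'-K}$. The appearance of $C(K)$ in the hypothesis presupposes $C(K)<\infty$, which is equivalent to $\rho(M)<1$ (cf. the proof of Lemma \ref{lemma:S_K is bounded and S_K' - S_K is bounded}), and the goal is exactly to show $\rho(M+\Delta) < 1$. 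I would prove this via the discrete-time Lyapunov criterion: if there exists $P \succ 0$ with $(M+\Delta)P(M+\Delta)^\top \prec P$, then $\rho(M+\Delta) < 1$. This follows by applying the strict inequality to a left eigenvector $w$ of $M+\Delta$, i.e.\ $(M+\Delta)^\top w = \lambda w$, which yields $(1-|\lambda|^2)\,w^* P w > 0$ and hence $|\lambda| < 1$.

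The certificate I would use is the closed-loop Gramian $\Sigma_K := \sum_{t=0}^{\infty} M^t (M^t)^\top$, which is exactly the $\Sigma_K$ appearing in the proof of Lemma \ref{lemma:S_K is bounded and S_K' - S_K is bounded} (the covariance of $x_{t+1} = M x_t$, $x_0 \sim N(0,I)$). It is finite since $\rho(M)<1$, satisfies the Lyapunov identity $\Sigma_K = I + M\Sigma_K M^\top$, obeys $\Sigma_K \succeq I$ (the $t=0$ term), and, crucially, inherits the bound $\norm{\Sigma_K}_2 \le \tr(\Sigma_K) \le C(K)/\sigma_{\min}(Q)$ established in that same proof. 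Writing $c := C(K)/\sigma_{\min}(Q) \ge \norm{\Sigma_K}_2 \ge 1$ and $a := \norm{M}_2 = \norm{\sqrt{\gamma}A + \sqrt{\gamma}BK}_2$ lets me quote the hypothesis compactly as $\norm{\Delta}_2 \le \frac{1}{4c(a+1)}$.

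The main computation is to expand, using the Lyapunov identity,
\begin{align*}
(M+\Delta)\,\Sigma_K\,(M+\Delta)^\top = \Sigma_K - I + \left(M\Sigma_K\Delta^\top + \Delta\Sigma_K M^\top + \Delta\Sigma_K\Delta^\top\right),
\end{align*}
and to bound the operator norm of the parenthesized error by $2a\,c\,\norm{\Delta}_2 + c\,\norm{\Delta}_2^2$. Plugging in $\norm{\Delta}_2 \le \frac{1}{4c(a+1)}$ and using $c \ge 1$ makes the linear term at most $\frac{a}{2(a+1)} < \tfrac12$ and the quadratic term at most $\frac{1}{16(a+1)^2} \le \tfrac{1}{16}$, so the error has norm strictly below $1$ and is therefore $\prec I$. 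Hence $(M+\Delta)\Sigma_K(M+\Delta)^\top \prec \Sigma_K - I + I = \Sigma_K$, and the Lyapunov criterion gives $\rho(M+\Delta) = \sqrt{\gamma}\rho(A+BK') < 1$.

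I expect the only real subtlety to be bookkeeping rather than conceptual: one must invoke the \emph{upper} bound on $\norm{\Sigma_K}_2$ together with the \emph{lower} bound $\Sigma_K \succeq I$ (which supplies $c \ge 1$, the inequality that tames the quadratic term), and state the Lyapunov criterion for $NPN^\top \prec P$ on the correct (left-eigenvector/transpose) side. Everything else is the routine sub-multiplicativity estimate above, and the constant in the hypothesis has been chosen precisely so that the two error contributions sum to a quantity strictly below $1$ (in fact at most $9/16$).
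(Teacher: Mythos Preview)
Your proposal is correct and follows essentially the same Lyapunov-perturbation argument that the paper invokes by citing Lemma~22 of \cite{DBLP:conf/icml/Fazel0KM18} (with $\mu\to 1$ and $(A,B)\to(\sqrt{\gamma}A,\sqrt{\gamma}B)$): use $\Sigma_K$ as a Lyapunov certificate, expand $(M+\Delta)\Sigma_K(M+\Delta)^\top$ via the identity $\Sigma_K = I + M\Sigma_K M^\top$, and bound the cross/quadratic terms using $\norm{\Sigma_K}_2 \le C(K)/\sigma_{\min}(Q)$. You have supplied the explicit arithmetic that the paper leaves to the cited reference, and your bookkeeping (including the use of $\Sigma_K\succeq I$ to get $c\ge 1$ for the quadratic term) is sound.
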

\begin{proof}
    This follows from the same argument in Lemma 22 of \cite{DBLP:conf/icml/Fazel0KM18}, with the system $(A,B)$ replaced by $(\sqrt{\gamma}A, \sqrt{\gamma}B)$. 
\end{proof}

Next, we seek to bound $(S_{K'} - S_K)$ --- to be precise, we show when $K - K'$ is small enough, $\norm{S_K' - S_K}$ can be bounded in terms of $\norm{K'-K}$.

\begin{lemma}
\label{lemma:bound S_K' - S_K}
Suppose $(K,\tilde{K})$ is in $\mathcal{G}_C$. Suppose $\norm{K'-K} \leq c_4$, so $\sqrt{\gamma} \rho(A + BK') < 1$, and that $\norm{K'-K} \leq \frac{\sigma_{\min}(Q)}{4\gamma \norm{B} C} := c_5$.
 Then, 
\begin{align*}
    \norm{S_{K'} - S_K} \leq 2\gamma c_3 \norm{B}_2 \norm{K' - K}, \quad \mbox{and} \quad \norm{S_{K'}}_2 \leq 2\norm{S_K}_2.
\end{align*}
\end{lemma}
\begin{proof}
For notational convenience, in this proof, we define
\[D := I - \gamma(A+BK), \quad E := \gamma B(K-K').\]
Then, we have
\begin{align}
    S_{K'} - S_K &= (D + E)^{-1} - D^{-1} \nonumber \\
    &= D^{-1}\left( I + ED^{-1}\right)^{-1} - D^{-1} \nonumber \\
    &= D^{-1} \left((I + ED^{-1})^{-1} - I\right) \label{eq:S_K'-S_K-bdd-first-eqn}
\end{align}
Next, consider $I + ED^{-1}$. Observe that
\begin{align}
    (I + ED^{-1})^{-1} = I - (I + ED^{-1})^{-1}ED^{-1}, \label{eq:bounding_I_EDinv_inv}
\end{align}
which can be verified by multiplying both sides with $I + ED^{-1}$. Note now that
\begin{align}
    \norm{
    ED^{-1}} &= \norm*{\gamma B(K-K') S_K} \labelrel\leq{eq:bound_on_EDinv_K'-K} \norm{S_K}_2 (\gamma \norm{B}_2 \norm{K' - K}) \labelrel\leq{eq:bound_on_EDinv_last_part} \frac{1}{2}, \label{eq:bound_on_EDinv}
\end{align}
where the inequality in \eqref{eq:bound_on_EDinv_last_part} holds because $\norm{S_K}_2 \leq \frac{2C}{\sigma_{\min}(Q)}$ from Lemma \ref{lemma: bounding S_K} and our assumption that $\norm{K'-K} \leq \frac{\sigma_{\min}(Q)}{4\gamma \norm{B} C}$. Continuing with equation \eqref{eq:bounding_I_EDinv_inv}, we note that
\begin{align}
    &\quad (I + ED^{-1})^{-1} = I - (I + ED^{-1})^{-1}ED^{-1} \nonumber \\
    &\implies \norm*{(I + ED^{-1})^{-1}}_2 \leq \norm*{I}_2 + \norm*{(I + ED^{-1})^{-1}ED^{-1}}_2 \nonumber \\
    &\labelrel\implies{eq:using_bound_EDinv} \norm*{(I + ED^{-1})^{-1}}_2 \leq \norm*{I}_2 + \frac{1}{2}\norm*{(I + ED^{-1})^{-1}}_2 \nonumber \\
    &\implies \frac{1}{2}\norm*{(I + ED^{-1})^{-1}}_2 \leq 1 \implies \norm*{(I + ED^{-1})^{-1}}_2 \leq 2. \label{eq:bound_on_I_EDinv_inv}
\end{align}
Above, \eqref{eq:using_bound_EDinv} follows from \eqref{eq:bound_on_EDinv}.

 Therefore, combining equations \eqref{eq:S_K'-S_K-bdd-first-eqn} and \eqref{eq:bounding_I_EDinv_inv}, we have that
\begin{align*}
    S_{K'} - S_K &= D^{-1} \left((I + ED^{-1})^{-1} - I\right) =  D^{-1} (-(I + ED^{-1})^{-1}ED^{-1}),
\end{align*}
which implies that
\begin{align*}
    \norm{S_{K'} - S_K} &= \norm{D^{-1} (-(I + ED^{-1})^{-1}ED^{-1})} \\
    &\labelrel\leq{eq:S_K'-S_K-other-bdd} \norm{S_K}_2 \norm*{(I + ED^{-1})^{-1}}_2 \norm*{ED^{-1}} \\
    &\leq 2\gamma c_3  \norm{B}_2 \norm{K' - K}
\end{align*}
Above, the final inequality follows from the bound on $\norm{S_K}_2$ from Lemma \ref{lemma: bounding S_K}, the bound on $\norm*{(I + ED^{-1})^{-1}}_2$ from \eqref{eq:bound_on_I_EDinv_inv}, and the bound on $\norm*{ED^{-1}}$ in \eqref{eq:bound_on_EDinv_K'-K} of equation \eqref{eq:bound_on_EDinv}. Meanwhile, it follows from \eqref{eq:S_K'-S_K-other-bdd} and the bounds in \eqref{eq:bound_on_EDinv} and \eqref{eq:bound_on_I_EDinv_inv} that 
\begin{align*}
    \norm*{S_{K'} - S_K}_2 \leq \norm*{S_K}_2.
\end{align*}
\end{proof}

Next, we bound the norm of $(T_{K'} - T_K)$ when $K'$ is close to $K$. The result follows from Equation (30b) of \cite{malik2019derivative}, with the system matrices $(A,B)$ replaced by $(\sqrt{\gamma}A, \sqrt{\gamma}B)$.
\begin{lemma}[Bound on norm of $(T_{K'} - T_{K})$]
\label{Lemma:bound T_K - T_K'}
Suppose $(K, \tilde{K}) \in \mathcal{G}_C$, and that $\norm{K'-K} \leq c_4$. Then,
\begin{align*}
    \norm{T_{K'} - T_K} \leq c_5 \norm{K'-K},
\end{align*}
where $c_5 := \frac{\sigma_{\min}(Q)}{4\gamma \norm{B}C}$ was defined earlier. 
\end{lemma}

Next, we bound the norm of $(P_K - P_{K'})$ and $f(K') - f(K)$ when $K'$ is close to $K$. The result follows from Lemma 13 of \cite{DBLP:conf/icml/Fazel0KM18}, with the system matrices $(A,B)$ replaced by $(\sqrt{\gamma}A, \sqrt{\gamma}B)$.
\begin{lemma}[Bound on norm of $P_{K'} - P_K$ and $f(K') - f(K)$]
\label{Lemma:bound P_K' - P_K and f(K') - f(K)}
Suppose $(K, \tilde{K}) \in \mathcal{G}_C$, and that $\norm{K'-K} \leq \min \{c_4,c_5\}$. Then, there exists a constant $c_{11} > 0$ depending only on $(A,B,Q,R,\Sigma,x^*,\gamma,C)$ such that
\begin{align*}
    \norm{P_{K'} - P_K}_2 \leq c_{11} \norm{K'-K},\quad 
    \abs{f(K') - f(K)} \leq n c_{11} \norm{K'-K}.
\end{align*}
\end{lemma}

We turn now to bounding $\rho_{K', \tilde{K}'} - \rho_{K,\tilde{K}}$. \begin{lemma}[Bound on $\rho_{K', \tilde{K}'} - \rho_{K,\tilde{K}}$]
\label{lemma: bound rho_K'Ktilde' - rho_KKtilde}
Suppose $$(K, \tilde{K}) \in \mathcal{G}_C, \quad \norm{K'-K} \leq \min\{c_4,c_5\}.$$ Then,
\begin{align*}
    \norm{\rho_{K', \tilde{K}'} - \rho_{K,\tilde{K}}} &\leq 4 \gamma \beta_{\gamma} \frac{C}{\sigma_{\min}(Q)}\norm{B}^2 c_2 \norm{K'-K}  + \beta_{\gamma} \frac{4C}{\sigma_{\min}(Q)} \norm{B} \norm{x^*} \norm{\tilde{K}' - \tilde{K}}.
\end{align*}
\end{lemma}
\begin{proof}
Note that
\begin{align*}
    \rho_{K', \tilde{K}'} - \rho_{K,\tilde{K}} &= \sum_{t=1}^{\infty} \gamma^t S_{K'} B\tilde{K}'x^* - \sum_{t=1}^{\infty} \gamma^t S_{K} B\tilde{K}x^* \\
    &= \sum_{t=1}^{\infty} \gamma^t \left(S_{K'}B\tilde{K} x^* + S_{K'} B(\tilde{K}' - \tilde{K})x^*\right) - \sum_{t=1}^{\infty} \gamma^t S_{K} B\tilde{K}x^* \\
    &= \sum_{t=1}^{\infty} \gamma^t (S_{K'} - S_K) B\tilde{K}x^* + \sum_{t=1}^{\infty} \gamma^t S_{K'} B(\tilde{K}' - \tilde{K})x^*.
\end{align*}
Then,
\begin{align*}
    \norm*{\rho_{K', \tilde{K}'} - \rho_{K,\tilde{K}}} &= \norm*{\sum_{t=1}^{\infty} \gamma^t (S_{K'} - S_K) B\tilde{K}x^* + \sum_{t=1}^{\infty} \gamma^t S_{K'} B(\tilde{K}' - \tilde{K})x^*} \\
    &\leq \norm*{\sum_{t=1}^{\infty} \gamma^t (S_{K'} - S_K) B\tilde{K}x^*} + \norm*{\sum_{t=1}^{\infty} \gamma^t S_{K'} B(\tilde{K}' - \tilde{K})x^*} \\
    &\leq \beta_{\gamma} 2\gamma \norm{S_K}_2 \norm{B} \norm{K'-K} \norm{B} \norm{\tilde{K}x^*} + \beta_{\gamma} \norm{S_{K'}} \norm{B}\norm{x^*} \norm{\tilde{K}' - \tilde{K}} \\
    &\leq 4 \gamma \beta_{\gamma} \frac{C}{\sigma_{\min}(Q)}\norm{B}^2 c_2 \norm{K'-K}  + \beta_{\gamma} \frac{4C}{\sigma_{\min}(Q)} \norm{B} \norm{x^*} \norm{\tilde{K}' - \tilde{K}}.
\end{align*} 
The penultimate inequality comes from the perturbation bound on $\norm*{S_{K'} - S_K}_2$ in Lemma \ref{lemma:bound S_K' - S_K}, our assumption that $\norm{K'-K} \leq c_5$, and the norm bound on $\tilde{K}x^*$ in Lemma \ref{lemma:bound K and g}. For the final inequality, we used the fact that $\norm{S_{K'}}_2 \leq 2 \norm{S_K}_2$, which we derived in Lemma \ref{lemma:bound S_K' - S_K}. 
\end{proof}

We next bound $\Sigma_{K', \tilde{K}'} - \Sigma_{K,\tilde{K}}$. 
\begin{lemma}[Bound on $\Sigma_{K', \tilde{K}'} - \Sigma_{K,\tilde{K}}$]
\label{lemma: bound Sigma_K'Ktilde' - Sigma_KKtilde}
Suppose $$(K, \tilde{K}) \in \mathcal{G}_C, \quad \norm{K'-K} \leq \min\{c_4,c_5,1\},\quad \norm*{\tilde{K}' - \tilde{K}} \leq 1.$$ Then, 
\small
\begin{align*}
    &\norm{\Sigma_{K', \tilde{K}'} - \Sigma_{K,\tilde{K}}}\\
    &\leq \left(c_6 + \norm{B}^2 (c_2')^2)(2\gamma c_3 \norm{B}_2) + (4c_3^2) c_5 \norm{B}^2 (c_2')^2  \right) \norm{K'-K} + \beta_{\gamma} \left(\frac{c_3C\norm{x^*}}{\sigma_{\min}(Q)} \left(\norm*{B}^2 c_2'+ c_2\right)\right) \norm*{\tilde{K}'- \tilde{K}}.
\end{align*}
\normalsize
\end{lemma}
\begin{proof}
Note that
\small
\begin{align*}
    \Sigma_{K', \tilde{K}'} - \Sigma_{K,\tilde{K}} &= \left(\Phi_{K'} + \sum_{t=1}^{\infty} \gamma^t T_{K'}\left((B\tilde{K}'x^*)(B\tilde{K}'x^*)^\top S_{K'}^\top\right)\right) - \left(\Phi_{K} + \sum_{t=1}^{\infty}\gamma^t T_K\left((B\tilde{K}x^*)(B\tilde{K}x^*)^\top S_K^\top \right)\right).
\end{align*}
\normalsize
We first bound $\Phi_{K'} - \Phi_K$. To this end, observe that by Lemma 16 in \cite{DBLP:conf/icml/Fazel0KM18}, there exists $c_6 > 0$ depending only on $(A,B,Q,R,\Sigma,\gamma,C)$ such that
\begin{align}
   \norm*{\Phi_{K'} - \Phi_K} \leq c_6 \norm{K'-K}. \label{equation: bound on Phi_k' - Phi_K} 
\end{align}
Meanwhile, recalling that $g := \tilde{K}x^*$ and $g' := \tilde{K}' x^*$, observe that by our assumption on $\norm*{\tilde{K}' - \tilde{K}} \leq 1$, it follows that
\begin{align*}
    \norm*{g' - g} \leq \norm*{\tilde{K}' - \tilde{K}} \norm{x^*} \leq \norm{x^*},
\end{align*}
which implies that
\begin{align}
    \norm*{g'} \leq \norm{g} + \norm{x^*} \leq c_2 + \norm{x^*} := c_2', \label{eq:bound_on_g'}
\end{align}
where the bound on $\norm{g}$ comes from Lemma \ref{lemma:bound K and g}. Then,
\footnotesize
\begin{align*}
    &\norm*{\sum_{t=1}^{\infty} \gamma^t \left(T_{K'}\left((B\tilde{K}'x^*)(B\tilde{K}'x^*)^\top S_{K'}^\top \right) -  T_K\left((B\tilde{K}x^*)(B\tilde{K}x^*)^\top S_K^\top \right)\right)} \\
    &\leq \sum_{t=1}^{\infty} \gamma^t  \norm*{T_{K'}\left((B\tilde{K}'x^*)(B\tilde{K}'x^*)^\top S_{K'}^\top \right) -  T_K\left((B\tilde{K}x^*)(B\tilde{K}x^*)^\top S_K^\top \right)} \\
    &\leq \beta_{\gamma} \left( \norm*{T_{K}\left((Bg')(Bg')^\top S_{K'}^\top\right) - (T_{K} - T_{K'})\left((Bg')(Bg')^\top S_{K'}^\top \right) - T_K\left((Bg)(Bg)^\top S_K^\top \right)} \right) \\
    &\leq \beta_{\gamma} \left(\norm{T_K} \norm*{(Bg')(Bg')^\top S_{K'}^\top - (Bg)(Bg)^\top S_K^\top)} + \norm*{(T_{K} - T_{K'})((Bg')(Bg')^\top)}\right) \\
    &\leq \beta_{\gamma} \left(\norm{T_K} \norm*{(Bg')(Bg')^\top S_{K}^\top +  (Bg')(Bg')^\top (S_{K'} - S_K)^\top - (Bg)(Bg)^\top S_K^\top)} + \norm*{(T_{K} - T_{K'})\left((Bg')(Bg')^\top S_{K'}^\top \right)}\right) \\
    &\leq \beta_{\gamma} \left(\norm{T_K}\norm{S_K}_2 \norm*{(Bg')(Bg')^\top - (Bg)(Bg)^\top} +  \norm{T_K} \norm*{(Bg')(Bg')^\top (S_{K'} - S_K)^\top} + \norm*{(T_{K} - T_{K'})\left((Bg')(Bg')^\top S_{K'}^\top \right)}\right) \\
    &\labelrel\leq{eq:bound_Sigma_K_pert_i} \beta_{\gamma} \left(\frac{c_3 C}{\sigma_{\min}(Q)} \left(\norm*{Bg'(g')^\top B^\top - Bgg^\top B^\top} +(\norm{B}^2 (c_2)'^2)(2\gamma c_3 \norm{B}_2) \norm{K'-K} \right)+ (4c_3^2)c_5 \norm{K'-K} \norm{B}^2 (c_2')^2  \right) \\
    &\leq \beta_{\gamma} \left(\frac{c_3C}{\sigma_{\min}(Q)} \left(\norm*{B}^2 \norm*{g'(g')^\top - gg^\top} + (\norm{B}^2 (c_2')^2)(2\gamma c_3 \norm{B}_2) \norm{K'-K} \right) + (4c_3^2) c_5 \norm{K'-K} \norm{B}^2 (c_2')^2  \right) \\
    &\leq \beta_{\gamma} \left(\frac{c_3C}{\sigma_{\min}(Q)} \left(\norm*{B}^2 \left(\norm*{g'(g-g')^\top} + \norm*{ (g'- g)(g)^\top}\right) + (\norm{B}^2 (c_2')^2)(2\gamma c_3 \norm{B}_2) \norm{K'-K} \right) + (4c_3^2) c_5 \norm{K'-K} \norm{B}^2 (c_2')^2  \right) \\
    &\leq \beta_{\gamma} \left(\frac{c_3C}{\sigma_{\min}(Q)} \left(\norm*{B}^2 \left(c_2'\norm*{(g-g')^\top} + c_2 \norm*{g'- g}\right) + (\norm{B}^2 (c_2')^2)(2\gamma c_3 \norm{B}_2) \norm{K'-K} \right) + (4c_3^2) c_5 \norm{K'-K} \norm{B}^2 (c_2')^2  \right)
\end{align*}
\normalsize
Above, \eqref{eq:bound_Sigma_K_pert_i} holds by the bound on $\norm*{T_K}$ in Lemma \ref{lemma:bound on T_K norm}, the bound on $\norm{S_K}$ in Lemma \ref{lemma: bounding S_K}, the bound on $\norm*{T_{K'} - T_K}$ from Lemma \ref{Lemma:bound T_K - T_K'}, the bound that $\norm*{g'} \leq c_2'$ from \eqref{eq:bound_on_g'}, and the bound $\norm{S_{K'}}_2 \leq 2 \norm{S_K}$ following from Lemma \ref{lemma:bound S_K' - S_K}.
Then, combining this with Equation \eqref{equation: bound on Phi_k' - Phi_K}, we have that
\small
\begin{align*}
    &\norm*{\Sigma_{K', \tilde{K}'} - \Sigma_{K,\tilde{K}}} \\
    &\leq \norm*{\Phi_{K'} - \Phi_K} + \norm*{\sum_{t=1}^{\infty} \gamma^t \left(T_{K'}\left((B\tilde{K}'x^*)(B\tilde{K}'x^*)^\top S_{K'}^\top \right) -  T_K\left((B\tilde{K}x^*)(B\tilde{K}x^*)^\top S_K^\top \right)\right)} \\
    &\leq c_6 \norm{K'-K} + \beta_{\gamma} \left(\frac{c_3C}{\sigma_{\min}(Q)} \left(\norm*{B}^2 c_2'+ c_2\right)\right) \norm*{g'- g}+ \left(\norm{B}^2 (c_2')^2)(2\gamma c_3 \norm{B}_2) + (4c_3^2) c_5 \norm{B}^2 (c_2')^2  \right) \norm{K'-K},
\end{align*}
\normalsize
which after rearranging yields our result.

\end{proof}

We turn next to bounding $\norm*{q_{K',\tilde{K}'}x^* - q_{K,\tilde{K}}x^*}$. This will be helpful in later bounding $d_{K',\tilde{K}'}x^* - d_{K,\tilde{K}}x^*$, which in turn will assist us in bounding $\nabla J(K',\tilde{K}') - \nabla J (K,\tilde{K}).$

Next, we provide a bound for the quantity $(q_{K',g'} - q_{K,g})$.
\begin{lemma}
\label{lemma:q_K'g' - q_Kg}
Suppose $$(K, \tilde{K}) \in \mathcal{G}_C, \quad \norm{K'-K} \leq \min\{c_4,c_5,1\},\quad \norm*{\tilde{K}' - \tilde{K}} \leq 1.$$ Then,
\begin{align*}
    \norm{q_{K',\tilde{K}'}x^* - q_{K,\tilde{K}}x^*} \leq c_{14} \norm{K'-K} + c_{15} \norm{x^*} \norm{\tilde{K}' - \tilde{K}},
\end{align*}
where 
\[c_{14} := 2\gamma c_3 \norm{B}_2 \norm{Qx^*} + c_{12} + c_{13}, \quad c_{15} := 2c_1 c_3 \norm{R}_2 + \gamma c_3 C \norm{B}_2 \]
and $c_{12}, c_{13} > 0$ are constants depending on $C$ and the system parameters, introduced in the proof later. 
\end{lemma}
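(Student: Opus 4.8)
The plan is to start from the closed form
\begin{align*}
q_{K,g} = S_K\left(-Qx^* + K^\top Rg + \gamma(A+BK)^\top P_K Bg\right)
\end{align*}
from Lemma \ref{lemma:value function of J(K,g)}, where $S_K := (I-\gamma(A+BK)^\top)^{-1}$, and to control $q_{K',g'}-q_{K,g}$ by telescoping across the factors that change between the two policies. The point is that each ingredient ($S_K$, $K$, $P_K$, and $g$) is either bounded or Lipschitz in $(K'-K)$ and $(g'-g)$ by the lemmas already proved, so the difference splits into finitely many terms, each a bounded quantity times $\norm{K'-K}$ or $\norm{g'-g}$. Throughout I use submultiplicativity of the operator/Frobenius norm and the triangle inequality, and I first check the hypotheses of Lemma \ref{lemma:bound S_K' - S_K} hold, which follow from Lemma \ref{lemma:rho(A+BK') < 1 } and Lemma \ref{lemma:bound on K'- K such that S_K(gamma B (K' - K)) < 1/2}; this is precisely where the restriction $\norm{K'-K}\le c_4$ is used.

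First I would isolate the one term of $w_{K,g} := -Qx^* + v_{K,g}$, with $v_{K,g} := K^\top Rg + \gamma(A+BK)^\top P_K Bg$, that is independent of $(K,g)$, writing
\begin{align*}
q_{K',g'} - q_{K,g} = (S_{K'}-S_K)(-Qx^*) + \left(S_{K'}v_{K',g'} - S_K v_{K,g}\right).
\end{align*}
For the first summand, Lemma \ref{lemma:bound S_K' - S_K} gives $\norm{S_{K'}-S_K}_2 \le 2\gamma c_3\norm{B}_2\norm{K'-K}$ (using $\norm{S_K}_2\le c_3$ from Lemma \ref{lemma:S_K is bounded and S_K' - S_K is bounded}), so it is bounded by $2\gamma c_3\norm{B}_2\norm{Qx^*}\,\norm{K'-K}$ --- exactly the $\norm{Qx^*}$ piece appearing in $c_{15}$.

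Next I would split the second summand as $(S_{K'}-S_K)v_{K',g'} + S_K(v_{K',g'}-v_{K,g})$. The first piece again scales with $\norm{K'-K}$ through $S_{K'}-S_K$, and since $v_{K',g'}$ is bounded by a constant (using $\norm{K'}\le c_1 + c_4$, $\norm{g'}\le c_2 + 1$, and $\norm{P_{K'}}_2$ bounded via $\norm{P_{K'}-P_K}_2\le(c_8+c_9+c_{10})\norm{K'-K}$ and $\norm{P_K}_2\le C_0$), it contributes a term $c_{13}\norm{K'-K}$. For the second piece I would telescope each of the two summands of $v$ across the simultaneous changes in $K$, $P_K$ and $g$; for instance
\begin{align*}
&(A+BK')^\top P_{K'}Bg' - (A+BK)^\top P_K Bg \\
&\quad = (B(K'-K))^\top P_{K'}Bg' + (A+BK)^\top(P_{K'}-P_K)Bg' + (A+BK)^\top P_K B(g'-g),
\end{align*}
and similarly $K'^\top Rg' - K^\top Rg = (K'-K)^\top Rg' + K^\top R(g'-g)$. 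Routing the $\norm{K'-K}$-proportional terms into $c_{14}$ (here the Lipschitz estimate $\norm{P_{K'}-P_K}_2\le(c_8+c_9+c_{10})\norm{K'-K}$ from Lemma \ref{lemma:local lipschitz bounds on C(K)} is essential) and the $\norm{g'-g}$-proportional terms into $c_{16}=2c_1 c_3\norm{R}_2 + c_3 C_0\norm{B}_2$, then combining with the $-Qx^*$ contribution, yields the claim with $c_{15}=2\gamma c_3\norm{B}_2\norm{Qx^*}+c_{13}+c_{14}$.

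The main obstacle is the bookkeeping in the last step: the product $(A+BK)^\top P_K Bg$ has three factors changing at once, so each telescoped term must be assigned to the correct coefficient while verifying that every remaining factor is controlled by an already-established constant, and the only genuinely new analytic input --- beyond repeated triangle inequality and submultiplicativity --- is the local Lipschitz bound on $P_K$. Everything else reduces to the boundedness facts $\norm{K}\le c_1$, $\norm{g}\le c_2$, $\norm{P_K}_2\le C_0$, $\norm{S_K}_2\le c_3$ together with their stability under the perturbations $\norm{K'-K}\le c_4$, $\norm{g'-g}\le 1$.
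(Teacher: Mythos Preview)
Your proposal is correct and follows essentially the same approach as the paper: both start from the closed form $q_{K,g}=S_K(-Qx^*+K^\top Rg+\gamma(A+BK)^\top P_K Bg)$, isolate the $(S_{K'}-S_K)(-Qx^*)$ contribution, and then telescope the remaining products using the already-established bounds on $\norm{S_K}_2$, $\norm{S_{K'}-S_K}_2$, $\norm{P_K}_2$, and $\norm{P_{K'}-P_K}_2$. The only cosmetic difference is that the paper treats the $K^\top Rg$ and $\gamma(A+BK)^\top P_K Bg$ pieces separately (assigning their $K'-K$ contributions to $c_{13}$ and $c_{14}$ respectively), whereas you bundle them into a single $v_{K,g}$ and split instead by whether the term comes from $(S_{K'}-S_K)v_{K',g'}$ or from $S_K(v_{K',g'}-v_{K,g})$; either bookkeeping yields the same $c_{15}$ and $c_{16}$.
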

\begin{proof}
For notational simplicity, throughout, we denote 
$$q_{K',g'} := q_{K',\tilde{K}'}x^*,\quad q_{K,g} := q_{K,\tilde{K}}x^*. $$
We have
\small
\begin{align}
&q_{K',g'} - q_{K,g} \nonumber \\
&= S_{K'}^\top(-Qx^* + (K')^\top R g' + \gamma (A+BK')^\top P_{K'} Bg') - S_{K}^\top(-Qx^* + K^\top R g + \gamma (A+BK)^\top P_K Bg) \nonumber \\
&= (S_{K'}^\top - S_K^\top)(-Qx^*) + \left(S_{K'}^\top(K')^\top Rg' - S_K^\top K^\top Rg\right) + \left(\gamma S_{K'}^\top (A+BK')^\top P_{K'}Bg' - \gamma S_K^\top (A+BK)^\top P_KBg \right) \label{eq:q_Kg_diff_decomposition}
\end{align}
\normalsize
We treat the three terms on the last line in \eqref{eq:q_Kg_diff_decomposition} separately. First, by Lemma \ref{lemma:bound S_K' - S_K} note that 
\small
\begin{align}
      \norm{(S_{K'}^\top - S_K^\top) (-Qx^*)} \leq (2\gamma \norm{S_K}_2 \norm{B}_2 \norm{K' - K}) \norm{Q}_2 \norm{x^*} \leq 2\gamma c_3 \norm{B}_2 \norm{Q}_2 \norm{x^*} \norm{K' - K}. \label{eq:q_Kg_diff_decomposition_first}
\end{align}
\normalsize 
    Next, observe that 
    \small
\begin{align}
 &\norm{S_{K'}^\top (K')^\top Rg' - S_K^\top K^\top Rg} \nonumber \\
        &= \norm{(S_{K'}^\top - S_K^\top) ((K')^\top Rg') + S_K^\top((K')^\top Rg') - S_K^\top K^\top Rg} \nonumber \\
        &= \norm{(S_{K'}^\top - S_K^\top) ((K')^\top Rg') + S_K^\top((K')^\top Rg' - K^\top Rg) + S_K^\top K^\top R g - S_ K^\top K^\top Rg} \nonumber \\
        &\labelrel\leq{eq:q_Kg_diff_decomposition_second_part1} 2\gamma c_3 \norm{B}_2 \norm{K' - K} ((\norm{K} + 1)\norm{R}_2 \norm{g'} \nonumber \\
        &\quad + 2 c_3\left(\norm*{K^\top R (g' - g) + (K' - K)^\top R g + (K' - K)^\top R (g'- g)}\right) \nonumber \\
        &\labelrel\leq{eq:q_Kg_diff_decomposition_second_part2} 2\gamma c_3 \norm{B}_2 (c_1 + 1)c_2'\norm{R}_2\norm{K'-K} \nonumber \\
        &\quad + 2c_3 \left(\norm{K} \norm{R}_2 \norm{g' - g} + \norm{K'-K} \norm{R}_2 \norm{g} + \norm{K' - K}\norm{R}_2 \norm{g' - g} \right) \nonumber \\
        &\labelrel\leq{eq:q_Kg_diff_decomposition_second_part3} 2\gamma c_3 \norm{B}_2 (c_1 + 1)c_2'\norm{R}_2 \norm{K'-K} \nonumber \\
        &\quad + 2c_3 (c_1 \norm{R}_2 \norm{g'-g} + \norm{K'-K} \norm{R}_2 c_2 + \norm{R}_2 \norm{g'-g}) \nonumber \\
        &= \left(2\gamma c_3 \norm{B}_2 (c_1 + 1)c_2'\norm{R}_2  + \norm{R}_2 c_2 \right) \norm{K' - K}  + (2c_1 c_3 +1) \norm{R}_2 \norm{g' - g} \nonumber \\
        &= \left(2\gamma c_3 \norm{B}_2 (c_1 + 1)c_2'\norm{R}_2  + c_2 \norm{R}_2 \right) \norm{K' - K}  + (2c_1 c_3 +1) \norm{R}_2 \norm{g' - g} \label{eq:q_Kg_diff_decomposition_second}
    \end{align}
    \normalsize 
    Above, \eqref{eq:q_Kg_diff_decomposition_second_part1} follows from the perturbation bound on $\norm{S_{K'} - S_K}_2$ in Lemma \ref{lemma:bound S_K' - S_K}, the bound on $\norm{S_K}_2$ in Lemma \ref{lemma: bounding S_K}, and our assumption that $\norm{K' - K} \leq 1$. Meanwhile, \eqref{eq:q_Kg_diff_decomposition_second_part2} and \eqref{eq:q_Kg_diff_decomposition_second_part3} both hold due to our bounds on $K$  Lemma \ref{lemma:bound K and g}, our assumption that $\norm{K' - K} \leq 1$, as well as the bound on $\norm{g'}$ on Line \ref{eq:bound_on_g'}. 
    
    We next turn to the last summand in \eqref{eq:q_Kg_diff_decomposition}, namely $$\left(\gamma S_{K'}^\top (A+BK')^\top P_{K'}Bg' - \gamma S_K^\top  (A+BK)^\top P_KBg \right).$$ 
    By a similar algebraic manipulation to the immediately preceding set of derivations, we find that
    \begin{align*}
        &\norm*{\gamma S_{K'}^\top (A+BK')^\top P_{K'}Bg' - \gamma S_K^\top  (A+BK)^\top P_KBg} \\
        &= \norm*{(S_{K'}^\top - S_K^\top) \gamma (A + BK')^\top P_{K'} Bg' + S_K^\top(\gamma (A + BK')^\top P_{K'} Bg' - \gamma (A + BK)^\top P_{K} Bg)} \\
        &\leq \gamma \norm{S_{K'}^\top - S_K^\top} \norm{(A + BK')^\top P_{K'} Bg'}_2 + \norm{S_K}_2 \norm{ \gamma (A + BK')^\top P_{K'} Bg' - \gamma (A + BK)^\top P_{K} Bg}.
    \end{align*}
    We first handle $\gamma \norm{S_{K'} - S_K} \norm{(A + BK')^\top P_{K'} Bg'}_2$. 
    This involves bounding $\sqrt{\gamma}\norm{(A + BK')^\top P_{K'} Bg'}_2$. To this end, observe that
    \begin{align*}
        &\sqrt{\gamma}\norm{(A + BK')^\top P_{K'} Bg'}_2 \\
        &\labelrel\leq{eq:q_Kg_diff_decomposition_third_1_part1}\norm{P_{K'}}_2 \norm{B}_2 \norm{g'} \\
        &= \norm{P_K + P_{K'} - P_K}_2 \norm{B}_2 \norm{g'} \\
        &\labelrel\leq{eq:q_Kg_diff_decomposition_third_1_part2} (\norm{P_K}_2 + c_{11} \norm{K'-K}) \norm{B}_2 (c_2') \\
        &\labelrel\leq{eq:q_Kg_diff_decomposition_third_1_part3} (C + c_{11})c_2' \norm{B}_2 
    \end{align*}
    Above, \eqref{eq:q_Kg_diff_decomposition_third_1_part1} holds due to our choice of $K'$ such that $\sqrt{\rho} \norm{A+BK'} \leq 1$, while \eqref{eq:q_Kg_diff_decomposition_third_1_part2} holds due to Lemma \ref{Lemma:bound P_K' - P_K and f(K') - f(K)} which bounds $\norm{P_K + P_{K'} - P_K}_2$ as well as the fact that $\norm{g'} \leq c_2'$, which we have seen earlier. Finally, \eqref{eq:q_Kg_diff_decomposition_third_1_part3} comes from the bound on $\norm{P_K}_2$ in Lemma \ref{Lemma:bound Phi_k and P_K}. 
    By Lemma \ref{lemma:bound S_K' - S_K} which bounds $\norm{S_{K'} - S_K}$, this gives us 
    \begin{align}
        \gamma \norm{S_{K'} - S_K} \norm{(A + BK')^\top P_{K'} Bg'} &\leq \sqrt{\gamma}(C + c_{11})c_2' \norm{B}_2 \norm{S_{K'} - S_K} \nonumber \\
        &\leq \sqrt{\gamma}(C + c_{11})c_2' \norm{B}_2 \left( 2\gamma \norm{S_K}_2 \norm{B}_2 \norm{K' - K}\right) \nonumber \\
        &\leq 2 \gamma^{3/2}  (C + c_{11})c_2' \norm{B}_2^2  \norm{K'-K}. \label{eq:q_Kg_diff_decomposition_third_1}
    \end{align}
    Next, we handle $\norm{S_K}_2 \norm{ \gamma (A + BK')^\top P_{K'} Bg' - \gamma (A + BK)^\top P_{K} Bg}$. Following a similar application of relevant norm and perturbation bounds, we find that
    \small
    \begin{align}
        &\norm{S_K}_2 \norm{ \gamma (A + BK')^\top P_{K'} Bg' - \gamma (A + BK)^\top P_{K} Bg}_2 \nonumber \\
        &= \norm{S_K}_2 \norm*{ \gamma((A+BK')^\top - (A+BK)^\top) P_{K'}Bg' + \gamma (A+BK)^\top (P_{K'}Bg' - P_KBg)} \nonumber \\
        &= \norm{S_K}_2 \norm*{ \gamma((A+BK')^\top - (A+BK)^\top)} \norm*{P_{K'}Bg'} + \gamma \norm*{ (A+BK)^\top \left((P_{K'} - P_K)Bg' + P_KB(g'-g) \right)} \nonumber \\
        &\leq \gamma c_3 \norm{B}_2^2 \norm{K'-K} (C + c_{11})c_2' \norm{B}_2 + \sqrt{\gamma}c_3 \left(\norm{P_{K'} - P_K} \norm{B}_2 c_2'\right) + \gamma \norm{P_K}_2 \norm{B}_2 \norm{g'-g} \nonumber \\
        &\leq \gamma c_3 \norm{B}_2^2 (C + c_{11})c_2' \norm{B}_2 \norm{K'-K} + \sqrt{\gamma}c_3 \norm{B}_2 c_2'c_{11}\norm{K'-K} + C \norm{B}_2 \norm{g'-g} \nonumber \\
        &\leq c_3(\gamma \norm{B}_2^2 (C + c_{11})c_2' \norm{B}_2 + \sqrt{\gamma} \norm{B}_2 c_2' c_{11}) \norm{K'-K} + \gamma c_3 C \norm{B}_2 \norm{g'-g}. \label{eq:q_Kg_diff_decomposition_third_2}
    \end{align}
    \normalsize
    Define now 
    \small
\begin{align*}
        &c_{12} := 2\gamma c_3 \norm{B}_2 (c_1 + 1)c_2'\norm{R}_2 + c_2 \norm{R}_2, \\
        & c_{13} := 2 \gamma^{3/2}  (C + c_{11})c_2' \norm{B}_2^2 + c_3(\gamma \norm{B}_2^2 (C + c_{11}) c_2' \norm{B}_2 + \sqrt{\gamma} \norm{B}_2 c_2' c_{11}).
    \end{align*}
    \normalsize
     Then, combining the bounds in \eqref{eq:q_Kg_diff_decomposition_first}, \eqref{eq:q_Kg_diff_decomposition_second}, \eqref{eq:q_Kg_diff_decomposition_third_1}, and \eqref{eq:q_Kg_diff_decomposition_third_2}, we find that 
    \begin{align*}
        \norm{q_{K',g'} - q_{K,g}} &\leq (2\gamma c_3 \norm{B}_2 \norm{Qx^*} + c_{12} + c_{13}) \norm{K'-K} + \left((2c_1 c_3 +1) \norm{R}_2 + \gamma c_3 C \norm{B}_2\right) \norm{g'-g}.
    \end{align*}
\end{proof}

We next bound the quantities $C_{K'} - C_K$ and $d_{K',\tilde{K}'}x^* - d_{K,\tilde{K}}x^*.$ 
\begin{lemma}
\label{lemma:bound C_K - C_k' and d_K'g' - d_Kg}
Suppose $$(K,\tilde{K}) \in \mathcal{G}_C, \quad \norm{K' - K} \leq \min\{c_4,c_5,1\}, \quad \norm{\tilde{K}' - \tilde{K}} \leq 1.$$ Then, 
\begin{align*}
    & \norm{C_{K'} - C_K} \leq c_{16} \norm{K'-K}  \\
    & \norm{d_{K',\tilde{K}'}x^* - d_{K,\tilde{K}}x^*} \leq c_{17} \norm{K'-K} + c_{18} \norm{x^*} \norm{\tilde{K}' - \tilde{K}},
\end{align*} 
where 
\begin{align*}
    &c_{16} = \norm{R}_2 + \gamma \norm{B}_2 [c_{11}+ \norm{B}_2 C)] \\
    &c_{17} = \gamma \norm{B}_2^2 c_2' c_{11} + \gamma \norm{B}_2 c_{14} \\
    &c_{18} = \norm{R}_2 + \gamma \norm{B}_2^2 C + \gamma \norm{B}_2 c_{15}.
\end{align*}
\end{lemma}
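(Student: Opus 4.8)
The plan is to reduce both inequalities to elementary algebra built on top of the two smoothness estimates already established: the bound $\norm{P_{K'}-P_K}_2 \le (c_8+c_9+c_{10})\norm{K'-K}$ from Lemma~\ref{lemma:local lipschitz bounds on C(K)}, and the bound on $\norm{q_{K',g'}-q_{K,g}}$ from Lemma~\ref{lemma:q_K'g' - q_Kg}. Before anything else I would record that every quantity destined to serve as a \emph{fixed} (uniformly bounded) factor is indeed bounded on the region of interest: from the earlier boundedness lemmas $\norm{K}\le c_1$, $\norm{g}\le c_2$, and $\norm{P_K}_2\le C_0$; since $\norm{g'-g}\le 1$ we also get $\norm{g'}\le c_2+1$; and the preceding perturbation lemma guarantees $\sqrt{\gamma}\rho(A+BK')<1$, so that $P_{K'}$ and $q_{K',g'}$ are well defined and the two difference estimates above genuinely apply at the perturbed point $K'$.

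For $\norm{C_{K'}-C_K}$, I would start from $C_K = RK + \gamma B^\top P_K(A+BK)$ and subtract, obtaining
\[ C_{K'}-C_K = R(K'-K) + \gamma B^\top\big(P_{K'}(A+BK') - P_K(A+BK)\big). \]
The only idea needed is to telescope the bracket by adding and subtracting $P_K(A+BK')$, which produces $(P_{K'}-P_K)(A+BK') + P_K B(K'-K)$; now each summand carries exactly one ``difference'' factor and one uniformly bounded factor. Applying the triangle inequality with Lemma~\ref{lemma:local lipschitz bounds on C(K)} on the first summand and $\norm{P_K}_2\le C_0$ on the second gives a bound of the form $\big(\norm{R}_2 + \gamma\norm{B}_2[(c_8+c_9+c_{10}) + \norm{B}_2 C_0]\big)\norm{K'-K}$, i.e. $c_{25}\norm{K'-K}$.

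For $\norm{d_{K',g'}-d_{K,g}}$ I would proceed in exactly the same spirit from $d_{K,g}=Rg+\gamma B^\top P_KBg+\gamma B^\top q_{K,g}$, splitting the difference into three groups: the linear term $R(g'-g)$, the term $\gamma B^\top(P_{K'}Bg'-P_KBg)$, and the term $\gamma B^\top(q_{K',g'}-q_{K,g})$. The middle group telescopes as $(P_{K'}-P_K)Bg' + P_KB(g'-g)$ and is controlled by Lemma~\ref{lemma:local lipschitz bounds on C(K)}, $\norm{g'}\le c_2+1$, and $\norm{P_K}_2\le C_0$; the last group is controlled directly by Lemma~\ref{lemma:q_K'g' - q_Kg}. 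Gathering the coefficients of $\norm{K'-K}$ and of $\norm{g'-g}$ separately then reproduces $c_{26}$ and $c_{27}$.

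There is no deep difficulty here once the smoothness of $K\mapsto P_K$ and $(K,g)\mapsto q_{K,g}$ is granted: the entire content is the telescoping choice that routes every term onto an already-bounded quantity, together with the constant bookkeeping. The one step deserving mild care is the factor $\norm{A+BK'}$ that arises in the $C$-difference; it must be handled as a finite constant (bounded because $\norm{K'}\le c_1+c_4$), and one should check that the stated $c_{25}$ is meant to absorb it. Apart from this, the argument is a mechanical triangle-inequality computation that is valid throughout the sublevel set by virtue of the uniform bounds recorded at the outset.
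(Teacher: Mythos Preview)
Your proposal is correct and essentially identical to the paper's proof: both telescope $P_{K'}(A+BK')-P_K(A+BK)$ as $(P_{K'}-P_K)(A+BK')+P_KB(K'-K)$ and $P_{K'}Bg'-P_KBg$ as $(P_{K'}-P_K)Bg'+P_KB(g'-g)$, then invoke Lemma~\ref{lemma:local lipschitz bounds on C(K)} and Lemma~\ref{lemma:q_K'g' - q_Kg} together with the uniform bounds $\norm{P_K}_2\le C_0$, $\norm{g'}\le c_2+1$. Your caution about the factor $\norm{A+BK'}$ is well placed---the paper's derivation simply writes $(c_8+c_9+c_{10})\norm{K'-K}$ at that step without displaying this factor, so the stated $c_{25}$ is tacitly absorbing it.
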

\begin{proof}
For notational simplicity, we denote
$$d_{K',g'} := d_{K',\tilde{K}'}x^*, \quad d_{K,g} := d_{K,\tilde{K}}x^*.$$
We first handle $C_{K'} - C_K$. Using norm bounds on $K,g$, perturbation bounds on $\norm{P_{K'} - P_K}$, and our assumptions on $K'$, we find that 
\begin{align*}
    &\norm{C_{K'} - C_K} \\
    &\leq \norm{RK' + \gamma B^\top P_{K'} (B K' + A) - (RK + \gamma B^\top P_K (BK + A))} \\
    &\leq \norm{R}_2 \norm{K'-K} + \gamma \norm{B}_2 \norm{(P_{K'} - P_K) (A + BK') + P_KB(K' - K)} \\
    &\leq \norm{R}_2 \norm{K'-K} + \gamma \norm{B}_2 [c_{11}\norm{K'-K} + \norm{B}_2 C \norm{K'-K}] \\
    &= (\norm{R}_2 + \gamma \norm{B}_2 [c_{11}+ \norm{B}_2 C)] )\norm{K'-K}.
\end{align*}
Meanwhile, using norm bounds on $K,g$, perturbation bounds on $\norm{P_{K'} - P_K}$ and $\norm*{q_{K',g'} - q_{K,g}}$, we find that
\begin{align*}
    &\norm{d_{K',g'} - d_{K,g}} \\
    &= \norm{Rg' + \gamma B^\top P_{K'} Bg' + \gamma B^\top q_{K',g'} - (Rg + \gamma B^\top P_{K} Bg + \gamma B^\top q_{K,g})} \\
    &\leq \norm{R}_2 \norm{g'-g} + \gamma \norm{B}_2 \norm{(P_{K'} - P_K) Bg' + P_K B(g'-g)} + \gamma \norm{B}_2 \norm{q_{K',g'} - q_{K,g}} \\
    &\leq \norm{R}_2 \norm{g'-g} + \gamma \norm{B}_2 ( \norm{B}_2 c_2' c_{11}\norm{K'-K} + C \norm{B}_2 \norm{g'-g})\\
    &\quad + \gamma \norm{B}_2 (c_{14} \norm{K'-K} + c_{15}\norm{g'-g}) \\
    &\leq (\norm{R}_2 + \gamma \norm{B}_2^2 C + \gamma \norm{B}_2 c_{15})\norm{g'-g} +  (\gamma \norm{B}_2^2 c_2' c_{11} + \gamma \norm{B}_2 c_{14}) \norm{K'-K}. 
\end{align*}
\end{proof}

\subsubsection{Local Lipschitzness of $\nabla J$}
We are now finally ready to combine the various norm and perturbation bounds to show that $J$ has locally Lipschitz gradient.

\begin{proposition}[$\nabla J(K,\tilde{K})$ is locally Lipschitz]
Suppose $$(K,\tilde{K}) \in \mathcal{G}_C, \quad \norm{K' - K} \leq \min\{c_4,c_5,1\}, \quad \norm{\tilde{K}' - \tilde{K}} \leq 1.$$ Then, there exists $L > 0$ depending only on $(A,B,Q,R,\Sigma,\gamma,x^*,C)$ such that 
\begin{align*}
    &\norm{\nabla J(K',\tilde{K}') - \nabla J(K,\tilde{K})} \leq L \left(\norm*{K'-K} + \norm*{\tilde{K}' - \tilde{K}} \right). 
\end{align*}
\end{proposition}
\begin{proof}

We seek to bound $\norm*{\nabla J(K',\tilde{K}') - \nabla J(K,\tilde{K})}^2$. We consider the derivative terms in $K$ and $\tilde{K}$ separately. We first tackle $\norm{\nabla_K J(K',\tilde{K}') - \nabla_K J(K,\tilde{K})}$. From Lemma \ref{lemma:gradient of J}, we know that 
\begin{align*}
    \nabla_K J(K,\tilde{K}) &= 2C_K \Sigma_{K,\tilde{K}} + 2d_{K,\tilde{K}}x^* \rho_{K,\tilde{K}}^\top,
\end{align*}
Hence,
\begin{align*}
    &\norm*{\nabla_K J(K',\tilde{K}') - \nabla_K J(K,\tilde{K})} \\
    &= \norm*{2C_{K'} \Sigma_{K',\tilde{K}'} + 2d_{K',\tilde{K}'}x^* \rho_{K',\tilde{K}'}^\top - \left(2C_K \Sigma_{K,\tilde{K}} + 2d_{K,\tilde{K}}x^* \rho_{K,\tilde{K}}^\top \right)} \\
    &\leq 2 \norm*{C_{K'} \Sigma_{K,\tilde{K}} + C_{K'} (\Sigma_{K',\tilde{K}'}-\Sigma_{K,\tilde{K}}) - C_K \Sigma_{K,\tilde{K}}} \\
    &\quad + 2 \norm*{d_{K',\tilde{K}'}x^* \rho_{K,\tilde{K}}^\top + d_{K',\tilde{K}'}x^* (\rho_{K',\tilde{K}'}^\top - \rho_{K,\tilde{K}}^\top) -d_{K,\tilde{K}}x^* \rho_{K,\tilde{K}}^\top } \\
    &\leq 2 \norm*{(C_{K'} - C_K)}\norm*{\Sigma_{K,\tilde{K}}} + 2 \norm*{C_{K'}} \norm*{\Sigma_{K',\tilde{K}'}-\Sigma_{K,\tilde{K}}} \\
    &\quad + 2 \norm*{d_{K',\tilde{K}'}x^* - d_{K,\tilde{K}}x^*} \norm*{\rho_{K,\tilde{K}}} + 2 \norm*{d_{K',\tilde{K}'}x^*} \norm*{\rho_{K',\tilde{K}'} - \rho_{K,\tilde{K}}}
\end{align*}
By the perturbation bounds in Lemma \ref{lemma:bound C_K - C_k' and d_K'g' - d_Kg} as well as Lemma \ref{lemma: bound Sigma_K'Ktilde' - Sigma_KKtilde} and Lemma \ref{lemma: bound rho_K'Ktilde' - rho_KKtilde}, as well as bounds on the norms of $\Sigma_{K,\tilde{K}}, \rho_{K,\tilde{K}}x^*, C_K, d_{K,\tilde{K}}x^*$ we derived in Lemma \ref{lemma:bound on Sigma_K,Ktilde}, Lemma \ref{lemma:bound on rho_KL}, and Lemma \ref{lemma:bound_C_K_d_KKtilde} respectively,  it follows that there exists $c'' > 0$ depending only on $(A,B,Q,R,\Sigma,\gamma,x^*,C)$ such that 
\begin{align*}
    \norm*{\nabla_K J(K',\tilde{K}') - \nabla_K J(K,\tilde{K})} \leq c'' \left(\norm*{K'-K} + \norm*{\tilde{K}' - \tilde{K}} \right).
\end{align*}

It remains to bound $\norm*{\nabla_{\tilde{K} } J(K',\tilde{K}') - \nabla_{\tilde{K} } J(K,\tilde{K})}$. To this end, recall $\nabla_{\tilde{K}}J$ which we computed in Lemma \ref{lemma:gradient of J}, giving us
\begin{align*}
    &\norm*{\nabla_{\tilde{K} } J(K',\tilde{K}') - \nabla_{\tilde{K} } J(K,\tilde{K})}\\
    &= \norm*{2(C_{K'} \rho_{K',\tilde{K}'}x^* + d_{K',\tilde{K}'}x^* (x^*)^\top) -2(C_{K} \rho_{K,\tilde{K}}x^* + d_{K,\tilde{K}}x^* (x^*)^\top)} \\
    &\leq 2 \norm*{C_{K'} \rho_{K,\tilde{K}}x^* + C_{K'} (\rho_{K',\tilde{K}'}-\rho_{K,\tilde{K}})x^* - C_K \rho_{K,\tilde{K}}x^*}  \\
    &\quad + 2 \norm*{d_{K',\tilde{K}'}x^* (x^*)^\top - d_{K,\tilde{K}}x^* (x^*)^\top}  \\
    &\leq 2 \norm*{(C_{K'} - C_K)}\norm*{\rho_{K,\tilde{K}}x^*} + 2 \norm*{C_{K'}} \norm*{\rho_{K',\tilde{K}'}x^*-\rho_{K,\tilde{K}}x^*} \\
    &\quad + 2 \norm*{d_{K',\tilde{K}'}x^* (x^*)^\top - d_{K,\tilde{K}}x^* (x^*)^\top} .
\end{align*}
By the perturbation bounds in Lemma \ref{lemma:bound C_K - C_k' and d_K'g' - d_Kg} and Lemma \ref{lemma: bound rho_K'Ktilde' - rho_KKtilde}, as well as bounds on the norms of $\rho_{K,\tilde{K}}, C_K, d_{K,\tilde{K}}x^*$ we derived in Lemma \ref{lemma:bound on rho_KL} and Lemma \ref{lemma:bound_C_K_d_KKtilde} respectively, it follows that there exists $c''' > 0$ depending only on $(A,B,Q,R,\Sigma,\gamma,x^*,C)$ such that 
\begin{align*}
    \norm*{\nabla_{\tilde{K} } J(K',\tilde{K}') - \nabla_{\tilde{K} } J(K,\tilde{K})} \leq c''' \left(\norm*{K'-K} + \norm*{\tilde{K}' - \tilde{K}} \right).
\end{align*}
The statement in the lemma then follows by picking a $L$ depending only on the constants $c,c', c'', c'''$.
\end{proof}

\subsubsection{Local Lipschitzness of $J$}
We now state and prove the local Lipschitz property of $J(K,\tilde{K})$. 
\begin{proposition}[Local Lipschitzness of $J(K,\tilde{K})$]
Suppose $$(K,\tilde{K}) \in \mathcal{G}_C, \quad \norm{K' - K} \leq \min\{c_4,c_5,1\}, \quad \norm{\tilde{K}' - \tilde{K}} \leq 1.$$ Then, there exists $\lambda > 0$ depending only on $(A,B,Q,R,\Sigma,\gamma,x^*,C)$ such that 
\begin{align*}
    \abs{J(K',\tilde{K}') - J(K,\tilde{K})} \leq \lambda \left(\norm*{K'-K} + \norm*{\tilde{K}' - \tilde{K}} \right).
\end{align*}
\end{proposition}
\begin{proof}
Recall from Line \eqref{eq:value_diff_eq_intermediate_for_smoothness} in our computations in the outline that
\begin{align*}
    &J(K',\tilde{K}') - J(K,\tilde{K}) \nonumber \\
    &= 2 \tr((K'-K)^\top (C_K \Sigma_{K',\tilde{K}'}  + d_{K,\tilde{K}}x^* \rho_{K',\tilde{K}'}^\top)) + 2\tr((\tilde{K}' - \tilde{K})^\top (C_K \rho_{K',\tilde{K}'} + d_{K,\tilde{K}}x^*)(x^*)^\top) \\
    &\quad + 2 \tr((K'-K)^\top (R + \gamma B^\top P_K B)(\tilde{K}' - \tilde{K})x^* \rho_{K',\tilde{K}'}^\top) \\
    &\quad + \tr(\Sigma_{K',\tilde{K}'}(K'-K)^\top (R + \gamma B^\top P_K B)(K'-K)) + \tr((\tilde{K}' - \tilde{K})^\top (R + \gamma B^\top P_K B)(\tilde{K}' - \tilde{K})x^*(x^*)^\top). 
\end{align*}
Since we have bounds on the terms
\begin{align*}
    C_K, \Sigma_{K, \tilde{K}}, d_{K,\tilde{K}}x^*, \rho_{K,\tilde{K}}x^*, P_K, \tilde{K}x^*, 
\end{align*}
from Lemmas \eqref{lemma:bound K and g}, \eqref{Lemma:bound Phi_k and P_K}, \eqref{lemma:bound on rho_KL},\eqref{lemma:bound on Sigma_K,Ktilde} , and \eqref{lemma:bound C_K - C_k' and d_K'g' - d_Kg}, by our choice of $K'$ and $\tilde{K}'$, applying the perturbation bounds in Lemmas \eqref{lemma: bound rho_K'Ktilde' - rho_KKtilde} and \eqref{lemma: bound Sigma_K'Ktilde' - Sigma_KKtilde}, we have bounds on 
\begin{align*}
    \Sigma_{K', \tilde{K}'}, \rho_{K,\tilde{K}}
\end{align*}
as well. Therefore, it follows that there exists $\lambda > 0$ depending only on $(A,B,Q,R,\Sigma,\gamma,x^*,C)$ such that
\begin{align*}
    \abs{J(K',\tilde{K}') - J(K,\tilde{K})} \leq \lambda \left(\norm*{K'-K} + \norm*{\tilde{K}' - \tilde{K}} \right).
\end{align*}
\end{proof}

Finally, we need to show a local Lipschitz property for the sample cost variant $J(K,\tilde{K}; x_0))$ starting from an initial state $x_0$. To facilitate technical analysis, we work with the assumption $\norm{x_0} \leq C_n$ for some $C_n > 0$ (cf. \cite{malik2019derivative}).
\begin{proposition}[Local Lipschitzness of sample cost $J(K,\tilde{K};x_0)$]
\label{proposition: sample cost locally Lipschitz}
Suppose $$(K,\tilde{K}) \in \mathcal{G}_C, \quad \norm{K' - K} \leq \min\{c_4,c_5,1\}, \quad \norm{\tilde{K}' - \tilde{K}} \leq 1, \quad \norm{x_0} \leq C_n.$$ Then, there exists $\lambda > 0$ depending only on $(A,B,Q,R,\Sigma,\gamma,x^*,C,C_n)$ such that 
\begin{align*}
    \abs{J(K',\tilde{K}';x_0) - J(K,\tilde{K};x_0)} \leq \lambda \left(\norm*{K'-K} + \norm*{\tilde{K}' - \tilde{K}} \right).
\end{align*}
\end{proposition}
\begin{proof}
We note that by Lemma \ref{lemma:value function of J(K,g)}, recalling that $g := \tilde{K}x^*$,
\begin{align*}
    J(K,\tilde{K};x_0) = x_0^\top P_Kx_0 + x_0^\top q_{K,g} + r_{K,g}.
\end{align*}
Since we already have perturbation bounds on $\norm{P_K - P_{K'}}$ and $q_{K,g}$ from Lemma \ref{Lemma:bound P_K' - P_K and f(K') - f(K)} and \ref{lemma:q_K'g' - q_Kg} respectively for $K'$ and $\tilde{K}'$ selected as in the assumptions, it remains for us to bound $r_{K',g'} - r_{K',g'}$. From Lemma \ref{lemma:value function of J(K,g)}, we have that
\begin{align*}
    r_{K,g} = \frac{1}{1-\gamma} \left((x^*)^\top Qx^* + g^\top Rg + \gamma \left(g^\top B^\top P_K Bg + 2g^\top B^\top q_{K,g} \right) \right).
\end{align*}
By the perturbation techniques used earlier, it is clear from the form of $r_{K,g}$ that there exists $c > 0$ depending only on $(A,B,Q,R,\Sigma,\gamma,x^*,C)$ such that 
\begin{align*}
    \abs{r_{K',g'} - r_{K,g}} \leq c (\norm{K'-K} + \norm{g'-g}).
\end{align*}
Therefore, since we also assumed an uniform bound $C_n$ on $\norm{x_0}$, there exists a $\lambda > 0$ depending only on $(A,B,Q,R,\Sigma,\gamma,x^*,C,C_n)$ such that 
\begin{align*}
    \abs{J(K',\tilde{K}';x_0) - J(K,\tilde{K};x_0)} \leq \lambda  (\norm{K'-K} + \norm{g'-g}).
\end{align*}
\end{proof}

\section{Analysis of main results}

\subsection{Results from zeroth-order optimization}

The following results from zeroth-order optimization are useful. The two-point estimators we use in the algorithm can be defined as follows:
\begin{align*}
    & z_{r}((K,\tilde{K}),x_0^i,\delta^i) := \frac{d}{2r}\left(J((K,\tilde{K}) + r\delta^i; x_0^i) - J(((K,\tilde{K}) - r\delta^i; (x_0)^i)\right)\delta^i,
\end{align*}
where $i \in [m]$ denotes the indexing in the mini-batch, and $d = 2nk$ (i.e. dimension of $K$ plus dimension of $\tilde{K}$). When the context is clear, we may some times omit the index $i$. 
An essential quantity is the following smoothed version of $J$:
\begin{align*}
    J_r(K,\tilde{K}) = \bbE_{\bar{\delta} \sim \bbB^{d}}\left[J(K,\tilde{K}) + r\bar{\delta} \right].
\end{align*}

This next lemma shows that the zeroth-order estimator $z_{r}$ is an unbiased estimate of $\nabla J_r(K,\tilde{K})$, which in turn is close to $\nabla J(K,\tilde{K})$ when $r$ is small. 

\begin{proposition}[Properties of two-point estimator]
\label{proposition: properties of two-point estimator}
Suppose that $(K,\tilde{K}) \in \mathcal{G}_{10J_0}$. Suppose $r \leq \rho,$ the local radius of smoothness defined earlier.
Then, the following are true:
\begin{enumerate}[(i)]
    \item The two-point estimator of $J$, $z_{r}(\cdot)$ is an unbiased estimator of $\nabla J_r(\cdot)$.
\begin{align*}
    \mathbb{E}[z_{r}((K,\tilde{K}),x_0,\delta)] =
    \nabla J_r(K,\tilde{K}).
\end{align*}
\item The gradient of the smoothed version of $J$ is close to the gradient of $J$.
\[\norm{\nabla J_r(K, \tilde{K}) - \nabla J (K,\tilde{K})} \leq rL.\]
\item There exists a maximum bound on the size of each estimate:
\begin{align*}
    \sup_{\substack{\hat{K} \in \mathcal{G}_{10J_0} \\ x_0,\delta}} \left\{\norm*{z_r(\hat{K}; x_0,\delta)}\right\} \leq d\lambda,
\end{align*}
where $\lambda$ is the local Lipschitz parameter defined earlier. 
\item Finally, the variance of the estimator is also bounded.
\begin{align*}
    \sup_{\hat{K} \in \mathcal{G}_{10J_0}}\left\{\!\bbE\! \left[\norm*{z_r(\hat{K};x_0,\delta) - \bbE \left[z_r(\hat{K};x_0,\delta)  \mid \hat{K} \right] }^2\right]\right\} \leq d \lambda^2 
\end{align*}
\end{enumerate}

Therefore, for the two-point estimator, we can set
\[Z_{\infty} := d \lambda, \quad Z_2 := d \lambda^2 \]
\end{proposition}
\begin{proof}
The first result is a consequence of Lemma 2.1 in \cite{flaxman2004online}, and the symmetry of the distribution of $u$, which we assume to be $\bbS^{d-1}$. Result (ii) follows directly from Lemma 6(b) in \cite{malik2019derivative}. Results (iii), (iv) are consequences of the argument in Section 4.3 of \cite{malik2019derivative}. We note in particular that result (iv), which utilizes the argument made in \cite{shamir2017optimal} to bound the variance of the symmetric two-point estimator, requires the fact that the sample cost is also Lipschitz (uniformly over a local radius). 
\end{proof}



\subsection{Proof of Lemma \ref{lemma: per-iteration-change-in-optimality gap-first-time}: reduction in optimality gap}
\label{appendix:optimality-gap}

Recall the definition
\begin{align*}
    \Delta_t := J(K_t,\tilde{K}_t) - J^{\mathrm{opt}},
\end{align*}
and the statement of Lemma \ref{lemma: per-iteration-change-in-optimality gap-first-time}.
\perIterOptGap*
\begin{proof}
By choosing $\eta \leq \frac{\rho}{Z_{\infty}}$, we find that
\begin{align*}
    \norm*{(K_{t+1}, \tilde{K}_{t+1}) - (K_t,\tilde{K}_t)}^2 &=
    \norm*{(K_t,\tilde{K}_t) - \eta (z_t) (K_t, \tilde{K}_t) - (K_t, \tilde{K}_t)}^2 \\
    &\leq \eta^2 (Z_{\infty}^2) \leq \rho^2.
\end{align*}
Since $(K_t,\tilde{K}_t) \in \mathcal{G}_{10J_0}$, by local smoothness, and taking conditional expectation on $\mathcal{F}_t$ (which we denote by $\mathbb{E}^t$), it follows that
\begin{align*}
    \mathbb{E}^t \left[J(K_{t+1}, \tilde{K}_{t+1})\right] &\labelrel\leq{eq:improvement_smoothness} J(K_t,\tilde{K}_t) - \eta \mathbb{E}^t \brac*{\nabla J(K_t, \tilde{K}_t), \frac{1}{m} \sum_{i=1}^m (z_t^i)(K_t, \tilde{K}_t)} + \frac{L}{2} \mathbb{E}^t \norm*{\eta \frac{1}{m} \sum_{i=1}^m (z_t^i)(K_t, \tilde{K}_t)}^2 \\
    &\labelrel={eq:near_unbiased_ZO_estimate} J(K_t,\tilde{K}_t) - \eta \brac*{\nabla J(K_t, \tilde{K}_t), \nabla J_r(K_t, \tilde{K}_t)} + \frac{L\eta^2}{2m^2} \mathbb{E}^t  \norm*{\sum_{i=1}^m (z_t^i)(K_t, \tilde{K}_t)}^2.
\end{align*}
Above, \eqref{eq:improvement_smoothness} is a consequence of the smoothness result in Proposition \ref{proposition: local smoothness}, while \eqref{eq:near_unbiased_ZO_estimate} is a consequence of Proposition \ref{proposition: properties of two-point estimator} (i). 

We first handle the term $- \eta \brac*{\nabla J(K_t, \tilde{K}_t), \nabla J_r(K_t, \tilde{K}_t)}$.
Observe that
\begin{align}
    & -\eta \brac*{\nabla J(K_t, \tilde{K}_t),\nabla J_r(K_t, \tilde{K}_t)} \nonumber \\
    &= -\eta \brac*{\nabla J(K_t, \tilde{K}_t), \nabla J(K_t, \tilde{K}_t)} - \eta \brac*{\nabla J(K_t, \tilde{K}_t),  \nabla J_r(K_t, \tilde{K}_t) - \nabla J(K_t, \tilde{K}_t)} \nonumber \\
    &\labelrel={eq:2ab <= a^2+b^2} - \eta \norm*{\nabla J (K_t, \tilde{K}_t)}^2 + \eta\ \left( \frac{\norm*{\nabla J(K_t, \tilde{K}_t)}^2}{2} + \frac{\norm*{\nabla J_r(K_t, \tilde{K}_t) - \nabla J(K_t, \tilde{K}_t)}^2}{2}\right) \nonumber \\
    &\leq - \frac{\eta}{2} \norm*{\nabla J (K_t, \tilde{K}_t)}^2 + \frac{\eta}{2} \norm*{\nabla J_r(K_t, \tilde{K}_t) - \nabla J(K_t, \tilde{K}_t)}^2 \nonumber \\
    &\labelrel\leq{eq:less_than_rL} - \frac{\eta}{2} \norm*{\nabla J (K_t, \tilde{K}_t)}^2 + \frac{\eta}{2} L^2 r^2 \label{line:grad_J grad_Jr inner product}
\end{align}
Above, \eqref{eq:2ab <= a^2+b^2} is a consequence of the fact that for any vectors $a,b$, $2\brac{a,b} \leq \norm{a}^2 + {b}^2$. Meanwhile, to obtain \eqref{eq:less_than_rL}, we used Proposition \ref{proposition: properties of two-point estimator} (ii), utilizing our choice of $r \leq \rho$.

We turn our attention now to $\mathbb{E}^t \norm*{\frac{1}{m} \sum_{i=1}^m z_t^i(K_t,\tilde{K}_t)}^2$. For notational convenience we use $\var(\cdot)$ to denote $\mathbb{E}[\norm*{\cdot - \bbE[\cdot]}^2]$. Then, 
\begin{align}
  &\mathbb{E}^t \norm*{\frac{1}{m} \sum_{i=1}^m (z_t^i) (K_t,\tilde{K}_t)}^2 \nonumber \\
  &\labelrel={eq:u_i indep}  \frac{1}{m^2} \sum_{i=1}^m \var((z_t^i)) + \norm*{\nabla J_r (K_t, \tilde{K}_t)}^2 \nonumber \\
  &\labelrel={eq:(a+b)^2 <= 2a^2 + 2b^2} \frac{1}{m^2} \sum_{i=1}^m \var((z_t^i)) + 2 \norm*{\nabla J(K_t, \tilde{K}_t)}^2  + 2\norm*{\left(\nabla J_r (K_t,\tilde{K}_t) - \nabla J(K_t, \tilde{K}_t)\right)}^2 \nonumber \\
  &\labelrel\leq{eq:variance_last_line} \frac{1}{m} Z_{2} + 2 \norm*{\nabla J(K_t,\tilde{K}_t)}^2 + 2L^2r^2. \label{line:avg_variance_bound}
\end{align}
For \eqref{eq:u_i indep}, we utilized independence of the random perturbation $\{u_t^i\}$ across $i \in [m]$. For \eqref{eq:(a+b)^2 <= 2a^2 + 2b^2}, we used the fact that for any vectors $a,b$, $\norm{a}^2 \leq 2\norm{a-b}^2 +  2\norm{b}^2$.

For \eqref{eq:variance_last_line}, denoting $\hat{K} := (K,\tilde{K})$, we utilized the definition 
$$Z_{2} := \!\sup_{\hat{K} \in \mathcal{G}_{10J_0}}\left\{\!\bbE\! \left[\norm*{z_r(\hat{K};x_0,\delta) - \bbE \left[z_r(\hat{K};x_0,\delta)  \mid \hat{K} \right] }^2\right]\right\}\!,$$ 
as well as Proposition \ref{proposition: properties of two-point estimator}(ii).

Putting everything together, we find that
\begin{align*}
    &\mathbb{E}^t \left[J(K_{t+1}, \tilde{K}_{t+1})\right] - J^{\mathrm{opt}} \\
    &\labelrel\leq{eq:improvement_first_part_bound} J(K_t,\tilde{K}_t) - \frac{\eta}{2} \norm*{\nabla J (K_t, \tilde{K}_t)}^2 + \frac{\eta}{2} L^2 r^2  + \frac{L\eta^2}{2}\mathbb{E}^t\left( \norm*{\frac{1}{m} \sum_{i=1}^m (z_t^i)(K_t,\tilde{K}_t)}^2 \right) - J^{\mathrm{opt}}\\
    &\labelrel\leq{eq:improvement_second_part_bound} \Delta_t - \frac{\eta}{2} \norm*{\nabla J (K_t, \tilde{K}_t)}^2 + \frac{\eta}{2} L^2 r^2 + \frac{L\eta^2}{2} \left(\frac{Z_{2}}{m} + 2 \norm*{\nabla J(K_t,\tilde{K}_t)}^2 +  2L^2r^2 \right) \\
    &\leq \Delta_t - \left(\frac{\eta}{2} - L\eta^2 \right) \norm*{\nabla J(K_t,\tilde{K}_t)}^2 + (L^2\eta + L^3\eta^2 )r^2 + \frac{L\eta^2}{2m}(Z_2) \\
    &\labelrel\leq{improvement_final_PL} \left(1 - \frac{\eta\mu}{4} \right)\Delta_t + 2\eta L^2 r^2 +  \frac{L\eta^2}{2m}(Z_2).
\end{align*}
Above, \eqref{eq:improvement_first_part_bound} holds by the bound on Line \ref{line:grad_J grad_Jr inner product}, while \eqref{eq:improvement_first_part_bound} holds by the bound on Line \ref{line:avg_variance_bound}. Finally, \eqref{improvement_final_PL} holds by picking $\eta L \leq \frac{1}{4}$, so that $\frac{\eta}{2} - \eta^2 L \geq \frac{\eta}{4}$, as well as gradient dominance in Proposition \ref{proposition: gradient dominant single agent}.

Continuing, using the choice 
\begin{align*}
    r \leq \sqrt{\frac{\ep\mu}{240L^2}},
\end{align*}
we get that
\begin{align*}
    \mathbb{E}^t [\Delta_{t+1}] \leq \left(1 - \frac{\eta\mu}{4}\right) \Delta_t + \frac{\eta^2L}{2}\left(\frac{Z_2}{m} \right) + \frac{\eta\mu}{120}\ep.
\end{align*}


\end{proof}

\subsection{Corollary 1: sample complexity}
\label{appendix:sample-complexity}

\begin{proof}[Proof of Corollary \ref{corollary:sample_complexity_2pt}]
From Theorem \ref{theorem:lqr_tracking_algorithm_convergence}, by choosing 
\begin{align*}
    &\eta \leq \min \left\{\frac{m \ep \mu }{240L \left(Z_2 \right)}, \frac{1}{4L},  \frac{\rho}{Z_{\infty}}\right\}, r \leq \min \left\{ \sqrt{\frac{\ep \mu}{240 L^2}}, \rho \right\},
\end{align*}
we know that it takes 
$$ T = \frac{4}{\mu \eta} \log(120\Delta_0/\ep)$$
steps so that the final optimality gap satisfies $\Delta_T \leq \ep$. Assuming that the iterates of the algorithm stay in $\mathcal{G}_{10J_0}$, which we know by Proposition \ref{proposition:stability-first-time} happens with probability at least 0.8, by Proposition \ref{proposition: properties of two-point estimator} (ii) and (iii) respectively, we know that 
\small
\begin{align*}
    Z_{\infty}\! :=\! \sup_{\substack{\hat{K} \in \mathcal{G}_{10J_0} \\ x_0,\delta}} \left\{\norm*{z_r(\hat{K}; x_0,\delta)}\right\} \leq d\lambda , \ \ Z_{2}\! := \!\sup_{\hat{K} \in \mathcal{G}_{10J_0}}\left\{\!\bbE\! \left[\norm*{z_r(\hat{K};x_0,\delta) - \bbE \left[z_r(\hat{K};x_0,\delta)  \mid \hat{K} \right] }^2\right]\right\} \leq d\lambda^2
\end{align*}
\normalsize
both hold, where we recall $\lambda >0$ is the (uniform) local Lipschitz parameter over $\mathcal{G}_{10J_0}$, $d = 2nk$ is the dimension of the optimization problem, and for notational convenience we denoted $\hat{K} = (K,\tilde{K})$. Plugging these upper bounds into our choice of $\eta$, it follows that it takes 
\begin{align*}
    T \geq \max\left\{\frac{960L d\lambda^2}{m\ep},  \frac{4L}{\mu}, \frac{4d\lambda}{\mu\rho}  \right\} \log(120\Delta_0/\ep)
\end{align*}
steps before we know that with probability at least 0.75, there exists $\Delta_T$ such that $\Delta_T \leq \ep.$
\end{proof}



\end{appendices}
\end{document}